\documentclass[11pt]{article}
\usepackage[left=1.25in, right=1.25in, top=1.25in, bottom=1.25in]{geometry}
\usepackage{amsmath, amsfonts,amssymb, amsthm}
\usepackage{graphicx}
\usepackage{caption}
\usepackage{subcaption}
\usepackage{color}
\usepackage{epsf}
\usepackage{tikz}
\usepackage[normalem]{ulem} 
\usepackage{hyperref}

\hypersetup{%final=true,%
            breaklinks=true,%
            pdfstartpage=1,%
            %letterpaper=true,%
			%bookmarks=true,%
            %bookmarksnumbered=false,%
			plainpages=false,%
			naturalnames=true,%
%			pagebackref,% backref,%
%            colorlinks=false,%
%			linkcolor=blue,%
linkbordercolor=blue,
               urlcolor=blue,%
			citecolor=blue,%
			pdftitle={Project Description},%
			pdfauthor={Andrzej Dudek}}

\setcounter{MaxMatrixCols}{40}

\newtheorem{theorem}{Theorem}[section]
\newtheorem{corollary}[theorem]{Corollary}
\newtheorem{lemma}[theorem]{Lemma}

\newtheorem{observation}[theorem]{Observation}

\newcommand{\ep}{\epsilon}

\newcommand{\gnp}{\mathcal{G}(n,p)}
\newcommand{\hnp}{\mathcal{H}^{(k)}(n,p)}

\newcommand{\mc}{\mathrm{mc}}

\newcommand{\rbrac}[1]{\left( #1 \right)}

\newcommand{\cbrac}[1]{\left\{ #1 \right\}}

\title{Large monochromatic components and long monochromatic cycles in random hypergraphs}

\author{Patrick Bennett\thanks{Department of Mathematics, Western Michigan University, Kalamazoo, MI. Supported in part by Simons Foundation Grant \#426894.}
\and Louis DeBiasio\thanks{Department of Mathematics, Miami University, Oxford, OH. Supported in part by Simons Foundation Grant \#283194.}
\and Andrzej Dudek\thanks{Department of Mathematics, Western Michigan University, Kalamazoo, MI. Supported in part by Simons Foundation Grant \#522400.}
\and Sean English\thanks{Department of Mathematics, Western Michigan University, Kalamazoo, MI.}}

\date{\today}

\begin{document}
\maketitle

\begin{abstract}
We extend results of Gy\'arf\'as and F\"uredi on the largest monochromatic component in $r$-colored complete $k$-uniform hypergraphs to the setting of random hypergraphs.  We also study long monochromatic loose cycles in $r$-colored random hypergraphs.  In particular, we obtain a random analog of a result of Gy\'arf\'as, S\'ark\"ozy, and Szemer\'edi on the longest monochromatic loose cycle in $2$-colored complete $k$-uniform hypergraphs.
\end{abstract}

\section{Introduction}

It is known, due to Gy{\'a}rf{\'a}s~\cite{G77}, that for any $r$-coloring of the edges of $K_n$, there is a monochromatic component of order at least $n/(r-1)$ and this is tight when $r-1$ is a prime power and $n$ is divisible by $(r-1)^2$.  Later, F\"uredi~\cite{F81} introduced a more general method which implies the result of Gy\'arf\'as.  More recently, Mubayi \cite{Mub2002} and independently, Liu, Morris, and Prince \cite{LMP09}, gave a simple proof of a stronger result which says that in any $r$-coloring of the edges of $K_n$, there is either a monochromatic component on $n$ vertices or a monochromatic double star of order at least $n/(r-1)$.  Recently, Bal and DeBiasio~\cite{BD2017} and independently, Dudek and Pra\l{}at~\cite{DP2017}, showed that the Erd\H{o}s-R\'enyi random graph behaves very similarly with respect to the size of the largest monochromatic component. 

Recall that the \emph{random graph} $\gnp$ is the random graph $G$ with vertex set $[n]$ in which every pair $\{i,j\} \in \binom{[n]}{2}$ appears independently as an edge in $G$ with probability~$p$. An event in a probability space holds \emph{asymptotically almost surely} (or \emph{a.a.s.}) if the probability that it holds tends to $1$ as $n$ goes to infinity.  More precisely, it was shown in~\cite{BD2017} and \cite{DP2017} that that a.a.s.\ for any $r$-coloring of the edges of $\gnp$, there is a monochromatic component of order $(\frac{1}{r-1}-o(1))n$, provided that $pn \to \infty$ (that means the average degree tends to infinity). As before, this result is clearly best possible.

In this paper we study a generalization of these results to $k$-uniform hypergraphs (each edge has order~$k$).  As in the $k=2$ case, our results hold even for very sparse random hypergraphs; that is, we only assume that the average degree, $pn^{k-1}$, tends to infinity together with~$n$. 

\subsection{Large components}

We say that a hypergraph $H=(V,E)$ is \emph{connected} if the \emph{shadow graph} of $H$ (that is, the graph with vertex set $V(H)$ and edge set $\{\{x,y\}: \{x,y\}\subseteq e \text{ for some } e\in E(H)\}$) is connected.  A \emph{component} of a hypergraph is a maximal connected subgraph.  Let $r$ be a positive integer and $H$ be a hypergraph. Let $\chi_r:E(H) \to [r]$ be a coloring of the edges of~$H$. Denote by $\mc(H,\chi_r)$ the order of the largest monochromatic component under $\chi$ and let
\[
\mc_r(H) = \min_{\chi_r} \mc(H,\chi_r).
\] 

For hypergraphs much less is known; however, Gy\'arf\'as~\cite{G77} (see also F\"uredi and Gy\'arf\'as \cite{FG1991}) proved the following result. Let $K^k_n$ denote the complete $k$-uniform hypergraph of order~$n$.

\begin{theorem}[Gy\'arf\'as \cite{G77}]\label{gthm1}
For all $n\geq k\geq 3$, 
\begin{enumerate}
\item $\mc_{k}(K^k_n)=n$, and 
\item  $\mc_{k+1}(K^k_n)\geq \frac{k}{k+1}n$. Furthermore, this is optimal when $n$ is divisible by $k+1$. 
\end{enumerate}
\end{theorem}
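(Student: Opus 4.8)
The plan is to prove both parts through a common ``partition'' mechanism. If $C$ is the vertex set of a monochromatic component of color $i$ in an edge-coloring of $K^k_n$, then every color-$i$ edge induces a clique inside a single component of the color-$i$ shadow, so no color-$i$ edge meets both $C$ and $V\setminus C$; dually, an edge $e$ of color $j$ is contained in one part of \emph{every} partition of $V$ obtained this way from a color-$j$ component.

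For part (i), the bound $\mc_k(K^k_n)\le n$ is trivial, so assume for contradiction that no color in $[k]$ has a component on all $n$ vertices. For each $i\in[k]$ the color-$i$ shadow is then disconnected, giving a nontrivial partition $V=A_i\sqcup B_i$ that no color-$i$ edge crosses. I claim there is a $k$-set $S$ meeting both $A_i$ and $B_i$ for every $i$: then, writing $j=\chi_k(S)$, the edge $S$ crosses partition $(A_j,B_j)$, contradicting the choice of that partition. For $n=k$ take $S=V$. For $n\ge k+1$, build $S$ greedily --- start with one vertex from each of $A_1,B_1$, then for $i=2,\dots,k$ add one new vertex from the side of $(A_i,B_i)$ not yet met by $S$ whenever $S$ still lies within a single part of that partition. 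This produces $S$ with $|S|\le k+1$ meeting both sides of all $k$ partitions; if $|S|=k+1$, note that each partition has at most one side meeting $S$ in exactly one vertex, so at most $k$ vertices of $S$ are ``critical'', and deleting a non-critical vertex leaves a valid $S$ of size $k$ (and if the greedy step produced fewer than $k$ vertices, pad $S$ arbitrarily up to size $k$).

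For the optimality in part (ii), assume $(k+1)\mid n$, partition $V=V_0\sqcup\cdots\sqcup V_k$ into blocks of size $n/(k+1)$, and color each $k$-set $e$ by the least $i$ with $e\cap V_i=\emptyset$ --- such $i$ exists since $|e|=k<k+1$. Every color-$i$ edge avoids $V_i$, so each color-$i$ component lies in $V\setminus V_i$; a routine check shows the color-$i$ shadow is connected and spans $V\setminus V_i$, so every monochromatic component has order exactly $\tfrac{k}{k+1}n$ and hence $\mc_{k+1}(K^k_n)\le\tfrac{k}{k+1}n$.

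For the lower bound in part (ii), suppose $\chi_{k+1}$ has all monochromatic components of order $<\tfrac{k}{k+1}n$. Taking one partition per color, the monochromatic components form a family $\mathcal F$ which is a union of $k+1$ partitions of $V$, has every part of size $<\tfrac{k}{k+1}n$, and is such that every $k$-set, being an edge, lies inside some member of $\mathcal F$. Thus it suffices to prove the purely combinatorial statement: \emph{if $P_1,\dots,P_{k+1}$ are partitions of $[n]$ all of whose parts have size $<\tfrac{k}{k+1}n$, then some $k$-set is contained in no part of any $P_i$} --- coloring such a $k$-set gives the desired contradiction, exactly as in part (i). I would prove this statement by induction on $n$ (the base case $n=k$ being immediate, since $[k]$ must then lie in some part), deleting a vertex $v$ and using the $k$-sets through $v$ to make up for the loss a plain induction incurs; this is in the spirit of the covering-design method of F\"uredi and Gy\'arf\'as~\cite{FG1991}. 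The delicate point, and what I expect to be the main obstacle, is the rounding: $\tfrac{k}{k+1}n$ need not be an integer, so the induction has to be set up so as not to lose a constant, and a naive union bound over the $k+1$ partitions is provably too weak (it fails already for $k=3$ once $n$ is large), so one must genuinely use that the large parts of \emph{distinct} colors cannot all occur simultaneously. This is also why the hypothesis $k\ge3$ is needed; for $k=2$ the analogous statement is false, matching the strictly smaller bound $n/2$ in the graph case.
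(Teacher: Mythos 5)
The paper only supplies a proof of the optimality construction in part (ii) (the same balanced $(k+1)$-way partition of $V$ you describe) and attributes both inequalities to Gy\'arf\'as~\cite{G77}, so there is no in-paper proof of either inequality to compare against. Your proof of part (i) is correct and self-contained: the greedy construction of a $k$-set $S$ crossing all $k$ bipartitions, followed by the observation that when $|S|=k+1\ge 4$ each partition has at most one vertex of $S$ whose removal would destroy the crossing (hence at most $k$ ``critical'' vertices and at least one deletable vertex), is sound, and you handle the $n=k$ and the padding case $|S|<k$ correctly.

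The lower bound $\mc_{k+1}(K^k_n)\ge\frac{k}{k+1}n$, however, is not actually proved. You correctly reduce it to the covering statement that if $P_1,\dots,P_{k+1}$ are partitions of $[n]$ with every part of size $<\frac{k}{k+1}n$ then some $k$-set lies in no part of any $P_i$, and you correctly observe that a naive union bound over the $k+1$ partitions does not close (the relevant sum already exceeds $\binom{n}{k}$ at $k=3$). But you then only name an intended induction and list its obstacles without resolving them; since that covering statement is the entire analytic content of part (ii), the submission has a genuine gap there, not merely an omitted routine verification. For a self-contained proof, the standard route is F\"uredi's fractional transversal method (the machinery behind the F\"uredi--Gy\'arf\'as theorem the paper cites in its conclusion), which is designed precisely to avoid the rounding losses that defeat the direct induction you sketch.
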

\noindent
The optimality statement is easy to see. Indeed, split the vertex set into $k+1$ parts $V_1, \dots, V_{k+1}$ each of size $n/(k+1)$ and color the edges so that color $i$ is not used on any edge which intersects $V_i$. Clearly each monochromatic component has size $kn/(k+1)$.

F\"uredi and  Gy\'arf\'as \cite{FG1991} and Gy\'arf\'as and Haxell \cite{GH2009} proved a number of other results regarding the value of $\mc_{r}(K^k_n)$; however, in general, the value of $\mc_{r}(K^k_n)$ is not known (see Section \ref{sec:conc} for more details).

Our main theorem shows that in order to prove a random analog of any such result about $\mc_{r}(K^k_n)$ it suffices to prove a nearly complete (or large minimum degree) analog of such a result. 
Let the \emph{random hypergraph} $\hnp$ be the $k$-uniform hypergraph $H$ with vertex set $[n]$ in which every $k$-element set from $\binom{[n]}{k}$ appears independently as an edge in~$H$ with probability~$p$. 

\begin{theorem}\label{thm:main1}
Suppose a function $\varphi = \varphi(r, k)$ satisfies the following condition: for all $k\geq 2$, $r\geq 1$, and $\alpha^*> 0$ there exists $\ep^*>0$ and $t_0>0$ such that if $G$ is a $k$-uniform hypergraph on $t\geq t_0$ vertices with $e(G)\geq (1-\ep^*)\binom{t}{k}$, then $\mc_r(G)\geq (\varphi-\alpha^*) t$.

Then for any $k\geq 2$, $r\geq 1$, $\alpha>0$, and $p=p(n)$ such that $pn^{k-1}\to\infty$ we have that a.a.s. $\mc_{r}\left(\hnp\right)\geq (\varphi-\alpha) n$.
\end{theorem}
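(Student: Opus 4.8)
The plan is to transfer the dense hypothesis through a constant‑size \emph{reduced} hypergraph obtained from a random partition of $[n]$. Fix $k\ge 2$, $r\ge 1$, $\alpha>0$ as in the statement, apply the hypothesis with $\alpha^\ast:=\alpha/4$ to obtain $\ep^\ast>0$ and $t_0$, and choose a large constant $M$ --- a multiple of $t_0$, large in terms of $1/\ep^\ast$, $r$, $k$ (how large is dictated by the reduction below). Partition $[n]$ into clusters $V_1,\dots,V_M$ of size $N:=\lfloor n/M\rfloor$, discarding the $\le M-1$ leftover vertices, which changes the conclusion by only $o(n)$. Since $\hnp$ is vertex‑symmetric, this fixed partition behaves like a uniform equipartition, so I may freely union‑bound over properties of it.

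\textbf{Probabilistic input.} Put $\beta:=\ep^\ast/(10r)$. First I would show that a.a.s.\ $\hnp$ has the following \emph{connectivity} property: for every $k$‑set of distinct indices $i_1,\dots,i_k\in[M]$ and all $W_j\subseteq V_{i_j}$ with $|W_j|\ge\beta N$, some edge of $\hnp$ meets each $W_j$ in exactly one vertex. The expected number of such transversal edges is $p(\beta N)^k=\Theta\!\big(\beta^k(pn^{k-1})\,n\big)$, which grows much faster than the $kN$ needed for a Chernoff bound to beat the $2^{kN}\binom Mk$ choices of $(W_1,\dots,W_k)$; only $pn^{k-1}\to\infty$, $N=\Theta(n)$, and $\beta,k$ constant are used. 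A routine first‑moment bound also gives a.a.s.\ that, for every such $k$‑set, all but $o(N)$ vertices of each $V_{i_j}$ have crossing degree into $\{V_{i_1},\dots,V_{i_k}\}$ in $[\tfrac12 pN^{k-1},2pN^{k-1}]$. These are the only places randomness enters; everything below is deterministic and must hold for \emph{every} colouring $\chi$.

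\textbf{Reduction and conclusion.} Fix an arbitrary $r$‑colouring $\chi$ of $\hnp$. The plan is to build a $k$‑uniform hypergraph $G$ on a bounded vertex set $W$ obtained from $[M]$ (likely a refinement of $[M]$ into sub‑clusters), with $|W|\ge t_0$, together with an $r$‑colouring $\psi$ of $E(G)$, so that: (i) $G$ is \emph{almost complete}, $e(G)\ge(1-\ep^\ast)\binom{|W|}{k}$, the missing edges forming a bounded ``bad'' set that must be shown small for every $\chi$; and (ii) $(G,\psi)$ \emph{lifts}, i.e.\ a $\psi$‑monochromatic component of relative size $\gamma$ in $G$ yields a $\chi$‑monochromatic component of relative size at least $\gamma-\alpha/4$ in $\hnp$. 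A $k$‑set $I$ of (sub‑)clusters should be coloured $\psi(I)=c$ when colour $c$ possesses a single component of $\hnp$ that \emph{dominates} each cluster of $I$ (captures essentially all of the vertices of that cluster which lie in any colour‑$c$ edge); the connectivity property of Step~1 then forces this for all but boundedly many $I$, since a non‑negligible monochromatic family of crossing edges on a bounded set of clusters cannot shatter into more than boundedly many small components on those clusters, while (ii) follows because dominant components attached to incident edges of the $\psi$‑monochromatic structure overlap heavily inside a common cluster and hence coincide in $\hnp$, so they glue along the whole connected structure with at most an $\alpha/4$ loss. Granting (i) and (ii), the dense hypothesis applied to $(G,\psi)$ gives a $\psi$‑monochromatic component of relative size $\ge\varphi-\alpha^\ast=\varphi-\alpha/4$, which lifts to a $\chi$‑monochromatic component of $\hnp$ of relative size $\ge\varphi-\alpha/2\ge\varphi-\alpha$; since $\chi$ was arbitrary, a.a.s.\ $\mc_r(\hnp)\ge(\varphi-\alpha)n$.

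\textbf{Where the difficulty lies.} Step~1 is standard and property (ii) is largely bookkeeping once the reduction is set up correctly; the crux is (i) --- bounding the number of bad $k$‑sets of clusters \emph{uniformly over all $r$‑colourings $\chi$}, since there is no union bound over colourings, so the bound must be deduced structurally from the connectivity of $\hnp$ together with a counting argument. I expect the right device is to subdivide each cluster into a bounded number $s$ of equal sub‑clusters, let $\psi$ assign to each $k$‑set of sub‑clusters a \emph{majority} colour of its crossing edges, and apply a Gy\'arf\'as‑type statement for complete $M$‑partite $k$‑uniform hypergraphs to this colouring: the sub‑cluster refinement absorbs the possibility that a colour class splits into several large components inside a single cluster, and it makes $G$ genuinely $(1-\ep^\ast)$‑complete once $M$ (hence the proportion of ``multipartite'' $k$‑sets) is large. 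Calibrating $s$, $M$, and the various thresholds so that simultaneously $G$ is $(1-\ep^\ast)$‑complete, the majority colours do lift, and the lift loses at most an $\alpha/4$‑fraction of the vertices, is where the real effort goes.
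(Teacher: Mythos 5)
Your approach differs from the paper's. The paper applies the sparse weak hypergraph regularity lemma (Theorem~\ref{regularity}) \emph{simultaneously to all $r$ colour classes} $H_1,\dots,H_r$, obtaining a partition in which all but an $\ep$-fraction of $k$-tuples of clusters are $(\ep,p)$-regular in every colour. For such a $k$-tuple the majority colour is both dense and $(\ep,p)$-regular, so every choice of subsets of relative size $\ge\ep$ in the parts spans an edge \emph{of that colour}, and Lemma~\ref{lem:edgetocomp} then turns each majority-coloured regular $k$-tuple into a single monochromatic component covering a $(1-\ep)$-fraction of each cluster; overlapping red edges of the cluster graph glue because each captures more than half of a shared cluster. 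You instead take an arbitrary equipartition into $M$ constant-size clusters and only establish a transversal-edge property for $\hnp$ itself, avoiding the regularity lemma.

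That is where the gap is. Your connectivity property (every $k$-tuple of $\beta N$-subsets of distinct clusters spans an edge of $\hnp$) controls the union of all colours, not any individual colour class, and since the colouring $\chi$ is adversarial you get no structural handle on the majority colour inside a $k$-tuple of clusters. In particular, the claim that ``a non-negligible monochromatic family of crossing edges on a bounded set of clusters cannot shatter into more than boundedly many small components'' is false: already for $k=2$ a bipartite graph on parts of size $N$ with density $1/2$ can consist of $N/2$ components of constant size (pair off matched sub-blocks), and the adversary can realize exactly such a colour class inside $\hnp$. The sub-cluster refinement you propose does not repair this, because whatever bounded parameter $s$ you pick, the adversary can shatter a colour class into components that straddle your sub-cluster boundaries or that are simply smaller than a sub-cluster. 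The property you actually need --- that within a $k$-tuple of clusters the majority colour class has the ``every collection of $\ep$-fraction subsets spans an edge'' property --- is exactly a regularity statement for the colour class, and it cannot be derived from the union property plus counting, uniformly over colourings. The paper obtains it by invoking sparse regularity for the family $\{H_i\}$; without some substitute mechanism that constrains each colour class (not just their union), your reduction cannot be carried out, so as written the argument does not close.
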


As an application, we prove a version of Theorem \ref{gthm1} for nearly complete hypergraphs (Sections~\ref{sec:4colored} and \ref{sec:5colored}) and then obtain a version for random hypergraphs as an immediate corollary (Section~\ref{sub:components}).

\begin{theorem}\label{thm:main2}
For all $\alpha>0$ and $k\geq 3$ there exists  $\ep>0$ and $n_0$ such that if $G$ is a $k$-uniform hypergraph on $n\geq n_0$ vertices with $e(G)>(1-\ep)\binom{n}{k}$, then
\begin{enumerate}
\item $\mc_k(G)\geq (1-\alpha)n$, and
\item $\mc_{k+1}(G)\geq \left(\frac{k}{k+1}-\alpha\right)n$.
\end{enumerate}
\end{theorem}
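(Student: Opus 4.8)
The plan is to prove parts (1) and (2) by a single contradiction argument, with $\ep$ chosen small and $n_0$ large in terms of $\alpha$ and $k$. Put $r=k$, $\gamma=1$ for part (1) and $r=k+1$, $\gamma=\frac{k}{k+1}$ for part (2), assume $\alpha$ is small (otherwise replace it by a smaller value), and suppose some $r$-coloring $\chi$ of $G$ has every monochromatic component of order $<(\gamma-\alpha)n$. For each color $i\in[r]$ the color-$i$ components partition $V(G)$ into parts of size $<(\gamma-\alpha)n$, and amalgamating parts greedily yields a bipartition $V(G)=X_i\sqcup Y_i$ across which no color-$i$ edge passes (each color-$i$ edge lies inside one color-$i$ component, hence inside $X_i$ or $Y_i$) and with $\alpha n\le\min\{|X_i|,|Y_i|\}$ and $\max\{|X_i|,|Y_i|\}\le(\gamma-\alpha)n$; for part (2) this last estimate needs $k\ge3$. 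Take the common refinement of the $r$ bipartitions: it has at most $2^r$ cells, and each vertex receives a \emph{signature} in $\{0,1\}^r$ recording which side of each bipartition it occupies. Call a cell \emph{large} if it has $\ge\beta n$ vertices, where $\beta=\alpha\,2^{-r-1}$; then fewer than $\alpha n$ vertices lie outside large cells, so every side of every bipartition contains a large cell. Consequently the set $S\subseteq\{0,1\}^r$ of signatures of large cells lies in no coordinate-hyperplane $\{s:s_i=b\}$, and $|S|\ge2$.

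The combinatorial core is the claim that \emph{from such an $S$ one can choose at most $k$ signatures, repetitions allowed, so that both $0$ and $1$ appear in each of the $r$ coordinates.} If $u\ne v$ in $S$ are at Hamming distance $d$, they already exhibit both values in the $d$ coordinates where they differ, and for each remaining coordinate we append one element of $S$ realizing the missing value (possible since $S$ meets no coordinate-hyperplane), using $2+(r-d)$ signatures in all, which is at most $k$ when $d\ge r-k+2$. For $r=k$ we need $d\ge2$; since $S$ avoids coordinate-hyperplanes and has $\ge2$ elements, not all its pairwise distances are $\le1$, so such a pair exists. For $r=k+1$ we need $d\ge3$; if instead $S$ has diameter $\le2$ then, after replacing some $X_i$'s by their complements so that $\mathbf 0\in S$, all signatures in $S$ have weight $\le2$, the supports of the weight-$2$ ones form an intersecting family of $2$-element sets (hence a star or a triangle), and a short case check using $k\ge3$ shows $S$ must be, up to relabeling coordinates, one of $\{\mathbf 0,e_1,\dots,e_{k+1}\}$, $\{\mathbf 0\}\cup\{e_x+e_j:j\ne x\}$, or $\{\mathbf 0,e_x\}\cup\{e_x+e_j:j\ne x\}$. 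For each of these three the claim genuinely fails, and this is precisely where the value $\frac{k}{k+1}$ enters: the inclusions $C\subseteq X_i$ forced by the signatures, combined with $|X_i|\le(\gamma-\alpha)n$ and the fact that large cells omit fewer than $\alpha n$ vertices, force several disjoint cells each to contain more than $\frac{1}{k+1}n$ vertices, contradicting that their sizes total at most $n$. Hence the claim holds in every case.

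Finally, let $t_1,\dots,t_m$ (with $m\le k$) be the chosen signatures, with multiplicities $\mu_1,\dots,\mu_m$ summing to $k$, and let $C_1,\dots,C_m$ be large cells with these signatures. Picking $\mu_j$ distinct vertices from each $C_j$ produces at least $\prod_{j}\binom{\beta n}{\mu_j}\ge(\beta n/k)^{k}$ distinct $k$-subsets of $V(G)$; since $\ep<(\beta/k)^k$, this exceeds $\ep\binom nk$, so at least one such $k$-set is an edge $e$ of $G$. Writing $c=\chi(e)$, the choice of the $t_j$ guarantees that both $0$ and $1$ occur in coordinate $c$ among the signatures of the $k$ vertices of $e$, so $e$ has vertices on both sides of $X_c\sqcup Y_c$ — contradicting that no color-$c$ edge crosses that bipartition. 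This proves $\mc_r(G)\ge(\gamma-\alpha)n$, which is (1) for $r=k$ and (2) for $r=k+1$. The main obstacle is the combinatorial claim when $r=k+1$: the bare set-theoretic statement is false, so one must isolate exactly the degenerate signature patterns and defeat each of them using the quantitative lower and upper bounds on $|X_i|$ and $|Y_i|$ — the one place where $\frac{k}{k+1}$, rather than a weaker constant, is really needed, and in effect a reworking of the extremal argument behind Theorem~\ref{gthm1}. The bookkeeping of the constants $\ep\ll\beta\ll\alpha$ and the details of the greedy amalgamation are routine.
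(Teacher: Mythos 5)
Your argument is correct, and for part (2) it takes a genuinely different route from the paper. For part (1) your proof is essentially a reformulation of the paper's Theorem~\ref{kunikcolors}: both build, for each color $i$, a bipartition $V(G)=X_i\sqcup Y_i$ that no color-$i$ edge crosses, with both sides of order $\Omega(n)$, and then exhibit more than $\ep\binom nk$ $k$-subsets that meet both sides of every bipartition and so must all be non-edges; the paper manipulates $A_1\cap\dots\cap A_k$, $B_1\cap B_2$, $B_3,\dots,B_k$ directly, while your signature formalism packages the same counting more cleanly. For part (2), however, the paper first proves a minimum-degree version (Theorem~\ref{kunik+1colors}), relying on link graphs, monochromatic $1$-cores (Lemmas~\ref{lem:1core} and~\ref{lem:1core_AB}), and a careful interplay between a largest component $C_1$ and a second component $C_2$, and only then converts edge density to minimum degree (Corollary~\ref{cor:degtoedges}). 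You instead push the bipartition-and-signature mechanism to $r=k+1$: the covering multiset of size $\le k$ exists whenever $S$ has diameter $\ge3$, and otherwise $S$ is forced (for $k\ge3$) into one of three degenerate configurations, each defeated arithmetically because the inclusions forced by the signatures together with $|X_i|,|Y_i|\le(\tfrac{k}{k+1}-\alpha)n$ would produce $k+1$ pairwise disjoint cells each of size exceeding $\tfrac{n}{k+1}$. I checked both the classification of the diameter-$\le2$ signature sets (the triangle pattern is excluded for $k\ge3$ since the coordinates it omits cannot be covered at distance $\le2$ from it) and the three quantitative contradictions, and they all hold. Your approach is more unified and self-contained, dispensing with the min-degree detour and the whole $1$-core apparatus, at the cost of a case analysis that you should write out in full; the paper's approach is more modular and produces intermediate lemmas of independent use. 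One step to make explicit is the greedy amalgamation achieving $|X_i|,|Y_i|\in[\alpha n,(\gamma-\alpha)n]$: either some color-$i$ component has size in $[(1-\gamma+\alpha)n,(\gamma-\alpha)n)$ and can stand alone, or all are smaller than $(1-\gamma+\alpha)n$ and stopping a running union as soon as its size passes $(1-\gamma+\alpha)n$ gives a side of size below $2(1-\gamma+\alpha)n\le(\gamma-\alpha)n$ --- an inequality equivalent to $\alpha\le\tfrac{k-2}{3(k+1)}$, which is exactly where $k\ge3$ enters, as you flagged.
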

\noindent
(We give an explicit bound on $\alpha$ in terms of $\ep$ in the proof of Theorem \ref{kunikcolors} and Corollary \ref{cor:degtoedges}.)

\begin{corollary}\label{thm:main}
Let $k\geq 3$, let $\alpha>0$, and let $p=p(n)$ be such that $pn^{k-1}\to\infty$. Then a.a.s. 
\begin{enumerate}
\item $\mc_k\left(\hnp\right)\geq (1-\alpha)n$, and 
\item $\mc_{k+1}\left(\hnp\right)\geq \left(\frac{k}{k+1}-\alpha\right)n$.
\end{enumerate}
\end{corollary}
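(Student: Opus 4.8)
The plan is to derive Corollary~\ref{thm:main} directly by feeding the nearly-complete estimates of Theorem~\ref{thm:main2} into the transfer principle of Theorem~\ref{thm:main1}. For part~(1) I would apply Theorem~\ref{thm:main1} with $r=k$ and with the constant function $\varphi(k,k)=1$; for part~(2) I would apply it with $r=k+1$ and $\varphi(k+1,k)=\tfrac{k}{k+1}$. With these choices the conclusion of Theorem~\ref{thm:main1} becomes exactly ``a.a.s.\ $\mc_k(\hnp)\ge(1-\alpha)n$'' and ``a.a.s.\ $\mc_{k+1}(\hnp)\ge(\tfrac{k}{k+1}-\alpha)n$'' for every $\alpha>0$ whenever $pn^{k-1}\to\infty$, which is precisely the statement to be proved.

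So the only thing left is to check that these $\varphi$ satisfy the defining condition in Theorem~\ref{thm:main1}: for the relevant pair $(r,k)$ and every $\alpha^*>0$ there must exist $\ep^*>0$ and $t_0$ so that every $k$-uniform $G$ on $t\ge t_0$ vertices with $e(G)\ge(1-\ep^*)\binom{t}{k}$ satisfies $\mc_r(G)\ge(\varphi-\alpha^*)t$. For $\varphi=1$, $r=k$ this is Theorem~\ref{thm:main2}(1) with $\alpha^*$ in place of $\alpha$ and $t$ in place of $n$ (apply it to get $\ep$ and $n_0$, then take $\ep^*=\ep/2$ to turn the strict inequality $e(G)>(1-\ep)\binom{t}{k}$ into the non-strict one, and $t_0=n_0$); for $\varphi=\tfrac{k}{k+1}$, $r=k+1$ it is Theorem~\ref{thm:main2}(2) in the same way. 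Plugging this into Theorem~\ref{thm:main1} finishes both parts.

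There is essentially no obstacle here, since all the real work is already contained in Theorems~\ref{thm:main1} and~\ref{thm:main2}; the only care needed is in matching up quantifiers---in particular, it is the ``for all $\alpha^*>0$'' in the hypothesis of Theorem~\ref{thm:main1} that makes the arbitrarily small loss $\alpha$ in the conclusion legitimate. One small indexing remark I would include: Theorem~\ref{thm:main1} phrases the condition on $\varphi$ as holding for all $k\ge2$ and $r\ge1$, whereas Theorem~\ref{thm:main2} supplies it only for $k\ge3$ and $r\in\{k,k+1\}$. This is harmless: either one observes that the proof of Theorem~\ref{thm:main1} uses the condition only for the single pair $(r,k)$ named in its conclusion (so it applies as stated for each fixed $k\ge3$ with $r\in\{k,k+1\}$), or one extends $\varphi$ to the remaining pairs by any valid lower bound, which leaves the two cases we need untouched.
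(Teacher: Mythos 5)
Your proof is correct and follows exactly the route the paper intends: the paper presents Corollary~\ref{thm:main} as an immediate consequence of feeding Theorem~\ref{thm:main2} into the transfer principle Theorem~\ref{thm:main1}, with $\varphi=1$ for $r=k$ and $\varphi=\tfrac{k}{k+1}$ for $r=k+1$. Your quantifier bookkeeping (adjusting $\ep^*$ to bridge the strict versus non-strict edge-count inequality, and noting that the hypothesis of Theorem~\ref{thm:main1} is only invoked at the single relevant pair $(r,k)$) is careful and accurate.
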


\subsection{Loose-cycles}

We say that a $k$-uniform hypergraph $(V,E)$ is a \emph{loose cycle} if there exists a cyclic ordering of the vertices $V$ such that every edge consists of $k$ consecutive vertices and every pair of consecutive edges intersects in a single vertex (see Figure~\ref{subfig:loose}). Consequently, $|E| = |V| / (k-1)$. 
%In other words, a loose cycle has the minimum possible number of edges among all cycles on $|V|$ vertices. 

\begin{figure}
\centering
    \begin{subfigure}[b]{0.45\textwidth}
        \centering
        \includegraphics[scale=0.6]{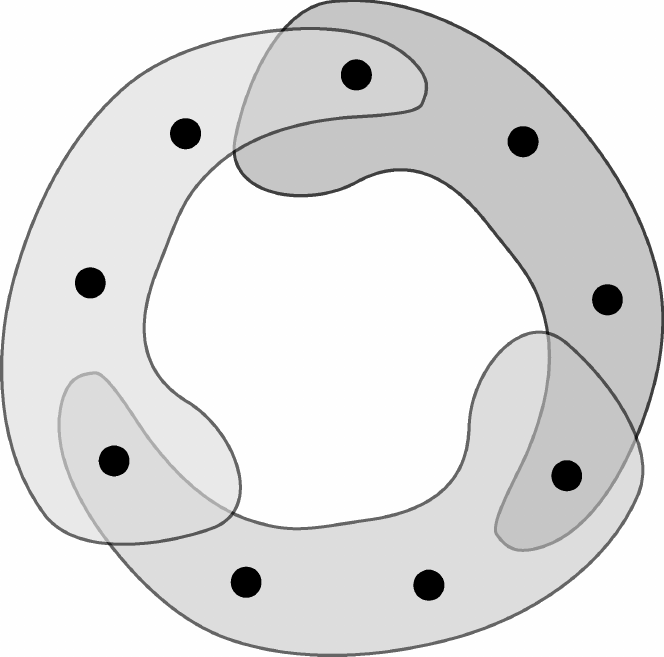}
        \caption[]{}
        \label{subfig:loose}
    \end{subfigure}%
    ~ 
    \begin{subfigure}[b]{0.45\textwidth}
        \centering
        \includegraphics[scale=0.6]{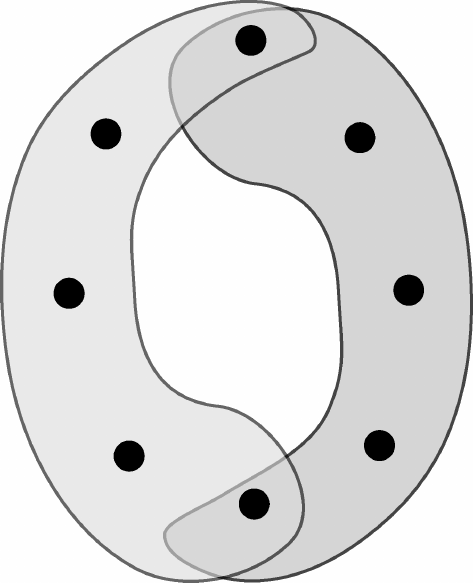}
        \caption[]{}
        \label{subfig:diamond}
    \end{subfigure}
    \caption{A 4-uniform loose cycle~(\subref{subfig:loose}) and a 5-uniform diamond~(\subref{subfig:diamond}).}
    \label{fig:loose}
\end{figure}

Let $G$ be a $k$-uniform hypergraph.  A \emph{connected loose cycle packing on $t$ vertices} is a connected sub-hypergraph $H\subseteq G$ and a vertex disjoint collection of loose cycles $C_1, \dots, C_\ell\subseteq H$ such that $\sum_{i=1}^\ell|V(C_i)|=t$.  A \emph{connected diamond matching on $t$ vertices} is a connected loose cycle packing on $t$ vertices such that every cycle consists of 2 edges and so has exactly $2k-2$ vertices (see Figure~\ref{subfig:diamond}). When $k=2$ we consider an edge to be a cycle with $2$ vertices.

Gy{\'a}rf{\'a}s, S{\'a}rk{\"o}zy and Szemer{\'e}di \cite{GSS2008diamond} proved that for all $k\geq 3$ and $\eta>0$, there exists $n_0$ such that if $n\geq n_0$, then every $2$-edge-coloring of $K_n^k$ contains a monochromatic cycle of length $(1-\eta)\frac{2k-2}{2k-1}n$. (The $k=3$ case was previously proved by Haxell, \L{}uczak, Peng, R\"odl, Ruci\'nski, Simonovits, and Skokan~\cite{HLPRRSS2006}.) Their proof follows from a more basic result which says that for all $k\geq 3$ and $\eta>0$, there exists $\ep>0$ and $n_0$ such that if $G$ is a $k$-uniform hypergraph on $n\geq n_0$ vertices with $e(G)\geq (1-\ep)\binom{n}{k}$, then every $2$-edge-coloring of $G$ contains a monochromatic connected diamond matching on $(1-\eta)\frac{2k-2}{2k-1}n$ vertices.

We provide a result which reduces the problem of finding long monochromatic loose cycles in random hypergraphs to the problem of finding large monochromatic connected loose cycle packing in nearly complete hypergraphs (Section~\ref{sub:loose}).  Then applying the result of \cite{GSS2008diamond}, we get an asymptotically tight result for $2$-colored random $k$-uniform hypergraphs.

\begin{theorem}\label{thm:cycles1}
Suppose a function $\psi = \psi(r, k)$ satisfies the following condition: for all $k\geq 2$, $r\geq 1$, and $\alpha^*> 0$ there exists $\ep^*>0$ and $t_0>0$ such that if $G$ is a $k$-uniform hypergraph on $t\geq t_0$ vertices with $e(G)\geq (1-\ep^*)\binom{t}{k}$, then every $r$-coloring of the edges of $G$ contains a monochromatic connected loose cycle packing on at least $(\psi-\alpha^*)t$ vertices.

Then for any $k\geq 2$, $r\geq 1$, $\alpha>0$, and $p=p(n)$ such that $pn^{k-1}\to\infty$ we have that a.a.s. every $r$-edge-coloring of $\hnp$ contains a monochromatic loose cycle on at least $(\psi-\alpha)n$ vertices.
\end{theorem}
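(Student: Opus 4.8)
The proof closely parallels that of Theorem \ref{thm:main1}, so the plan is to run the same reduction and then highlight the one genuinely new ingredient, which is the embedding of an honest loose cycle (rather than the certification of a large monochromatic component). Fix $k\ge2$, $r\ge1$, $\alpha>0$, and $p=p(n)$ with $pn^{k-1}\to\infty$, and let $\chi$ be an arbitrary $r$-coloring of $E(H)$, where $H:=\hnp$. First I would fix a large constant $m=m(\alpha,k,r)$ (large enough that the hypothesis on $\psi$ applies with a small parameter $\alpha^*$ and that $m\ge t_0(\alpha^*,k,r)$) and partition $V(H)=V_1\cup\dots\cup V_m$ into $m$ almost-equal parts. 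Since $m$ is constant and $pn^{k-1}\to\infty$, we have $p(n/m)^{k-1}\to\infty$ and $p(n/m)^{k}\to\infty$, and a Chernoff estimate over the $\binom mk$ many $k$-subsets of $[m]$ gives that a.a.s.\ every $k$-subset $\{i_1,\dots,i_k\}$ has $(1\pm o(1))p(n/m)^k$ edges of $H$ with exactly one vertex in each of $V_{i_1},\dots,V_{i_k}$. On that event, form a \emph{complete} reduced $k$-graph $R$ on $[m]$, colour each edge $\{i_1,\dots,i_k\}$ with a $\chi$-colour used on the largest number of those real edges (hence on at least a $\tfrac1r(1-o(1))$-fraction of them), and apply the hypothesis (valid for any $\ep^*>0$ since $R$ is complete) to obtain a colour $c$, a connected colour-$c$ subhypergraph $\Lambda\subseteq R$, and vertex-disjoint loose cycles $C_1,\dots,C_\ell\subseteq\Lambda$ with $\sum_{j=1}^\ell|V(C_j)|\ge(\psi-\alpha^*)m$.

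It then remains to lift $\{C_1,\dots,C_\ell\}$ to a single colour-$c$ loose cycle of $H$ on at least $(\psi-\alpha)n$ vertices. Here I would use the hypergraph version of \L{}uczak's connected-matching method already used in the proof of Theorem \ref{thm:main1}: walk around each reduced cycle $C_j$ and, for each of its reduced edges $\{i_1,\dots,i_k\}$, build a colour-$c$ loose path of $H$ through the corresponding parts that captures all but a small fraction of the relevant vertices, gluing consecutive loose paths of $C_j$ at a shared link vertex; then stitch $C_1,\dots,C_\ell$ together into one cycle using short colour-$c$ loose paths obtained by lifting short colour-$c$ loose paths of the connected subhypergraph $\Lambda$. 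The new point, compared with Theorem \ref{thm:main1}, is that these paths must be routed \emph{coherently} --- with the link vertices at the parts shared by two reduced edges agreed upon beforehand --- so that the output is genuinely one loose cycle rather than a disjoint union of cycles; the vertices lost along the way (unused portions of the parts covered by $\bigcup_j V(C_j)$, all of the parts outside $\bigcup_j V(C_j)$, the non-transversal edges of $H$, and the vertices spent on the $\ell$ connectors) must total at most $(\alpha-\alpha^*)n$, which dictates the final choice of $\alpha^*$ and $m$ in terms of $\alpha,k,r$.

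The step I expect to be the main obstacle is exactly this lift, and in particular keeping the total loss below $(\alpha-\alpha^*)n$. Because $p\to0$, the ``pieces'' of $H$ across $k$ parts are \emph{sparse}, so one cannot invoke hypergraph regularity or blow-up lemmas as in the dense treatment of $K_n^k$ in \cite{GSS2008diamond}; the loose paths have to be constructed directly, relying on robust pseudorandom properties of $\hnp$ that hold a.a.s.\ by first-moment and concentration estimates (typical degree behaviour into each of the boundedly many parts, and the existence of long loose paths inside any subhypergraph that keeps a positive fraction of the sparse random edges spread over boundedly many parts). A local analysis of a single piece is not enough, since an adversary can concentrate one colour onto a sub-box of a piece; the point is that this cannot be done coherently across all pieces of $\Lambda$ without either violating the majority condition on too many reduced edges or contradicting $pn^{k-1}\to\infty$, and quantifying that --- as in the proof of Theorem \ref{thm:main1} --- is what forces the loss to be $o(n)$. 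The remaining work (taking the $\ell$ connectors and the cyclic closure to have total length $o(n)$, and bookkeeping the error terms) is routine, and specializing $k=2$ should recover the graph statement underlying \cite{BD2017,DP2017}.
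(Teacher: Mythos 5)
The proposal omits the one tool the whole argument rests on: the sparse weak hypergraph regularity lemma (Theorem~\ref{regularity}). You partition $V(H)$ into a \emph{fixed} number $m=m(\alpha,k,r)$ of parts chosen in advance, form a complete reduced graph, and color each reduced edge by a majority color. But nothing forces the majority color of a reduced edge to be spread out over the corresponding $k$-partite piece of $H$: with an arbitrary partition, an adversary absolutely can place the majority color of every piece inside a small sub-box (for instance, fix a bipartition $X\cup Y$ of $V(H)$ with $|X|=(1-\delta)n$, color all edges inside $X$ red and all other edges blue; the majority color of every piece is red, but the red graph misses a $\delta$-fraction of every part). In that situation the DFS-based construction of a long loose path inside a piece (Lemma~\ref{path in diamond}) simply cannot run, because that lemma's hypothesis is exactly a regularity-type condition: every $k$-tuple of large sub-boxes must contain an edge. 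Your claim that ``this cannot be done coherently across all pieces of $\Lambda$ without \dots contradicting $pn^{k-1}\to\infty$'' is not an argument and is in fact false for a pre-chosen constant partition. The cure is precisely to let the partition be produced by the sparse regularity lemma, which (i) makes the number of parts $t$ a bounded but not pre-fixed quantity in $[t_0,T_0]$, so you cannot fix $m$ ahead of time, (ii) guarantees all but $\ep\binom tk$ cluster $k$-tuples are $(\ep,p)$-regular in every color class simultaneously, and (iii) therefore ensures the majority color on a regular cluster-edge has $p$-density at least $1/(2r)-\ep>\ep$ and is itself regular, which is the hypothesis Lemma~\ref{path in diamond} needs. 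The Chernoff step you do carry out is only used in the paper to establish $(\eta,p,2)$-upper-uniformity so the regularity lemma applies, and to lower-bound $e(V_{i_1},\dots,V_{i_k})$ so the majority color has density $\ge 1/(2r)$; it does not substitute for regularity.

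Once regularity is restored, your lifting outline is essentially the paper's: build a long loose path through each cluster-edge of the packing (via depth-first search as in Lemma~\ref{path in diamond}, with the two degree-$2$ clusters split in half to get endpoints on both sides), and stitch consecutive long paths together using connector loose paths obtained by lifting shortest Berge paths in the monochromatic connected cluster subgraph (Lemma~\ref{loose path between diamonds}), reserving small slabs of each cluster so that all the connectors together consume only $o(n)$ vertices. Your remark that ``specializing $k=2$ should recover the graph statement underlying \cite{BD2017,DP2017}'' is a bit off: those papers are about large monochromatic \emph{components}, i.e.\ the analogue of Theorem~\ref{thm:main1}, whereas Theorem~\ref{thm:cycles1} is about loose cycles.
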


Using a result of Gy\'arf\'as, S\'ark\"ozy, and Szemer\'edi \cite{GSS2008diamond} about large monochromatic connected diamond matchings in nearly complete hypergraphs, we get the following corollary.  

\begin{corollary}\label{cor:cycles}
Let $k\geq 2$ and $\alpha>0$. Choose $p=p(n)$ such that $pn^{k-1}\to \infty$. Then a.a.s. there exists a monochromatic loose cycle on at least $\left(\frac{2k-2}{2k-1}-\alpha\right)n$ vertices in any $2$-coloring of the edges of $\mathcal{H}^{(k)}(n,p)$.
\end{corollary}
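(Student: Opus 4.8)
The plan is to obtain Corollary~\ref{cor:cycles} as an immediate application of Theorem~\ref{thm:cycles1}. Define
\[
\psi(r,k)=\begin{cases}\frac{2k-2}{2k-1}&\text{if }r=2,\\ 0&\text{otherwise.}\end{cases}
\]
Once we check that this $\psi$ satisfies the hypothesis of Theorem~\ref{thm:cycles1}, the theorem yields, for every $k\geq 2$, $r\geq 1$, $\alpha>0$ and $p=p(n)$ with $pn^{k-1}\to\infty$, that a.a.s.\ every $r$-edge-coloring of $\hnp$ contains a monochromatic loose cycle on at least $(\psi-\alpha)n$ vertices; specializing to $r=2$ is exactly the statement of the corollary. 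For $r\neq 2$ the hypothesis of Theorem~\ref{thm:cycles1} is vacuous: the empty loose cycle packing (take $H$ to be a single vertex and $\ell=0$) has $0\geq(\psi-\alpha^*)t=-\alpha^*t$ vertices. So it remains only to verify the hypothesis for $r=2$.

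Fix $\alpha^*>0$ and first suppose $k\geq 3$. Put $\eta=\alpha^*$, so that $(1-\eta)\frac{2k-2}{2k-1}\geq\frac{2k-2}{2k-1}-\alpha^*$ since $\frac{2k-2}{2k-1}<1$. By the basic result of Gy\'arf\'as, S\'ark\"ozy and Szemer\'edi~\cite{GSS2008diamond} recalled in the introduction, there exist $\ep^*=\ep^*(\eta,k)>0$ and $t_0=t_0(\eta,k)$ such that every $2$-coloring of a $k$-uniform hypergraph $G$ on $t\geq t_0$ vertices with $e(G)\geq(1-\ep^*)\binom{t}{k}$ contains a monochromatic connected diamond matching on $(1-\eta)\frac{2k-2}{2k-1}t\geq\left(\frac{2k-2}{2k-1}-\alpha^*\right)t$ vertices. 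Since a connected diamond matching is by definition a connected loose cycle packing, the hypothesis of Theorem~\ref{thm:cycles1} holds for $r=2$ and $k\geq 3$.

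It remains to handle $k=2$, which is not covered by~\cite{GSS2008diamond}: here $\frac{2k-2}{2k-1}=\frac23$, a loose cycle is an ordinary cycle, and --- using the convention that a single edge is a cycle on two vertices --- a connected loose cycle packing is simply a monochromatic connected matching (a matching contained in a connected monochromatic subgraph). Thus what we need is that for every $\alpha^*>0$ there are $\ep^*>0$ and $t_0$ so that every $2$-coloring of the edges of a graph $G$ on $t\geq t_0$ vertices with $e(G)\geq(1-\ep^*)\binom{t}{2}$ contains a monochromatic connected matching covering at least $\left(\frac23-\alpha^*\right)t$ vertices. This is a standard consequence of the classical fact (going back to Gy\'arf\'as) that every $2$-coloring of $K_t$ contains a monochromatic connected matching on $\left(\frac23-o(1)\right)t$ vertices, together with the routine robustness of that statement under replacing $K_t$ by a graph of minimum degree $(1-o(1))t$: deleting from $G$ the at most $\sqrt{\ep^*}\,t$ vertices incident to more than $\sqrt{\ep^*}\,t$ non-edges yields a graph on $t'\geq(1-\sqrt{\ep^*})t$ vertices of minimum degree at least $(1-2\sqrt{\ep^*})t'$, to which that robust form applies (one may also invoke a minimum-degree version of Gy\'arf\'as's connected-matching theorem directly). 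This establishes the hypothesis of Theorem~\ref{thm:cycles1} for $r=2$ and $k=2$, and so completes the proof.

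I do not anticipate a real obstacle here: the entire difficulty is concentrated in Theorem~\ref{thm:cycles1}, and the corollary is just that reduction composed with known extremal statements. The only mildly delicate point is the $k=2$ transition from complete to nearly complete host graphs, which is nonetheless entirely routine.
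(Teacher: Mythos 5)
Your proposal is correct and follows essentially the same route the paper takes: Corollary~\ref{cor:cycles} is obtained by plugging the extremal (nearly complete) result of Gy\'arf\'as, S\'ark\"ozy, and Szemer\'edi (Theorem~\ref{diamonds}) into the reduction Theorem~\ref{thm:cycles1}. The one place you go beyond the paper's one-line derivation is in spelling out the $k=2$ case, where ``connected diamond matching'' degenerates to ``connected matching'' and the required statement is the classical two-thirds connected-matching result in $2$-colored (nearly) complete graphs; the paper leaves this implicit, stating Theorem~\ref{diamonds} for all fixed $k$ while describing the GSS theorem in the introduction only for $k\geq 3$. Your explicit handling of $k=2$ is therefore a small but legitimate bit of added care rather than a different proof. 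The bookkeeping with $\psi(r,k)$ (setting $\psi=0$ for $r\neq 2$ and noting the hypothesis is then vacuously satisfied by the empty packing) is also fine, though unnecessary: the proof of Theorem~\ref{thm:cycles1} fixes $r$ and $k$ at the outset, so it suffices to verify the hypothesis only for the pair $(r,k)=(2,k)$ you actually use.
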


\section{Notation and definitions}

Let $G$ be a $k$-uniform hypergraph. Let $v\in V(G)$ and $U\subseteq V(G)$. Define
\[
d(v, U) = \left|   \left\{S\in \binom{U}{k-1}: S\cup \{v\}\in E(G) \right\}  \right|.
\]
Furthermore, let $\delta(G) = d(v,V(G))$, which is just the \emph{minimum degree of~$G$}. Suppose now that $v \notin U$ and define the \emph{restricted link graph of $v$}, denoted by $L(v,U)$, with vertex set $U$ and edge set $\left\{S\in \binom{U}{k-1}: S\cup \{v\}\in E(G) \right\}$. We call $L(v) = L(v,V(G)\setminus \{v\})$ the \emph{link graph of $v$}, which clearly  is a $(k-1)$-uniform hypergraph on $n-1$ vertices. 

Also recall that the \emph{1-core} of $G$ is the largest induced subgraph of $G$ with
no isolated vertices.

Now suppose that the edges of $G$ are $r$-colored.  We follow the convention that any edge~$e$ of the link graph of a vertex $v$ inherits the color of the edge $e\cup \{v\}$ in $G$.  

For expressions such as $n/t$ (for example the size of a cluster in the regularity lemma) that are supposed to be an integer, we always assume that $n/t \in \mathbb{Z}$ by rounding appropriately without affecting the argument.

\section{Random hypergraphs}

\subsection{Sparse weak hypergraph regularity}

First, we will provide a few definitions.

Let $H=(V,E)$ be a $k$-uniform hypergraph. Given pairwise-disjoint sets $U_1,\dots,U_k\subseteq V$, and $p>0$, we define $e_H(U_1,\dots,U_k)$ to be the total number of edges $e=v_1\dots v_k$ in $H$ such that $v_i\in U_i$ for all $1\leq i\leq k$. Also, we define

\[
d_{H,p}(U_1,\dots,U_k)=\frac{e_H(U_1,\dots,U_k)}{p|U_1|\cdot\ldots\cdot|U_k|}.
\]
When the host graph $H$ is clear from context, we may refer to $e_H(U_1,\dots,U_k)$ as $e(U_1,\dots,U_k)$ $d_{H,p}(U_1,\dots,U_k)$ as just $d_p(U_1,\dots,U_k)$.

For $\epsilon>0$, we say the $k$-tuple $(U_1,\dots,U_k)$ of pairwise-disjoint subsets of $V$ is \emph{$(\epsilon,p)$-regular} if
\[
|d_p(W_1,\dots,W_k)-d_p(U_1,\dots,U_k)|\leq\epsilon
\]
for all $k$-tuples of subsets $W_1\subseteq U_1,\dots,W_k\subseteq U_k$ satisfying $|W_1|\cdot\ldots\cdot|W_k|\geq \epsilon |U_1|\cdot\ldots\cdot|U_k|$.

We say that $H$ is a $(\eta,p,D)$-upper uniform hypergraph if for any pairwise disjoint sets $U_1,\dots,U_k$ with $|U_1|\ge\ldots\ge|U_k|\geq\eta|V(H)|$, $d_p(U_1,\dots,U_k)\leq D$.

The following theorem is a sparse version of weak hypergraph regularity which will be the workhorse used to prove the main result for the random hypergraph.
The sparse version of the regularity lemma~\cite{S78} for graphs was discovered independently by Kohayakawa~\cite{K97}, and R\"odl (see, for example,~\cite{C14}), and subsequently improved by Scott~\cite{Scott11}.
The following is a straightforward generalization of their result for hypergraphs, which we state here without proof.

\begin{theorem}\label{regularity}
	For any given integers $k\geq 2$, $r\geq 1$, and $t_0\geq 1$, and real numbers  $\epsilon>0$,  $D\geq 1$, there are constants $\eta=\eta(k,r,\epsilon,t_0,D)>0$, $T_0=T_0(k,r,\epsilon,t_0,D)\geq t_0$, and $N_0=N_0(k,r,\epsilon,t_0,D)$ such that any collection $H_1,\dots,H_r$ of $k$-uniform hypergraphs on the same vertex set $V$ with $|V|\geq N_0$ that are all $(\eta,p,D)$-upper-uniform with respect to density $0<p\leq 1$ admits an equipartition (i.e. part sizes differ by at most 1) of $V$ into $t$ parts with $t_0\leq t\leq T_0$ such that all but at most $\epsilon\binom{t}{k}$ of the $k$-tuples induce an $(\epsilon,p)$-regular $k$-tuple in each $H_i$.
\end{theorem}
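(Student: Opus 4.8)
The plan is to adapt the classical energy-increment (index) argument for Szemer\'edi's regularity lemma to the sparse, $k$-uniform, $r$-colored setting, following the template of the sparse regularity lemmas of Kohayakawa, R\"odl, and Scott. First, for an equipartition $\mathcal{P}=\{V_1,\dots,V_t\}$ of $V$ I would define the energy
\[
q(\mathcal{P}) = \sum_{i=1}^{r}\ \sum_{\{j_1,\dots,j_k\}\in\binom{[t]}{k}} \frac{|V_{j_1}|\cdots|V_{j_k}|}{|V|^{k}}\,\overline{d}_{i}(V_{j_1},\dots,V_{j_k})^{2},
\]
where $\overline{d}_{i}(\cdot)=\min\{d_{H_i,p}(\cdot),D\}$ is the $p$-normalized density truncated at $D$. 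The truncation is essential in the sparse regime: unlike the dense case, where densities automatically lie in $[0,1]$, here $d_{H_i,p}$ could in principle be huge, and it is precisely the $(\eta,p,D)$-upper-uniformity hypothesis that guarantees the truncation almost never bites on a $k$-tuple of \emph{parts} (every part of every partition we shall produce has size at least $|V|/T_0\geq\eta|V|$, hence $d_{H_i,p}\leq D$ on such $k$-tuples). With the truncation one has $0\leq q(\mathcal{P})\leq rD^{2}$ for every partition.

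Second, I would establish the two standard lemmas in this setting. \emph{Refinement does not decrease energy:} if $\mathcal{Q}$ refines $\mathcal{P}$ then $q(\mathcal{Q})\geq q(\mathcal{P})$, which is the defect form of the Cauchy--Schwarz inequality applied one coordinate at a time over the $k$ slots (the $p$-factor and the truncation pass through cleanly because inside a fixed $k$-tuple of old parts all the relevant sub-tuples are still ``large''). \emph{Energy increment:} if more than $\epsilon\binom{t}{k}$ of the $k$-tuples of parts fail to be $(\epsilon,p)$-regular in some $H_i$, then subdividing each part $V_j$ using the witnessing subsets $W_1,\dots,W_k$ of all the irregular $k$-tuples that contain it produces a refinement $\mathcal{Q}$ with $q(\mathcal{Q})\geq q(\mathcal{P})+c$ for some $c=c(\epsilon,D)>0$ (on the order of $\epsilon^{3}/D$, obtained by summing the per-$k$-tuple gains), where $\mathcal{Q}$ has at most $2^{\,r\binom{t-1}{k-1}}$ pieces per old part. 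Upper-uniformity is used again here to keep every density bounded by $D$ while the gain is extracted, so that the truncation in $q$ does not swallow it.

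Third, I would run the iteration from an arbitrary equipartition into $t_0$ parts: as long as the current partition has more than $\epsilon\binom{t}{k}$ irregular $k$-tuples in some color, refine as above. Since $q$ is bounded by $rD^{2}$ and increases by at least $c$ at each step, the process halts after at most $rD^{2}/c$ steps; propagating the $2^{\,r\binom{t-1}{k-1}}$-type blow-up through this bounded number of steps (and then re-equipartitioning into parts of equal size, which moves only a negligible number of vertices) yields the promised $T_0=T_0(k,r,\epsilon,t_0,D)$. I would then fix $\eta=\eta(k,r,\epsilon,t_0,D)$ small enough that $\eta<1/(kT_0)$, which ensures every part occurring throughout the iteration has size at least $|V|/T_0\geq\eta|V|$ and so is covered by the upper-uniformity hypothesis, justifying all the density bounds used above; and take $N_0$ large enough that the equipartitioning and the rounding of part sizes to within $1$ contribute only lower-order error to the counts.

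I expect the main obstacle to be the interaction between the $p$-normalization and the truncation at $D$ inside the increment step: one must check carefully that the upper-uniformity assumption is strong enough to bound not only the densities of $k$-tuples of \emph{parts} but also the densities arising on the witnessing subsets $W_1,\dots,W_k$ (which individually can be small), so that the Cauchy--Schwarz energy gain survives the truncation. By contrast, the $k$-uniform aspect (iterating Cauchy--Schwarz over the $k$ coordinates) and the $r$-colored aspect (summing energies over colors and refining whenever any color is irregular) are routine bookkeeping on top of this core point.
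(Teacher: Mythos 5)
The paper states Theorem~\ref{regularity} without proof, remarking only that it is a straightforward generalization of the sparse regularity lemmas of Kohayakawa, R\"odl, and Scott; so there is no proof in the paper to compare against, and your proposal must be judged on its own.

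Your overall template (energy increment, with the $r$-color and $k$-coordinate bookkeeping layered on top) is the right one. But the specific device of truncating the density at $D$ inside the energy functional introduces a genuine error. The function $x\mapsto\min(x,D)^2$ is not convex -- it has a concave kink at $x=D$ -- so the basic \emph{refinement inequality} $q(\mathcal{Q})\geq q(\mathcal{P})$ already fails for the truncated energy. A one-coordinate example: take $D=1$ and split a cell $U$ with $d_p(U)=1$ into two equal halves of density $2$ and $0$; the truncated energy of the refinement is $\frac{1}{2}\cdot 1+\frac{1}{2}\cdot 0=\frac12$, strictly below $d_p(U)^2=1$. Upper-uniformity cannot repair this: as you yourself observe, the witnessing sets $W_1,\dots,W_k$ need only satisfy the product condition $\prod|W_i|\geq\epsilon\prod|U_i|$, so individual $W_i$ can be far smaller than $\eta|V|$, and nothing bounds $d_p(W_1,\dots,W_k)$ by $D$. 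You flag exactly this as the ``main obstacle,'' but it is not a delicate check to be carried out -- in your formulation it simply fails, so the claim ``the truncation passes through cleanly because inside a fixed $k$-tuple of old parts all the relevant sub-tuples are still large'' is wrong.

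The standard fix (the one implicit in Kohayakawa--R\"odl, and what the authors are implicitly invoking) is to drop the truncation entirely. Use the untruncated energy $q(\mathcal{P})=\sum(\text{weight})\,d_p(\cdot)^2$. One only needs the a priori upper bound $q\leq rD^2$ to hold for the \emph{equipartitions} $\mathcal{P}_0,\mathcal{P}_1,\dots$ that appear in the iteration, never for the intermediate witness-refinements $\mathcal{Q}_i$ with tiny cells. For equipartitions into at most $T_0$ parts one has parts of size $\geq n/T_0\geq\eta n$, so upper-uniformity bounds every cell density by $D$ and hence $q(\mathcal{P}_i)\leq rD^2$. The untruncated energy satisfies the refinement inequality by ordinary Jensen (no kink), the defect Cauchy--Schwarz gives the increment $q(\mathcal{Q}_i)\geq q(\mathcal{P}_i)+c(\epsilon,D)$, and the next equipartition $\mathcal{P}_{i+1}$ (a further refinement of $\mathcal{Q}_i$ modulo a negligible leftover) inherits $q(\mathcal{P}_{i+1})\geq q(\mathcal{Q}_i)-o(1)$. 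The chain $q(\mathcal{P}_i)+c\lesssim q(\mathcal{P}_{i+1})\leq rD^2$ bounds the number of rounds, $\eta$ is chosen last to cover the final cell size, and the rest of your bookkeeping (colors, $k$-slot Cauchy--Schwarz, re-equipartitioning) is as you describe. (Scott's contribution is a different potential that dispenses with the upper-uniformity hypothesis entirely, but since the theorem as stated here keeps that hypothesis, the Kohayakawa--R\"odl route suffices.)
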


\subsection{Large components}\label{sub:components}

We will use the following lemma to turn an $(\ep, p)$-regular $k$-tuple of some color into a large monochromatic subgraph.

\begin{lemma}\label{lem:edgetocomp}
Let $0<\ep< 1/3$, let $k\geq 2$, and let $G=(V,E)$ be a $k$-partite hypergraph with vertex partition $V=V_1\cup V_2\cup \ldots \cup V_k$.  If for all $\{U_1, \dots, U_k\}$, with $|U_i|\geq \ep |V_i|$ and $U_i\subseteq V_i$ for all $i\in [k]$, there exists an edge $\{u_1, \dots, u_k\}$ with $u_i\in U_i$ for all $i\in [k]$, then $G$ contains a connected component $H$ such that $|V(H)\cap V_i|\geq (1-\ep)|V_i|$ for all $i\in [k]$.  
\end{lemma}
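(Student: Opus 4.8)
The plan is to work directly with the connected components of $G$. Write $\mathcal{H}$ for the set of components (an isolated vertex being its own component), and for a component $H$ and an index $i\in[k]$ abbreviate $H_i := V(H)\cap V_i$; note that $\{H_i : H\in\mathcal{H}\}$ partitions $V_i$. Given a family $\mathcal{A}\subseteq\mathcal{H}$ and an index $i$, call $\mathcal{A}$ \emph{$i$-large} if $|\bigcup_{H\in\mathcal{A}}H_i|\geq \epsilon|V_i|$ and \emph{$i$-huge} if $|\bigcup_{H\in\mathcal{A}}H_i|> (1-\epsilon)|V_i|$; since $\epsilon<1/2$, huge implies large. The goal is to exhibit a single component $H^*$ for which the singleton family $\{H^*\}$ is $i$-huge for every $i\in[k]$, as then $|V(H^*)\cap V_i|>(1-\epsilon)|V_i|$ for all $i$, which is stronger than required.

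\emph{Step 1 (the engine).} I claim that if $\mathcal{A}\subseteq\mathcal{H}$ is $i$-large for some $i$, then $\mathcal{A}$ is $j$-huge for every $j\in[k]$. It suffices to treat $j\neq i$: once that is known, since $k\geq2$ we may pick some $j\neq i$, observe $\mathcal{A}$ is $j$-huge hence $j$-large, and apply the statement again with $j$ in the role of $i$ to conclude $\mathcal{A}$ is $i$-huge. So fix $j\neq i$ and suppose for contradiction that $\mathcal{A}$ is not $j$-huge, i.e.\ $|\bigcup_{H\in\mathcal{A}}H_j|\leq(1-\epsilon)|V_j|$. Apply the lemma's hypothesis to the tuple $U_i=\bigcup_{H\in\mathcal{A}}H_i$, $U_j=V_j\setminus\bigcup_{H\in\mathcal{A}}H_j$, and $U_\ell=V_\ell$ for $\ell\notin\{i,j\}$: these satisfy $|U_i|\geq\epsilon|V_i|$, $|U_j|\geq\epsilon|V_j|$, and $|U_\ell|=|V_\ell|\geq\epsilon|V_\ell|$, so there is an edge $e=\{u_1,\dots,u_k\}$ with $u_m\in U_m$ for all $m$. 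Now $u_i\in U_i$ lies in $H_i$ for some $H\in\mathcal{A}$, so the component containing $e$ is $H\in\mathcal{A}$; hence $u_j\in V(H)\cap V_j=H_j\subseteq\bigcup_{H'\in\mathcal{A}}H'_j$, contradicting $u_j\in U_j$. This proves the claim.

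\emph{Step 2 (producing a large singleton).} It remains to find a component $H^*$ and an index $i_0$ with $\{H^*\}$ being $i_0$-large; Step 1 then immediately finishes. Suppose no such $H^*$ exists, so $|H_i|<\epsilon|V_i|$ for every component $H$ and every $i$. Fix $i=1$. Since $\{H_1 : H\in\mathcal{H}\}$ partitions $V_1$ into pieces each of size $<\epsilon|V_1|$, adding components one at a time until the accumulated coverage of $V_1$ first reaches $\epsilon|V_1|$ produces a family $\mathcal{A}\subseteq\mathcal{H}$ with $\epsilon|V_1|\leq\big|\bigcup_{H\in\mathcal{A}}H_1\big|<2\epsilon|V_1|$. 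Thus $\mathcal{A}$ is $1$-large, and $\bigcup_{H\in\mathcal{H}\setminus\mathcal{A}}H_1=V_1\setminus\bigcup_{H\in\mathcal{A}}H_1$ has size $>(1-2\epsilon)|V_1|\geq\epsilon|V_1|$ (here the hypothesis $\epsilon<1/3$ is used), so $\mathcal{H}\setminus\mathcal{A}$ is $1$-large as well. By Step 1 both $\mathcal{A}$ and $\mathcal{H}\setminus\mathcal{A}$ are $2$-huge, so $\bigcup_{H\in\mathcal{A}}H_2$ and $\bigcup_{H\in\mathcal{H}\setminus\mathcal{A}}H_2$ are disjoint subsets of $V_2$ each of size $>(1-\epsilon)|V_2|>\tfrac12|V_2|$ — impossible.

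The genuinely delicate point, and the only place where $\epsilon<1/3$ is needed, is Step 2: a part could a priori be shattered into many components that are each individually small, and the argument rules this out by splitting the shattering into two complementary $1$-large families, each of which Step 1 then forces to occupy more than half of $V_2$. Everything else is routine bookkeeping: checking that the tuple handed to the lemma's hypothesis is $\epsilon$-large in every coordinate, and that the greedy grouping in Step 2 terminates with coverage in the window $[\epsilon|V_1|,2\epsilon|V_1|)$.
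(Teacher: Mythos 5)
Your proof is correct, and it takes a genuinely different route from the paper's. The paper first passes to small auxiliary sets $X_i \subseteq V_i$ of size $\lceil 3\ep |V_i|\rceil$, finds (via a minimality argument over unions of components of the induced subhypergraph $G'$) a single component of $G'$ that hits one $X_i$ in at least $\ep|V_i|$ vertices, bootstraps this to all $X_i$, and only then lifts back to $G$ and uses the hypothesis one more time to show the full component meets each $V_i$ in $\geq(1-\ep)|V_i|$ vertices. You instead work directly with the component decomposition of $G$, organized around the clean ``engine'' observation that any union $\mathcal{A}$ of components whose footprint in some $V_i$ reaches $\ep|V_i|$ must have footprint exceeding $(1-\ep)|V_j|$ in \emph{every} $V_j$ (the hypothesis forbids putting a large-enough ``$\mathcal{A}$-side'' set against a large-enough complement). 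This is tidier: it avoids the auxiliary restricted graph entirely, it packages the double use of the hypothesis (once to find a large component, once to grow it) into a single reusable claim, and it delivers the slightly stronger strict bound $|V(H^*)\cap V_j| > (1-\ep)|V_j|$. One very minor remark: in Step 2 you could conclude a hair faster by noting that Step 1 makes $\mathcal{A}$ $1$-huge, so $|\bigcup_{H\in\mathcal{A}}H_1|>(1-\ep)|V_1|>2\ep|V_1|$ already contradicts your greedy construction directly, without invoking $\mathcal{H}\setminus\mathcal{A}$ or part $2$; but your route through $2$-hugeness is equally valid.
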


\begin{proof}
Start by choosing for each $i\in [k]$ a set $X_i\subseteq V_i$ with $|X_i|=\lceil 3\ep|V_i|\rceil$ and let $G'$ be the hypergraph induced by $\{X_1,\dots, X_k\}$.  Suppose that no component of $G'$ has intersection of size at least $\ep |V_i|$ with any $X_i$.  
Choose $t \ge 2$ as small as possible so that there exist components $H_1, \dots, H_t$ of $G'$ such that for some $\ell\in [k]$, $|X_\ell\cap \bigcup_{i\in [t]}V(H_i)|\geq \ep |V_\ell|$.  By the minimality of $t$, we have that for all $j\in [k]$, $|X_j\cap \bigcup_{i\in [t]}V(H_i)|< 2\ep |V_j|$.  So let $X_\ell'=X_\ell\cap \bigcup_{i\in [t]}V(H_i)$ and for all $j\in [k]\setminus \{\ell\}$, let $X_j'=X_j\setminus \bigcup_{i\in [t]}V(H_i)$.  For all $i\in [k]$, we have $|X_i'|\geq \ep |V_i|$; however, by the construction of the sets $X_1',\dots, X_k'$, there is no edge in $G'[X_1'\cup \dots \cup X_k']$ which violates the hypothesis.

So we may assume that some component $H'$ of $G'$ intersects, say $V_1$, in at least $\ep |V_1|$ vertices. We will show that for each $i$, $|V(H')\cap X_i|\geq \ep |V_i|$. Indeed, let us assume for a moment that there exists some $j$ such that $|V(H')\cap X_j| < \ep |V_j|$ and define for each $i\in [k]$ the set $Y_i$ according to the following rule. If $|V(H')\cap X_i|\geq \ep |V_i|$, then set $Y_i=V(H')\cap X_i$; otherwise set $Y_i= X_i \setminus V(H')$.  Clearly,  $|Y_i| \ge \ep |V_i|$. By the hypothesis, there is an edge in the hypergraph $G'[Y_1, \dots, Y_k]$ induced by $\{Y_1, \dots, Y_k\}$. But this cannot happen since $Y_1 \subseteq V(H')$  and  $Y_j$ for $j\neq 1$ is disjoint from $V(H')$. 
Therefore, for each $i$, $|V(H')\cap X_i|\geq \ep |V_i|$.

Now let $H$ be the largest component of $G$ that contains $H'$.
Suppose to the contrary and without loss of generality that $|V(H)\cap V_k|<(1-\ep)|V_k|$.  But as there must be an edge in $G[V(H')\cap V_1, \dots, V(H')\cap V_{k-1}, V_k\setminus V(H)]$, this is a contradiction.  So $H$ is the desired component.  
\end{proof}

We will also make use of the following version of the Chernoff bound (see, e.g., inequality~(2.9) in~\cite{JLR}). For a binomial random variable $X$ and $\gamma\le 3/2$, we have
	\begin{equation}\label{eq:chernoff}
	\Pr(|X - \mathbf{E}(X)| \ge \gamma\mathbf{E}(X)) \le 2\exp\left( -\gamma^2 \mathbf{E}(X)/3\right).
	\end{equation}

\begin{proof}[Proof of Theorem~\ref{thm:main1}]
Let $r$, $k$, $\alpha$, and $\varphi = \varphi(r,k)$ be given. Set $\alpha^* = \alpha/2$.  Let $\epsilon^*$ and $t_0$ be the constants guaranteed by the values of $k$, $r$, and $\alpha^*$. Let $\ep < \min\{ 1/(2r), \alpha/(4 \varphi), \ep^*\}$. 
Thus, if $\Gamma$ is a $k$-uniform hypergraph on $t\geq t_0$ vertices with $e(\Gamma)\geq (1-\ep)\binom{t}{k}> (1-\ep^*)\binom{t}{k}$, then $\mc_r(\Gamma)\geq (\varphi-\alpha/2)t$. Let $H=(V,E)=\hnp$.
	
	First observe that for any fixed positive $\eta$ any sub-hypergraph $H'$ of $H$ is $(\eta,p,2)$-upper uniform. Indeed, let $U_1,\dots, U_k \subseteq V$ with $|U_1|\ge\dots\ge|U_k|\geq\eta n$ be given. Then the expected number of edges in $H$ having exactly one vertex in each $U_i$ is
$|U_1|\cdot\ldots\cdot |U_k| p \ge \eta^k n^k p$.
Thus \eqref{eq:chernoff}, applied with $\gamma=1$, implies that 
	\begin{align*}
	\Pr(e(U_1,\dots,U_k) \ge 2 |U_1|\cdot\ldots\cdot |U_k| p) &\le 2\exp\left( -|U_1|\cdot\ldots\cdot |U_k| p/3   \right) \\
	&\le 2\exp\left( -\eta^k n^kp / 3 \right) =  o(2^{-kn}),
	\end{align*}
since $pn^{k-1} \rightarrow \infty$.	Consequently, a.a.s. the number of edges in $H'$ is at most $2 |U_1|\cdot\ldots\cdot |U_k| p$. Finally, since the number of choices for $U_i$'s is trivially bounded from above by $(2^n)^k = 2^{kn}$, the union bound yields that a.a.s. $H'$ is $(\eta,p,2)$-upper uniform.

	Apply Theorem \ref{regularity} with $k$, $r$, $t_0$, $\ep$, as above, and $D=2$. Let $\eta$, $T_0$ and $N_0$ be the constants that arise and assume that $n\geq N_0$. Let $c$ be an $r$-coloring of the edges of $H$ and let $H_i$ be the subgraph of $H$ induced by the $i$th color, that means, $V(H_i)=V$ and $E(H_i)=\{e\in E\mid c(e)=i\}$. Then let $t$ be the constant guaranteed by Theorem \ref{regularity} for graphs $H_i$ and let $V_1\cup \ldots \cup V_t$ be the $(\epsilon,p)$-regular partition of $V$.
	
	Let $R$ be the $k$-uniform \emph{cluster graph} with vertex set $[t]$ where $\{i_1,\dots,i_k\}$ is an edge if and only if $V_{i_1},\dots,V_{i_k}$ form an $(\epsilon,p)$-regular $k$-tuple. Color $\{i_1,\dots,i_k\}$ in $R$ by a majority color in the $k$-partite graph $H[V_{i_1},\dots,V_{i_k}]$. Let $H'$ be the sub-hypergraph colored by this color in $H[V_{i_1},\dots,V_{i_k}]$. Observe that $d_{H',p}(V_{i_1},\dots,V_{i_k}) \ge 1/(2r)$. Indeed, the Chernoff bound~\eqref{eq:chernoff}, applied with $\gamma=1/2$, implies that a.a.s. $e(V_{i_1},\dots, V_{i_k}) \ge |V_{i_1}|\cdot\ldots\cdot |V_{i_k}| p/2$. Thus, $e_{H'}(V_{i_1},\dots, V_{i_k}) \ge |V_{i_1}|\cdot\ldots\cdot |V_{i_k}| p/(2r)$, as required.  Furthermore, since $\epsilon < 1/(2r)$, we also get that $d_{H',p}(V_{i_1},\dots,V_{i_k}) > \epsilon$.
	
Clearly, we also have $|E(R)|>(1-\epsilon)\binom{t}{k}$, so by the assumption, there is a monochromatic, say red, component of size at least $(\varphi-\alpha/2)t$
in the cluster graph. Let us assume that $\{i_{i_1},\dots,i_{i_k}\}$ is a red edge in $R$. Thus, $(V_{i_1},\dots,V_{i_k})$ is an $(\epsilon,p)$-regular $k$-tuple in~$H'$ and so
for all $k$-tuples $U_{i_1}\subseteq V_{i_1},\dots,U_{i_k}\subseteq V_{i_k}$ satisfying $|U_{i_1}|\cdot\ldots\cdot|U_{i_k}| \ge \epsilon |V_{i_1}|\cdot\ldots\cdot|V_{i_k}|$ we have
\[
|d_{H',p}(U_{i_1},\dots,U_{i_k})-d_{H',p}(V_{i_1},\dots,V_{i_k})|\leq\epsilon,
\]
which implies that 
\[
d_{H',p}(U_{i_1},\dots,U_{i_k}) > d_{H',p}(V_{i_1},\dots,V_{i_k}) - \epsilon\ge 1/(2r) - \epsilon > 0.
\]
Consequently, there exists an edge $u_{i_1}\dots u_{i_k}$ in $H'$ such that $u_{i_j}\in U_{i_j}$ for each $1\le j\le k$. Once again, this is true for any $U_{i_j}$'s satisfying $|U_{i_1}|\cdot\ldots\cdot|U_{i_k}| \ge \epsilon |V_{i_1}|\cdot\ldots\cdot|V_{i_k}|$. But the latter clearly implies that $|U_{i_j}| \ge \epsilon|V_{i_j}|$. Hence,  by Lemma \ref{lem:edgetocomp}
there exists a component in $H'$ that contains at least $(1-\ep)|V_{i_j}|$ vertices from each $V_{i_j}$. In other words, if $i_i$ is contained in a red edge in the cluster graph $R$, at least $(1-\ep)|V_i|$ vertices inside $V_i$ are in the same red component. Furthermore, since $(1-\ep)|V_i|>\frac{1}{2}|V_i|$, any two red edges that intersect in $R$ will correspond to red connected subgraphs that intersect in $H$, and thus are in the same red component in $H$.
    So, the $(\varphi-\alpha/2) t$ vertices in the largest monochromatic component in $R$ corresponds to a monochromatic component in $H$ of order at least
\begin{align*}
   (\varphi-&\alpha/2)t(1-\ep)|V_1| \geq(\varphi-\alpha/2)t(1-\ep)(1-\ep)\frac{n}{t}\\
   & = (\varphi-\alpha/2)(1-2\ep +\ep^2) n
   \ge (\varphi - \alpha/2 - 2\ep \varphi)n
    \geq(\varphi-\alpha)n,
\end{align*}
where the last inequality uses $\ep < \alpha/(4 \varphi)$.
\end{proof}

\subsection{Loose cycles}\label{sub:loose}

We first generalize a result explicitly stated by Letzter \cite[Corollary 2.1]{L15}, but independently proved implicitly by Dudek and Pra\l{}at \cite{DP15} and Pokrovskiy \cite{P14}.

\begin{lemma}\label{path in diamond}
	Let $H$ be a $k$-uniform $k$-partite graph with partite sets $X_1,\dots,X_k$ such that $|X_2|=\ldots=|X_{k-1}|=m$ and $|X_1|=|X_k|=m/2$ for some $m$. Then for all $0\leq \zeta\leq 1$ if there are no sets $U_1\subseteq X_1,\dots, U_k\subseteq X_k$ with $|U_1|=\ldots=|U_k|\geq\zeta m$ such that $H[U_1,\dots U_k]$ is empty, then there is a loose path on at least $(1-4\zeta)m-2$ edges.
\end{lemma}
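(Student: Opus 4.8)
The plan is to greedily build a long loose path edge by edge, alternating the roles of $X_1$ and $X_k$ as the ``connector'' partite sets that supply the shared vertices between consecutive edges. Recall that in a $k$-uniform loose path the edges overlap in single vertices, and here we have the extra structure that $|X_1|=|X_k|=m/2$ while the interior classes $X_2,\dots,X_{k-1}$ have size $m$; the asymmetry is exactly what is needed so that the connector classes run out at roughly the same rate as the interior classes. Concretely, I would construct edges $e_1,e_2,\dots$ so that edge $e_j$ uses one fresh vertex from each of $X_2,\dots,X_{k-1}$, and the two ``endpoint'' vertices of $e_j$ lie in $X_1$ and $X_k$ — with the convention that $e_j$ and $e_{j+1}$ share exactly the vertex in $X_1$ (say) when $j$ is odd and the vertex in $X_k$ when $j$ is even, so that each vertex of $X_1$ and $X_k$ that gets used is the junction of (at most) two consecutive edges. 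This way, building $t$ edges consumes $t$ vertices from each interior class and about $t/2$ vertices from each of $X_1$ and $X_k$, which is why the two constraints $t\le m$ and $t/2\le m/2$ coincide.

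The heart of the argument is the extension step. Suppose we have built a loose path with $j$ edges whose current endpoint is a vertex $v$ in, say, $X_1$ (the $X_k$ case is symmetric). Let $A_i\subseteq X_i$ be the set of still-unused vertices in class $i$ for $i=2,\dots,k$, and let $A_1\subseteq X_1$ be the unused vertices of $X_1$ together with $v$ itself. If every $|A_i|$ is still at least $\zeta m$, then by hypothesis $H[A_1,\dots,A_k]$ is nonempty, so it contains an edge $f$; but I need $f$ to actually pass through $v$, not just through $A_1$. To force this I run the extension on the link structure: consider the $(k-1)$-uniform $(k-1)$-partite hypergraph on $A_2,\dots,A_k$ whose edges are the sets $S$ with $S\cup\{v\}\in E(H)$. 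If this link hypergraph is nonempty we get the desired edge through $v$; if it is empty, then no edge of $H$ uses $v$ together with vertices only from $A_2,\dots,A_k$, and I argue we may simply delete $v$ from the path (losing one edge) and retry — or, cleaner, I set up the greedy process so that at each step I choose the new edge to contain the \emph{previous} junction vertex, which is forced to exist because that junction vertex was itself chosen one step earlier as part of an edge of $H$ meeting the then-available classes.

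So the cleanest framing is: process the classes $A_2,\dots,A_k$ down to size just above $\zeta m$, and at the moment some class first drops to $\le\zeta m$ we stop. Since each successful extension uses exactly one vertex of each interior class, and the interior classes start at size $m$, we get at least $m-\zeta m-1$ successful steps before an interior class is exhausted; the connector classes $X_1,X_k$ of size $m/2$ can fall below $\zeta m$ first only if $m/2 - (\text{number of edges})/2 \le \zeta m$ roughly, i.e. after about $m(1-2\zeta)$ edges. Taking the binding constraint and being slightly careful about the loss from the at-most-one ``bad $v$'' event per parity and from integrality, the number of edges built is at least $(1-4\zeta)m-2$, as claimed; the $4\zeta$ (rather than $2\zeta$) and the additive $-2$ absorb the two connector-class constraints and the rounding. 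The main obstacle I anticipate is precisely this bookkeeping — arranging the alternation of which connector vertex is shared so that no class is depleted prematurely and so that the ``link is empty at $v$'' contingency costs only $O(1)$ edges rather than being a genuine failure — and making sure the hypothesis (emptiness of $H[U_1,\dots,U_k]$ only for sets of \emph{equal} size $\ge\zeta m$) is applied correctly, which may require padding the smaller available sets up to a common size before invoking it.
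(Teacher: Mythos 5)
Your plan is a greedy forward construction, and you correctly put your finger on the obstacle: the hypothesis controls large $k$-tuples of sets, not the link of a single vertex, so the current endpoint $v$ may have an empty link into the available classes and the hypothesis gives no contradiction. But neither of your proposed fixes actually closes this gap. The ``choose the new edge to contain the previous junction vertex, which is forced to exist because that junction vertex was chosen one step earlier as part of an edge meeting the then-available classes'' argument is circular: the edge that brought $v$ into the path uses vertices that are now on the path and hence unavailable, so its existence tells you nothing about whether a \emph{new} edge through $v$ into the remaining classes exists. The ``delete $v$ and retry'' idea is the right instinct --- it is depth-first-search backtracking --- but your assertion that it costs only $O(1)$ edges (``at most one bad-$v$ event per parity'') is unjustified and false: the process can back up arbitrarily many times, and a plain greedy count then gives no lower bound on the path length at all.

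The paper's proof handles this with a genuine DFS that tracks, per class, unexplored vertices $X_i^*$, rejected vertices $X_i'$, and the current path. The two ingredients your plan is missing are (i) the invariant that at every stage no edge of $H$ has one vertex in $X_1'$ and its other $k-1$ vertices in $X_2^*,\dots,X_k^*$ (and symmetrically for $X_k'$), which is what eventually makes the hypothesis applicable to the tuple $X_1', X_2^*, \ldots, X_k^*$; and (ii) the consideration of a stage where $|X_1'| = |X_1^*|$, at which point either $|X_1'|\ge\zeta m$ (contradicting the hypothesis via the invariant) or $|X_1'|<\zeta m$, whence $|P\cap X_1| = m/2 - 2|X_1'| > m/2 - 2\zeta m$ and $P$ has at least $m-4\zeta m - 2$ edges. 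The extra $2\zeta m$ loss relative to your naive budget is exactly the cost of the rejected vertices; without a mechanism to account for them, your plan does not yield the claimed bound.
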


\begin{proof}
The proof is based on the depth first search algorithm. A similar idea for graphs was first noticed by Ben-Eliezer, Krivelevich and Sudakov~\cite{BKS2012b, BKS2012}.

Let $\zeta$ be given. We will proceed by a depth first search algorithm with the restriction that the vertices of degree 2 in the current path are in $X_1\cup X_k$.  We will let $X_i^*$ be the current set of unexplored vertices in $X_i$. We will let $X_i'$ be the current set of vertices which were added to the path at some point, but later rejected. So, initially we have $X_i^* = X_i$ and $X_i'=\emptyset$. Start the algorithm by removing any vertex from $X_1^*$ and adding it to the path $P$.

Suppose $P$ is the current path. First consider the case where $P$ has been reduced to a single vertex, say without loss of generality $P=x_1\in X_1$. If there are no edges $\{x_1,\dots, x_k\}$ where $x_i\in X_i^*$ is unexplored for all $2\leq i\leq k$, then we move $x_1$ out of the path and back into $X_1^*$ and start the algorithm again by choosing an edge in $G[X_1^*,\dots,X_k^*]$ if we can and stopping the algorithm otherwise.  If there is an edge $\{x_1,\dots,x_k\}$ where $x_i\in X_i^*$ is unexplored for all $2\leq i\leq k$, then we add $\{x_1,\dots,x_k\}$ to $P$ and remove $x_i$ from $X_i^*$ for all $2\leq i\leq k$ and continue the algorithm from $x_k$.

Now assume $P$ has at least one edge.  Let the last edge of $P$ be $\{x_1, \dots, x_k\}$ and assume without loss of generality that $x_1$ has degree $1$ in $P$ (i.e. is the current endpoint of the path).  If there are no edges $\{x_1, y_2, \dots, y_k\}$ where $y_i\in X_i^*$ is unexplored for all $2\leq i\leq k$, then we move $x_i$ from the path to $X_i'$ for all $1\leq i\leq k-1$ and continue the algorithm from $x_k$.  If there is an edge $\{x_1, y_2, \dots, y_k\}$ where $y_i\in X_i^*$ is unexplored for all $2\leq i\leq k$, then we add $\{x_1, y_2, \dots, y_k\}$ to $P$ and remove $y_i$ from $X_i^*$ for all $2\leq i\leq k$ and continue the algorithm from $y_k$. 

Note that during every stage of the algorithm, there is no edge $\{x_1, \dots, x_k\}$ where $x_1\in X_1'$ and $x_i\in X_i^*$ for all $2\leq i\leq k$ and no edge $\{y_1, \dots, y_k\}$ where $y_k\in X_k'$ and $y_i\in X_i^*$ for all $2\leq i\leq k$.  We also know that at every stage of the algorithm, $|X_2'|=\ldots=|X_{k-1}'|=|X_1'|+|X_k'|$ since every time a vertex from either $X_1$ or $X_k$ gets rejected, so do vertices from each $X_i$, and at no step does a vertex from both $X_1$ and $X_k$ get rejected. Furthermore, 
%\begin{equation}\label{lem:alg:1}
\[
|X_2^*|=\ldots=|X_{k-1}^*|=|X_1^*|+|X_k^*|+1
\]
%\end{equation}
at every step where $P\neq \emptyset$ since each time a vertex  is removed from any $X_i^*$, for $2\leq i\leq k-1$, exactly one vertex from $X_1^*\cup X_k^*$ is removed as well, or one vertex is added to $X_1^*\cup X_k^*$ (when $P$ is a single vertex that cannot be extended) then two are removed, with the  exception of when the initial edge is selected. Note that at each step $X_i=X_i^*\cup X_i'\cup (P\cap X_i)$ where the unions are disjoint.

Notice that this algorithm cannot stop while each $|X_i^*|\geq \zeta m$, since otherwise the $X_i^*$ sets would violate the hypothesis. %Thus we may assume that the algorithm runs at least until there exists some $|X_i^*|<\zeta m$.

After each step of the algorithm, either the difference between $|X_1^*|$ and $|X_1'|$ decreases by 1, or the difference between $|X_k^*|$ and $|X_k'|$ decreases by 1. So there is a stage in the algorithm where either $|X_1'|=|X_1^*|$ or $|X_k'|=|X_k^*|$ but not both.  Suppose without loss of generality, we are at a stage where $|X_1'|=|X_1^*|$ and $|X_k'|<|X_k^*|$. Since $P$ always contains almost the same number of vertices in $X_1$ as it does vertices in $X_k$ (off by at most one), we have that the sums $|X_1^*|+|X_1'|$ and $|X_k^*|+|X_k'|$ differ by at most 1. This yields that 
\[
2|X_1^*| = |X_1^*| + |X_1'| \le |X_k^*| + |X_k'| +1 < 2|X_k^*|+1
\]
and so $|X_1^*| \le |X_k^*|$. Thus, $|X_2^*|=\ldots=|X_{k-1}^*|\ = |X_1^*| + |X_k^*|+1    \geq 2|X_1^*|$ for all $1\leq i\leq k$. Thus, if $|X_1'|\geq \zeta m$, and consequently, each $|X_i^*| \ge |X_1^*|  = |X_1'| \ge \zeta m$, we are done since $H[X_1',X_2^*,\dots,X_k^*]$ has no edges, a contradiction. Otherwise, if $|X_1'| < \zeta m$, then  
\[
|P\cap X_1| = |X_1| - |X_1^*| - |X_1'| = m/2 - 2|X_1'|  \ge m/2 -2\zeta m.
\]
But by our construction, each vertex in $P\cap X_1$ corresponds to two edges in the path, except at most two vertices, so we have that there are at least $m-4\zeta m -2$ edges in $P$.
\end{proof}

For a graph $G$, and hypergraph $F$ with $V(G)\subseteq V(F)$, we say $F$ is a \emph{Berge-$G$} if there is a bijection $f:E(G)\to E(F)$ such that $e\subseteq f(e)$ for all $e\in E(G)$.  A Berge path $(E_1,\dots,E_{\ell-1})$ is a \emph{Berge-$P_\ell$} where $P_\ell=(e_1,\dots,e_{\ell-1})$, and $E_i=f(e_i)$. We say $F$ contains a Berge-$G$ if $F$ contains a sub-hypergraph that is a Berge-$G$.

Note that a connected hypergraph has the property that between any two vertices, there is a Berge path.

Let $P=(E_1,\dots, E_\ell)$ be a Berge path in the cluster graph $R$ of $H$.  Assume that $P$ contains no shorter Berge path that connects the two endpoints of $P$, which we will call $V_1$ and $V_s$. Since there is no shorter Berge path, any two nonconsecutive edges $E_i$ and $E_j$ must be disjoint, and so there is a labeling $V_1, \ldots, V_s$ of the vertices of $V(P)$ such that $E_1 = \{V_1, \ldots, V_k\}$, $E_\ell = \{V_{s-k+1}, \ldots, V_s\}$, and for each $i$ all the vertices of $E_i \setminus E_{i+1}$ come before the vertices of $E_i \cap E_{i+1}$, which come before the vertices of $E_{i+1} \setminus E_i$ (see Figure~\ref{fig:berge}). 

\begin{figure}
\centering
\includegraphics[scale = 0.7]{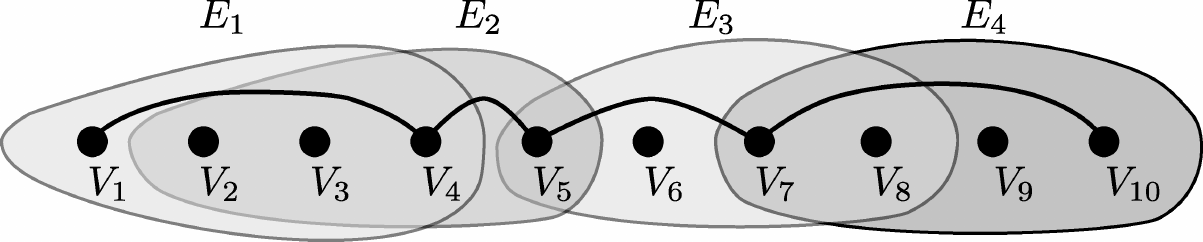}
\caption{A Berge-$P_4$ path $P=(E_1,E_2,E_3,E_4)$ between $V_1$ and $V_{10}$ with $P_4 = (  \{V_1,V_4\}, \{V_4,V_5\}, \{V_5,V_7\}, \{V_7,V_{10}\} )$. Observe that $P$ contains no shorter Berge path between $V_1$ and $V_{10}$.}
\label{fig:berge}
\end{figure}

\begin{lemma}\label{loose path between diamonds}
	Let $P=(E_1,\dots, E_\ell)$ be a Berge path in the cluster graph $R$ of $H$ with  $|V(R)|=m$ and edges of density $>\epsilon$.  Assume that $P$ contains no shorter Berge path that connects the two endpoints of $P$ and let $V_1,\dots,V_s$ be defined as above. Let $U_1\subset V_1,\dots, U_s\subset V_s$ be such that $|U_1|,|U_s|\geq \sqrt[k]{\epsilon}m$ and $|U_2|,\dots,|U_{s-1}|\geq 2\sqrt[k]{\epsilon}m$. 

Then, there is a loose path in $H$ that goes from a vertex in $V_1$ to a vertex in $V_s$ using only vertices from $\bigcup_{i=1}^s U_i$.
\end{lemma}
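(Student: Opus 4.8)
The plan is to blow up the Berge path $P$ into a loose path of $H$ one edge $E_i$ at a time, stitching consecutive pieces together at a single shared vertex, with Lemma~\ref{path in diamond} doing the work inside each window. First I would isolate the only consequence of regularity that is needed: each edge $\{V_{a},\dots,V_{a+k-1}\}$ of $R$ is an $(\epsilon,p)$-regular $k$-tuple of $H$ of density exceeding $\epsilon$, so whenever $W_{a+j}\subseteq V_{a+j}$ has size at least $\sqrt[k]{\epsilon}\,|V_{a+j}|$ for each $j$ --- which forces $\prod_j |W_{a+j}|\ge\epsilon\prod_j |V_{a+j}|$ --- the induced $k$-partite subgraph still has positive density and so contains an edge of $H$. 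As the sets $U_j$ comfortably exceed this threshold, this is exactly the ``no empty sub-tuple of relative size $\ge\sqrt[k]\epsilon$'' hypothesis of Lemma~\ref{path in diamond}, holding on the $k$-tuple of every edge of $R$, and --- a hyperedge being merely a set --- holding no matter which two of the $k$ clusters play the half-size parts $X_1,X_k$.

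Next I would exploit the shape of $P$. Since $P$ contains no shorter Berge path between its endpoints, nonconsecutive edges are disjoint, so in the labelling $V_1,\dots,V_s$ each $E_i$ is a block $\{V_{a_i},\dots,V_{a_i+k-1}\}$ of $k$ consecutive clusters with $a_1=1$, $a_\ell=s-k+1$, consecutive blocks sliding forward and overlapping in a common suffix/prefix of between $1$ and $k-1$ clusters, and the front overlap $E_{i-1}\cap E_i$ and back overlap $E_i\cap E_{i+1}$ of any one window having total size at most $k$ and hence being disjoint; in particular every cluster lies in at most two windows. For $1\le i<\ell$ fix a cluster $V_{\mathrm{out}(i)}\in E_i\cap E_{i+1}$, set $V_{\mathrm{in}(i+1)}:=V_{\mathrm{out}(i)}$, and put $V_{\mathrm{in}(1)}:=V_1$, $V_{\mathrm{out}(\ell)}:=V_s$; then $V_{\mathrm{in}(i)}\ne V_{\mathrm{out}(i)}$ always. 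Finally split, in advance, each $U_j$ that lies in two windows into two disjoint reservoirs of nearly equal size, one for each of the two segments that will visit that cluster, each still large enough for the threshold above --- which is what the factor $2$ in the hypothesis buys --- whereas $V_1$ and $V_s$ lie in a single window and need no split, which is why the hypothesis drops that factor for them.

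Now I would build the loose path inductively. Suppose the path built so far has absorbed $E_1,\dots,E_i$, starts at a vertex of $V_1$, and ends at a vertex $u_i\in V_{\mathrm{out}(i)}$. Apply Lemma~\ref{path in diamond} --- via its depth-first construction, which lets one prescribe the starting vertex --- to the subgraph of $H$ on the clusters of $E_{i+1}$, with $V_{\mathrm{in}(i+1)}$ and $V_{\mathrm{out}(i+1)}$ as the two half-size parts, all sets drawn from the reservoirs allotted to this segment, and starting at $u_i$; after trimming at most one edge so that the far endpoint lies in $V_{\mathrm{out}(i+1)}$, this gives a loose path inside $E_{i+1}$ from $u_i$ to some $u_{i+1}\in V_{\mathrm{out}(i+1)}$, which I attach at $u_i$. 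The union is again a loose path: any cluster common to $E_i$ and $E_{i+1}$ is an overlap cluster, and on it the old part used one reservoir and the new piece the disjoint other one (the link cluster $V_{\mathrm{out}(i)}$ being met only in $u_i$), so the last old edge meets the first new edge in exactly $u_i$ and meets no later new edge. After $E_\ell$ this yields a loose path from a vertex of $V_1$ to $u_\ell\in V_{\mathrm{out}(\ell)}=V_s$ lying entirely in $\bigcup_j U_j$.

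The main obstacle is the bookkeeping the overlaps force. Consecutive windows can share up to $k-1$ clusters, each used by two segments --- one possibly treating it as a full part of Lemma~\ref{path in diamond} --- so one must set up the reservoirs beforehand, check that every subfamily fed to Lemma~\ref{path in diamond} still clears the $\sqrt[k]\epsilon$-relative-size threshold that activates the regularity input, and confirm the joins create no accidental second common vertex between consecutive edges. A related technical point: the depth-first search of Lemma~\ref{path in diamond} can be launched from a prescribed vertex only if that vertex has nonempty link into the current window, and since regularity forces all but an $\epsilon$-fraction of any cluster to be typical in this sense, discarding the atypical vertices in advance --- again affordable by the slack --- lets each segment start exactly where its predecessor stopped. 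Granting these verifications, the lemma follows by concatenating the outputs of Lemma~\ref{path in diamond}.
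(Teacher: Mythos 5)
Your approach is genuinely different from the paper's (you stitch together local Lemma~\ref{path in diamond} paths inside each window; the paper inducts on $\ell$ and produces many candidate endpoints), but it has two gaps that together are fatal, and they both stem from the same fact: the sets $U_j$ are right at the regularity threshold, not ``comfortably above'' it.

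First, Lemma~\ref{path in diamond} delivers nothing at this scale. You propose to apply it inside a window with the parts drawn from the reservoirs, i.e.\ with $|X_2|=\cdots=|X_{k-1}|$ on the order of $\sqrt[k]{\epsilon}\,m$. To verify the ``no empty $\zeta$-sub-tuple'' hypothesis you must be able to invoke $(\epsilon,p)$-regularity, which requires every tested subset to have size at least roughly $\sqrt[k]{\epsilon}\,m$. But a subset of relative size $\zeta$ inside a part of size about $2\sqrt[k]{\epsilon}\,m$ has absolute size $\zeta\cdot 2\sqrt[k]{\epsilon}\,m$, so you are forced to take $\zeta\geq 1/2$. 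Then the conclusion of Lemma~\ref{path in diamond} is a loose path on at least $(1-4\zeta)m'-2<0$ edges, i.e.\ nothing. The DFS lemma is useful when the parts are much larger than the regularity threshold (as in the proof of Theorem~\ref{thm:cycles1}, where $X_j$ is half or all of a cluster); here there is no slack for it to exploit.

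Second, you cannot prescribe the starting vertex $u_i$. Regularity gives set-level density control, not pointwise degree control. The set $Z\subseteq V_{\mathrm{in}}$ of vertices with no edge into a fixed family $U_{j_2},\dots,U_{j_k}$ of sets of size $\sqrt[k]{\epsilon}\,m$ can have size as large as $\sqrt[k]{\epsilon}\,m$ (regularity only rules out $|Z|\geq\sqrt[k]{\epsilon}\,m$, since that is when $|Z|\cdot\prod_{j\ge2}|U_{j}|\geq\epsilon m^k$). That is the same order as $|U_{\mathrm{in}}|$, not an ``$\epsilon$-fraction of the cluster,'' so ``discarding the atypical vertices in advance'' could remove all of $U_{\mathrm{in}}$ and certainly could remove the particular vertex $u_i$ the previous segment handed you. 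The paper's induction sidesteps exactly this: it never tries to continue from one prescribed vertex, but instead manufactures $\geq\sqrt[k]{\epsilon}\,m$ loose paths to $V_{s-k+1}$ with distinct endpoints (by repeatedly applying the inductive hypothesis with the used endpoint deleted) and then applies regularity once more to the endpoint \emph{set}, which is large enough to trigger it; whichever endpoint the resulting edge hits determines which path gets extended. Recovering your stitching plan would require either a pointwise-degree version of regularity (which is not available) or exactly this ``many endpoints, pick one afterwards'' device, at which point you have essentially rediscovered the paper's induction.
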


\begin{proof}
	Let us use induction on $\ell$. For $\ell=1$ then the statement follows from $\ep$-regularity. Now assume the statement holds for $\ell-1$ and consider such a path $P$ of length $\ell$. Note that by construction of the sequence of vertices $V_1, \ldots, V_s$, we have $V_{s-k+1}\in E_{\ell-1}\cap E_\ell$. Further, $P-E_\ell$ is a shortest Berge path connecting $V_1$ to $V_{s-k+1}$ and $U_1,\dots, U_{s-k+1}$ satisfy the requirements of the theorem, so by the inductive hypothesis, there is a loose path from $V_1$ to $V_{s-k+1}$ contained in $\bigcup_{i=1}^{s-k+1}U_i$. Let $v_1\in V_{s-k+1}$ be the last vertex in this path. If we replace $U_{s-k+1}$ with $U_{s-k+1}\setminus\{v_1\}$, then we can apply the inductive hypothesis again and find another loose path from $V_1$ to $V_{s-k+1}$ that does not use $v_1$. Since $|U_{s-k+1}|\geq 2\sqrt[k]{\epsilon}m$, we can repeat this process to find at least $\sqrt[k]{\epsilon}m$ loose paths from $V_1$ to $V_{s-k+1}$ which all have distinct endpoints in $V_{s-k+1}$. Let $X$ be the set of vertices in these at least $\sqrt[k]{\epsilon}m$ paths. Observe that $|X\cap U_s|=0$ since $P-E_\ell$ does not reach $U_s$, and for each $s-k+2\leq i\leq s-1$ we have that$|X\cap U_i|\leq \sqrt[k]{\epsilon}$ since each path hits at most one vertex from each cluster, so for all $s-k+2\leq i\leq s$, $|U_i\setminus X|\geq \sqrt[k]{\epsilon}m$ and $|U_{s-k+1}\cap X|\geq \sqrt[k]{\epsilon}m$. So there is an edge in $H[U_{s-k+1}\cap X,U_{s-k+2}\setminus X,\ldots,U_s\setminus X]$. This edge, along with whichever path from $V_1$ to $V_{s-k+1}$ is incident to it gives us the desired loose path from $V_1$ to $V_s$.
\end{proof}

Now we prove the main result of this section.  

\begin{proof}[Proof of Theorem \ref{thm:cycles1}]

Let $r$, $k$, $\alpha$, and $\psi = \psi(r,k)$ be given. Set $\alpha^* = \alpha/2$.  Let $\epsilon^*$ and $t_0$ be the constants guaranteed by the values of $k$, $r$, and $\alpha^*$. Let $\ep < \min\{ 1/(2r), (\alpha/(14 \psi))^k, \ep^*\}$. 
Thus, if $\Gamma$ is a $k$-uniform hypergraph on $t\geq t_0$ vertices with $e(\Gamma)\geq (1-\ep)\binom{t}{k}> (1-\ep^*)\binom{t}{k}$, then $\Gamma$ contains a monochromatic connected loose cycle packing on at least $(\psi-\alpha/2)t$ vertices for every $r$-edge-coloring.

Apply Theorem \ref{regularity} with $k$, $r$, $t_0$, $\ep$, as above, and $D=2$.  Let $\eta$, $T_0$ and $N_0$ be the constants that arise. Let $n\geq N_0$ be large. Let $H=\mathcal{H}^{(k)}(n,p)$. Color the edges of $H$ with $r$ colors. Let $H_i$ be the sub-hypergraph of $H$ colored $i$ for $1\leq i\leq r$. 
As in the proof of Theorem~\ref{thm:main1} it follows from the Chernoff bound~\eqref{eq:chernoff} that each $H_i$ is $(\eta,p,D)$-upper uniform. Then Theorem \ref{regularity} gives us that $H_1,\dots,H_r$ admits an $(\epsilon,p)$-regular partition.
	
	Let $R$ be the $k$-uniform cluster graph of $H$ with clusters $V_1,\dots,V_t$ each of size $m=n/t$. Color each edge $\{V_{i_1},\dots,V_{i_k}\}$ of $R$ with a majority color in the $k$-partite graph $H[V_{i_1},\dots,V_{i_k}]$. Let $H'$ be the sub-hypergraph colored by this majority color in $H[V_{i_1},\dots,V_{i_k}]$. Then again as in the proof of Theorem~\ref{thm:main1} we have $d_{H',p}(V_{i_1},\dots,V_{i_k}) \ge \epsilon$.
	
	Since $t\geq t_0$, and $|E(R)|\geq (1-\epsilon)\binom{t}{k}$, $R$ contains a monochromatic connected loose cycle packing, $C_1,\dots,C_{\ell}$, on at least $(\psi-\alpha/2)t$ clusters. Let $f = \sum_{i=1}^\ell |E(C_i)| \geq (\psi-\alpha/2)t/(k-1)$ be the number of edges in the packing. Let $E_1,\dots, E_f$ be an enumeration of these edges. There is a bijection between the edges in the cycle packing and the vertices of degree $2$ in the packing. We may assume without loss of generality that $V_i$ is the vertex corresponding to $E_i$ for each $1\leq i\leq f$. Partition $V_i$ into two equal sized sets $V_{i,1}$ and $V_{i,2}$ for each $1\leq i\leq f$.
	
	Fix $1\le i\leq f$ for a moment. Let $V_j\neq V_i$ be the second vertex of degree $2$ in $E_i$. Set $X_1=V_{i,1}$, $X_k=V_{j,2}$ and let $X_2,\dots,X_{k-1}$ be the clusters in $E_i\setminus\{V_i,V_j\}$. Set $\zeta=\sqrt[k]{\epsilon}$. Then Lemma~\ref{path in diamond} gives us that there is a loose path, call it $P_i$, on exactly $(1-4\sqrt[k]{\epsilon})m-2$ edges in $H'[X_1,\dots,X_k]$. We can do this for each $1\le i\leq f$ to find $f$ loose paths that we call \emph{long}. Let $U_{i,1}$ and $U_{i,2}$ be the first and last $\sqrt[k]{\epsilon}m$ vertices along $P_i$ in $V_i\cap V(P_i)$ respectively. 

	For each $V_i$, let $\tau(V_i)$ denote the set of vertices in $V_i$ that are not in any of the $f$ long paths. Notice that if $V_i$ is in the cycle packing, $\tau(V_i)\ge 4\sqrt[k]{\epsilon}m $ by the length of each $P_i$ and if $V_i$ is not in the cycle packing, $\tau(V_i)=|V_i| = m \ge 4\sqrt[k]{\epsilon}m$. Let $\mathcal{L}$ be the monochromatic component of $R$ containing the cycle packing. 
	
We now show how to find a loose path that connect the end of the path $P_i$ to the beginning of $P_{i+1}$ for each $1\leq i\leq f$.
Assume we have found vertex disjoint loose paths $Q_1,\dots,Q_{i-1}$ in $H'$ such that $Q_j$ connects a vertex in $U_{j,2} \subseteq V_{j}$ with a vertex in $U_{j+1,1}\subseteq V_{j+1}$ and does not intersect any other vertices in $\bigcup_{i=1}^{f}V(P_i)$. Furthermore, let $\mathcal{V}_{i-1} = \bigcup_{j=1}^{i-1} V(Q_j)$ and assume that $|\mathcal{V}_{i-1}\cap \tau(V_j)|\leq 2(i-1)$ for each $1\le j\le t$. Now we will construct a path $Q_i$ as follows.

Let $\Pi$ be the shortest Berge path from $V_i$ to $V_{i+1}$ in $\mathcal{L}$. Let $s$ be the number of clusters in $\Pi$. Let $V_{j_1}=V_i$ and $V_{j_s}=V_{i+1}$ and let $V_{j_2},\dots,V_{j_{s-1}}$  be the other $s-2$ clusters in $\Pi$. Let $U_{j_1}=U_{i,2}$, $U_{j_s}=U_{i+1,1}$ and $U_{j_2}=\tau(V_{j_2})\setminus\mathcal{V}_{i-1},\dots,U_{j_{s-1}}=\tau(V_{j_{s-1}})\setminus\mathcal{V}_{i-1}$. Then we have $|U_{j_1}|,|U_{j_s}|\geq \sqrt[k]{\epsilon}m$ (by definition of $U_{i,2}$ and $U_{i+1,1}$) and for each $1\le \ell \le s-1$,
\[
|U_{j_\ell}| \ge \tau(V_{j_{\ell}}) - 2f \ge 4\sqrt[k]{\epsilon}m - 2\sqrt[k]{\epsilon}m = 2\sqrt[k]{\epsilon}m,
\]
since $f \le \sqrt[k]{\epsilon}m$ for sufficiently large $n$ (and so $m$).
Hence, by Lemma \ref{loose path between diamonds} we have a loose path $Q_i$ from $U_{i,2}$ to $U_{i+1,1}$ that is disjoint from each short loose path found previously and that does not intersect any of the long paths except $P_i$ and $P_{i+1}$ at a single vertex in $U_{i,2}$ and $U_{i+1,1}$, respectively. Finally, observe that for $\mathcal{V}_{i} = \bigcup_{j=1}^{i} V(Q_j)$ we still have that 
\[
|\mathcal{V}_{i}\cap \tau(V_j)| = |\mathcal{V}_{i-1}\cap \tau(V_j)| + |V(Q_i)\cap \tau(V_j)| \le 2(i-1)+2=2i
\]
for each $1\le j\le t$, since if $Q_i$
contained three or more vertices from a single cluster $V_j$, the corresponding Berge path $\Pi$ would not have been a shortest Berge path.

Notice then these short paths $Q_i$'s together with the long paths $P_i$'s give us a long loose cycle, $C$. By our choice for each $U_{i,1}$ and $U_{i,2}$, our cycle uses all the edges from each $P_i$ except at most $\sqrt[k]{\epsilon}m$ from each the beginning of the path and the end. There are $f$ long paths, so the total number of edges in our cycle is at least
\[
|E(C)|\geq\sum_{i=1}^f(|E(P_i)|-2\sqrt[k]{\epsilon}m)\geq f(1-7\sqrt[k]{\epsilon})m\geq \frac{(\psi-\alpha/2)t}{k-1}(1-7\sqrt[k]{\epsilon})m\geq\frac{\psi-\alpha}{k-1}n,
\]
where the last inequality uses $\ep < (\alpha/(14 \psi))^k$.
This immediately implies that $|V(C)|\geq (\psi-\alpha)n$, as required.
\end{proof}

The following result was proved by Gy\'arf\'as, S\'ark\"ozy, and Szemer\'edi \cite{GSS2008diamond}.  

\begin{theorem}[\cite{GSS2008diamond}]\label{diamonds}
	Suppose that $k$ is fixed and the edges of an almost complete $k$-uniform hypergraph on $n$ vertices are $2$-colored. Then there is a monochromatic connected diamond matching $cD_k$ such that $|V(cD_k)| =  (1+o(1))\frac{2k-2}{2k-1}n$.
\end{theorem}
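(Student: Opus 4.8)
The plan is to follow the argument of Gy\'arf\'as, S\'ark\"ozy and Szemer\'edi: pass to a cluster hypergraph via weak regularity, find there a large \emph{monochromatic connected matching}, and blow up each matching edge to a near-spanning diamond matching inside the corresponding $k$-tuple of clusters. Throughout, ``almost complete'' means $e(G)\ge(1-\epsilon)\binom{n}{k}$ for a suitably small $\epsilon=\epsilon(k,\eta)$, and the goal is a monochromatic connected diamond matching on at least $(1-\eta)\tfrac{2k-2}{2k-1}n$ vertices; the $(1+o(1))$ then refers to this lower bound together with the matching-colouring upper bound mentioned at the end.

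\emph{Regularity reduction.} Apply Theorem~\ref{regularity} with $r=2$, $p=1$, $D=1$ to the two colour classes $G_1,G_2$ of $G$; upper-uniformity is automatic since every density with respect to $p=1$ is at most $1$. This yields an equipartition $V_1,\dots,V_t$ with $t_0\le t\le T_0$ such that all but $\epsilon\binom{t}{k}$ of the $k$-tuples of clusters are $(\epsilon,1)$-regular in both colours. Since $e(G)\ge(1-\epsilon)\binom{n}{k}$, all but a $\sqrt{\epsilon}$ fraction of the $k$-tuples of clusters have total density at least $1-\sqrt{\epsilon}$, so all but an $(\epsilon+\sqrt{\epsilon})$ fraction are $(\epsilon,1)$-regular in both colours \emph{and} have, in their majority colour, density at least $1/3$. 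Let $R$ be the $2$-coloured $k$-uniform cluster hypergraph on $[t]$ obtained by keeping exactly these tuples and colouring each by its majority colour; then $R$ is almost complete, and each edge of $R$ is a $k$-tuple of clusters that is $(\epsilon,1)$-regular and has density $\ge1/3$ in the colour of that edge.

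\emph{The combinatorial core.} The heart of the proof is that every $2$-colouring of an almost complete $k$-uniform hypergraph on $t$ vertices contains a \emph{monochromatic connected matching} --- pairwise disjoint edges, all of one colour, lying inside a single monochromatic connected sub-hypergraph of that colour --- on at least $(1-o(1))\tfrac{2k-2}{2k-1}t$ vertices. For $k=2$ this is the classical fact that a $2$-coloured $K_t$ has a monochromatic connected matching covering $(1-o(1))\tfrac23 t$ vertices; for $k=3$ it is due to~\cite{HLPRRSS2006}, and for general $k$ to~\cite{GSS2008diamond}. I would prove it through the monochromatic component structure (a $2$-coloured complete $k$-graph has a red component and a blue component whose union is the whole vertex set) combined with a matching-augmentation argument: a maximum monochromatic connected matching, taken inside its component, can be augmented by a fresh edge on the uncovered vertices unless it already covers a $\tfrac{2k-2}{2k-1}$ fraction, and the blocking configuration (one vertex per would-be diamond that cannot be absorbed) pins down the constant. \textbf{This is the step I expect to be the main obstacle}: already the graph case requires care with the component structure, and in the hypergraph case one must control precisely how a new edge can be spliced into a matching inside its monochromatic component.

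\emph{Lifting back to $G$.} Take a monochromatic, say red, connected matching $M=\{F_1,\dots,F_m\}$ of $R$ with $km\ge(1-o(1))\tfrac{2k-2}{2k-1}t$, lying in a red component $\mathcal{C}$ of $R$. Fix $F_j=\{V_{i_1},\dots,V_{i_k}\}$: this $k$-tuple is $(\epsilon,1)$-regular of red density $\ge1/3$, and a routine greedy embedding produces a red diamond matching inside it covering all but an $o(1)$ fraction of its $k\cdot(n/t)$ vertices. (Choose the ``shared pair'' clusters of successive diamonds cyclically among $\{i_1,i_2\},\{i_2,i_3\},\dots,\{i_k,i_1\}$ so that every cluster loses vertices at the same rate; to add a diamond with shared pair in $V_a\times V_b$, regularity gives a red edge on the free vertices, hence a pair $\{x,y\}$ with $x\in V_a$, $y\in V_b$ lying in a positive density of red edges on the free vertices of the remaining clusters, and such a positive-density family of $(k-2)$-tuples contains two vertex-disjoint members; this continues while every cluster has $\ge\epsilon(n/t)$ free vertices, so it stops having covered $(1-O(\epsilon))k(n/t)$ vertices.) After first reserving a $\sqrt{\epsilon}$ fraction of every cluster, Lemma~\ref{lem:edgetocomp} applied inside each regular $k$-tuple shows that the red sub-hypergraph of $G$ spanned by the cluster ``cores'' is connected over all clusters of $\mathcal{C}$ (adjacent cluster-edges share a cluster, so their core components overlap there), and it contains every diamond we packed. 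Diamonds from distinct $F_j$ are vertex-disjoint, so altogether they cover at least $(1-o(1))\,m\cdot k\cdot(n/t)=(1-o(1))(km)(n/t)=(1-o(1))\tfrac{2k-2}{2k-1}n$ vertices; choosing $t_0$ and then $n$ large makes this at least $(1-\eta)\tfrac{2k-2}{2k-1}n$, as required. (For the complementary upper bound, a colouring of $K_n^k$ built from a partition of $V$ into $2k-1$ nearly equal parts shows that no monochromatic component admits a diamond matching on more than $(1+o(1))\tfrac{2k-2}{2k-1}n$ vertices, which is why the constant $\tfrac{2k-2}{2k-1}$ cannot be improved.)
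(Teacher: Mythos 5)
First, a framing point: the paper does not actually prove Theorem~\ref{diamonds}; it is imported verbatim from \cite{GSS2008diamond} so that it can be combined with Theorem~\ref{thm:cycles1} to yield Corollary~\ref{cor:cycles}. There is therefore no in-paper proof to compare against, and what follows assesses your proposal on its own merits.

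Your ``combinatorial core'' is false for every $k\geq 3$, and this breaks the entire reduction. You assert that every $2$-colouring of an almost complete $k$-uniform hypergraph on $t$ vertices contains a monochromatic connected \emph{matching} (pairwise disjoint $k$-edges) covering $(1-o(1))\tfrac{2k-2}{2k-1}t$ vertices. Consider the complete $K_t^{(k)}$ on a vertex set $V$ with a fixed $S\subseteq V$ of size $\tfrac{k}{k+1}t$, where edges inside $S$ are coloured red and all other edges blue. A red matching lies inside $S$, hence covers at most $\tfrac{k}{k+1}t$ vertices; every blue edge meets $V\setminus S$, so a blue matching has at most $|V\setminus S|=\tfrac{t}{k+1}$ edges and also covers at most $\tfrac{k}{k+1}t$ vertices. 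Since $\tfrac{k}{k+1}<\tfrac{2k-2}{2k-1}$ whenever $k\geq 3$, no monochromatic matching --- connected or otherwise --- attains the fraction your blow-up step requires. This is precisely why Gy\'arf\'as, S\'ark\"ozy and Szemer\'edi work with connected \emph{diamond} matchings already at the cluster level: in the same split colouring a blue diamond needs only one vertex of $V\setminus S$ (both of its edges may share it), so a connected diamond matching can occupy up to $(2k-2)|V\setminus S|$ vertices, and balancing the two colours gives the larger extremal ratio $\tfrac{2k-2}{2k-1}$. Your blow-up of a single cluster $k$-edge into a near-spanning diamond matching inside the corresponding $k$-tuple of clusters is sound, but it can only recover the fraction of clusters that the combinatorial core hands it; with a connected matching that fraction is at best $\tfrac{k}{k+1}$, yielding a final count of $(1-o(1))\tfrac{k}{k+1}n$, which falls short. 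Replacing the connected matching by a connected diamond matching in the cluster hypergraph would fix the arithmetic, but then you would be demanding exactly the GSS theorem inside the cluster hypergraph, which is circular. The structure that beats $\tfrac{k}{k+1}$ must be argued for directly, as GSS do.
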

\noindent
Combining Theorem \ref{diamonds} with Theorem \ref{thm:cycles1}, we immediately get a proof of Corollary \ref{cor:cycles}, which is best possible as observed in \cite{GSS2008diamond}.

\begin{observation}
There exists a $2$-coloring of the edges of the complete $k$-uniform hypergraph $K^{k}_n=(V,E)$ such that the longest loose cycle covers no more than $\frac{2k-2}{2k-1}n$ vertices.
\end{observation}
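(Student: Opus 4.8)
The plan is to exhibit the standard extremal $2$-coloring and check directly that it admits no long monochromatic loose cycle. Partition the vertex set as $V = A \cup B$ with $|A| = \frac{n}{2k-1}$ and $|B| = \frac{2k-2}{2k-1}n$ (assuming $2k-1$ divides $n$, or invoking the rounding convention of Section~2 in general). Color an edge $e$ \emph{red} if $e \subseteq B$, and \emph{blue} otherwise, that is, whenever $e \cap A \neq \emptyset$. I claim that every monochromatic loose cycle under this coloring spans at most $\frac{2k-2}{2k-1}n$ vertices.

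For a red loose cycle $C$ this is immediate: each red edge lies inside $B$, so $V(C) \subseteq B$ and $|V(C)| \le |B| = \frac{2k-2}{2k-1}n$. For a blue loose cycle $C$ I would exploit the structure of loose cycles. Write the edge set of $C$ as $\{f_1, \dots, f_j\}$, so that $|V(C)| = j(k-1)$; note that each $f_i$ meets $A$, and that every vertex of $C$ is incident to at most two of the $f_i$. Double counting the incidences between $\{f_1, \dots, f_j\}$ and $V(C) \cap A$ gives
\[
j \le \sum_{i=1}^{j} |f_i \cap A| = \sum_{a \in V(C) \cap A} \deg_C(a) \le 2\,|V(C) \cap A| \le 2|A|,
\]
and therefore $|V(C)| = j(k-1) \le 2|A|(k-1) = \frac{2k-2}{2k-1}n$, as required.

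I do not anticipate any real obstacle here: each of the two cases is a one-line counting argument. The only bookkeeping subtleties are the rounding needed when $2k-1 \nmid n$ (absorbed by the convention already adopted in the paper) and the degenerate convention for $k = 2$ that a single edge is a loose cycle on two vertices — which is harmless, since $2 \le \frac{2k-2}{2k-1}n$ for all $n$ under consideration.
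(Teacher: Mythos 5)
Your proof is correct and follows essentially the same approach as the paper: the paper chooses $S$ of size $\frac{2k-2}{2k-1}n$ (your $B$), colors edges inside $S$ red and the rest blue, and bounds the blue cycle length by observing each edge must hit a vertex outside $S$ (your $A$) and each such vertex lies in at most two edges. Your explicit double-counting of incidences is just a slightly more formal rendering of the paper's one-line count.
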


\begin{proof}
Choose any set $S$ of $\frac{2k-2}{2k-1}n$ vertices, and color all the edges completely inside $S$ red, and the rest of the edges blue. Clearly the longest red loose cycle has no more than $\frac{2k-2}{2k-1}n$ vertices. For the longest blue path, notice that each vertex in $V\setminus S$ can be in at most two edges, and each edge must contain one such vertex. Thus the longest possible cycle is of length $\frac{2}{2k-1}n$ or order $\frac{2k-2}{2k-1}n$.
\end{proof}

\section{$k$-uniform hypergraphs colored with $k$ colors}\label{sec:4colored}

\begin{theorem}\label{kunikcolors}
Let $k\geq 3$ and let $0<\ep< 16^{-k}$. Then there exists some $n_0$ such that for any $k$-uniform hypergraph $G$ on $n>n_0$ vertices with $e(G)>(1-\ep)\binom{n}{k}$, we have $\mc_k(G)\geq (1-8\sqrt[k]{\ep})n$.
\end{theorem}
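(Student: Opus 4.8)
The plan is to upgrade Gyárfás's proof of the exact equality $\mc_k(K^k_n)=n$ to a robust, quantitative statement. In that proof one observes that a $k$-colouring of $K^k_n$ with no spanning monochromatic component yields $k$ partitions $\mathcal P_1,\dots,\mathcal P_k$ of $V$ — the partition of $V$ into the components of colour~$i$ — each with at least two parts and with the key property that every $k$-subset of $V$ lies inside a single part of some $\mathcal P_i$ (an edge is monochromatic, and a monochromatic edge lies inside one component of its colour). A short greedy argument then produces a $k$-set contained in no single part of any $\mathcal P_i$: repeatedly pick a vertex lying outside the part (of some partition that does not yet ``separate'' the chosen vertices) containing the vertices chosen so far; after at most $k$ rounds every partition separates two of the chosen vertices, and any $k$-set through an appropriate sub-collection of them is uncaptured — a contradiction with the partitions covering all of $\binom Vk$.

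First I would do the routine cleaning: after deleting the $O(\sqrt\ep\, n)$ vertices of small degree, it suffices to treat a $k$-uniform $G$ in which \emph{every} vertex has link almost all of $\binom{V\setminus\{v\}}{k-1}$, which keeps the link of every vertex (and of every bounded set) almost complete. Now suppose for contradiction that $\mc_k(G)<(1-\beta)n$ with $\beta=8\sqrt[k]{\ep}$. Then the $k$ colour-component partitions have all parts of size $<(1-\beta)n$, every edge of $G$ still lies in a single part of some $\mathcal P_i$, and hence every $k$-subset contained in no single part of any $\mathcal P_i$ is a non-edge; so there are fewer than $\ep\binom nk$ such ``uncaptured'' $k$-subsets. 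The crux is to prove the opposite inequality — that $k$ partitions all of whose parts have size $<(1-\beta)n$ must leave at least $\ep\binom nk$ of the $k$-subsets uncaptured — which contradicts this and finishes the proof. I would establish it by induction on the uniformity/number of colours, reducing through vertex links: fixing a vertex $v$, the uncaptured $k$-subsets through $v$ are exactly the $(k-1)$-subsets of $V\setminus\{v\}$ that meet the complement of $P_i(v)$ for every $i$, each such complement having size $>\beta n$; fixing further vertices one localises to links of smaller uniformity, choosing each new vertex (by averaging) to lie outside as many of the current parts as possible, so that the number of partitions still in play shrinks. The base case $k=2$ is the familiar bipartite-connectivity argument for $2$-coloured almost-complete graphs, which produces a monochromatic component on all but a vanishing fraction of the vertices (of order the square root of that graph's edge-density deficiency). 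Since the relevant ``missing fraction'' only telescopes to about $\ep^{2/k}$ by the time the uniformity has been brought down to~$2$, the final square-root loss is $O(\sqrt[k]{\ep})$, and tracking the constants through the recursion, using the hypothesis $\ep<16^{-k}$ to keep everything in range, yields exactly $(1-8\sqrt[k]{\ep})n$.

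The main obstacle is precisely this counting of uncaptured $k$-subsets — the passage from ``no large monochromatic component'' to ``more non-edges than allowed.'' A direct union bound over the $k$ partitions is useless: it only forces $\beta=\Omega(1/k)$, far from $\beta=O(\sqrt[k]{\ep})$, because a single coarse partition can capture almost all $k$-subsets and $k$ of them can in aggregate cover everything. So one must genuinely exploit the recursive link structure and, in particular, control simultaneously three quantities as the recursion descends: the number of partitions still relevant, the uniformity, and the edge-density deficiency. The smallness $<(1-\beta)n$ of all parts is exactly what makes the first of these drop when the uniformity drops (via the averaging choice of pivot), and one must check that this drop keeps pace with the uniformity so the induction can be closed, while the deficiency does not blow up across the $\Theta(k)$ levels. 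Getting these to cooperate — and then optimising constants — is the heart of the argument; the cleaning step, the $k=2$ base case, and the bookkeeping for the links are comparatively standard.
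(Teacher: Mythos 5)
Your overall framework is the same as the paper's: if $\mc_k(G)<(1-\beta)n$ with $\beta=8\sqrt[k]{\ep}$, look at the $k$ colour-component partitions $\mathcal{P}_1,\dots,\mathcal{P}_k$; every edge of $G$ lies inside one part of some $\mathcal{P}_i$, so every ``uncaptured'' $k$-subset (one meeting two parts of every $\mathcal{P}_i$) is a non-edge; hence the number of uncaptured $k$-subsets is less than $\ep\binom{n}{k}$. You are also right that a naive union bound is useless. However, the crux that you name — proving that such partitions must leave at least $\ep\binom{n}{k}$ uncaptured $k$-subsets — is only gestured at, and the method you sketch does not go through without a further idea. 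Your characterization that the uncaptured $k$-sets through a vertex $v$ are exactly the $(k-1)$-subsets of $V\setminus\{v\}$ meeting each $W_i(v):=V\setminus P_i(v)$ is correct, but the recursion built on it breaks: there are legitimate configurations where, for a positive fraction of starting vertices $v$, the $k$ sets $W_1(v),\dots,W_k(v)$ are pairwise disjoint, each of size just over $\beta n$, so a $(k-1)$-subset cannot meet all $k$ of them and there are \emph{zero} uncaptured $k$-sets through $v$. (For instance, with $k=3$ split $V$ into four equal pieces $U_0,U_1,U_2,U_3$ and let $\mathcal{P}_i=\{U_i,\,V\setminus U_i\}$; then for every $v\in U_0$ the complements are the three disjoint sets $U_1,U_2,U_3$.) Choosing ``each new vertex to lie outside as many parts as possible'' does not rescue this — the choice of the first pivot is the issue, and you never address it. Your constant-tracking through the recursion (``telescopes to about $\ep^{2/k}$,'' ``base case $k=2$'') is asserted rather than derived, and it is far from clear the bookkeeping closes.

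The paper resolves the counting differently, with one decisive step your proposal lacks: it \emph{coarsens} each $\mathcal{P}_i$ to a two-part partition $\{A_i,B_i\}$, where both parts have size at least $4\sqrt[k]{\ep}\,n$ — possible exactly because no monochromatic component is large (group small components if necessary). Since one may freely swap the labels $A_i$ and $B_i$, one can arrange $|A_1\cap\dots\cap A_k|\ge 2\sqrt[k]{\ep}\,n/2^{k-2}$ and $|B_1\cap B_2|\ge 2\sqrt[k]{\ep}\,n$. Then every ordered tuple $(x_1,\dots,x_k)$ with $x_1\in A_1\cap\dots\cap A_k$, $x_2\in B_1\cap B_2$ and $x_i\in B_i$ for $i\ge 3$ intersects both $A_i$ and $B_i$ for every $i$, hence is uncaptured; there are at least $2^k\ep n^k+O(n^{k-1})$ of them, exceeding the $\ep n^k$ non-edge tuples allowed. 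No links, no induction. This coarsening-plus-product-set count is the missing idea; without it your plan is an outline of a plausible but unexecuted strategy, not a proof.
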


\begin{proof}
Let $G$ be a $k$-uniform hypergraph on $n$ vertices with $e(G)> (1-\ep)\binom{n}{k}$ and suppose that $\mc_k(G)< (1-8\sqrt[k]{\ep})n$. Let $G$ be $k$-colored in such a way that the largest monochromatic component of $G$ is of size $\mc_k(G)$.

First, for each $i\in [k]$, we  find a partition $\{A_i, B_i\}$ of $V(G)$ such that $(1-4\sqrt[k]{\ep})n\geq |A_i|, |B_i|\geq 4\sqrt[k]\ep n$ and there are no edges of color $i$ which are incident with both a vertex in $A_i$ and a vertex in $B_i$. To find such a partition, let $C_i$ be the largest component of color~$i$. By assumption $|V(C_i)|<(1-8\sqrt[k]{\epsilon})n<(1-4\sqrt[k]{\epsilon})n$. If $|V(C_i)|\geq 4\sqrt[k]{\epsilon}n$, then we can let $A_i=V(C_i)$ and $B_i=V(G)\setminus V(C_i)$. If $|V(C_i)|\leq 4\sqrt[k]{\epsilon}n$, then some union of components of color $i$, call it $C^*_i$, will have the property that $4\sqrt[k]\epsilon n\leq |V(C^*_i)| \le 4\sqrt[k]\epsilon n+4\sqrt[k]\epsilon n \leq n/2$,
since $\ep< 1/16^k$. (We can always choose $C_i^*$ such that $|V(C^*_i)| \ge 4\sqrt[k]\epsilon n$, since we treat vertices that are isolated with respect to color $i$ as separate components.) Here we can let $A_i=V(C^*_i)$ and $B_i=V(G)\setminus V(C^*_i)$.

We can also assume that $|A_1\cap A_2|\geq 2\sqrt[k]\ep n$ and $|B_1\cap B_2|\geq 2\sqrt[k]\ep n$. Indeed, if $|A_1 \cap A_2|<2\sqrt[k]\ep n$, then $|B_1 \cap A_2|>2\sqrt[k]\ep n$.  In this case, we can switch the roles of $A_2$ and $B_2$ and now we have $|B_1\cap B_2|>2\sqrt[k]\ep n$ and  $|A_1\cap A_2| = |A_1| - |A_1\cap B_2|>|A_1|-2\sqrt[k]\ep n\geq 2\sqrt[k]\ep n$. Similarly, if $|B_1\cap B_2|<2\sqrt[k]\ep n$, switching the roles of $A_2$ and $B_2$ gives us the desired inequalities. Thus we can always  assume that $|A_1\cap A_2|\geq 2\sqrt[k]\ep n$ and $|B_1\cap B_2|\geq 2\sqrt[k]\ep n$. 

 Furthermore, we can assume that $|A_1\cap A_2\cap \dots \cap A_k|\geq 2\sqrt[k]\ep n/2^{k-2}$. Indeed, we already have that $|A_1\cap A_2|\geq 2\sqrt[k]\ep n$. Now for a fixed $i$ satisfying $3\leq i\leq k$ assume that $|A_1\cap\dots\cap A_{i-1}|\geq 2\sqrt[k]{\ep}n/2^{i-3}$. Since $\{A_i\cap(A_1\cap \dots A_{i-1}), B_i\cap (A_1\cap \dots A_{i-1})\}$ is a partition of $A_1\cap \dots \cap A_{i-1}$, if $A_i$ does not cover at least half of vertices in $A_1\cap \dots \cap A_{i-1}$, then $B_i$ does, so switching the roles of $A_i$ and $B_i$ if necessary will give us that $|A_1\cap\dots\cap A_{i}|\geq 2\sqrt[k]{\ep}n/2^{i-2}$. Thus, we can recursively choose $A_i$ such that $|A_1\cap A_2\cap \dots \cap A_k|\geq 2\sqrt[k]\ep n/2^{k-2}$.

Now observe that the number of $k$-tuples of distinct vertices $(x_1, \dots, x_k)$ where $x_1\in A_1\cap A_2\cap \dots \cap A_k$, $x_2\in B_1\cap B_2$, and $x_i\in B_i$ for all $3\leq i\leq k$, is at least 
\begin{equation}\label{eq:ktuple}
2\sqrt[k]\ep n/2^{k-2}\cdot (2\sqrt[k]\ep n-1)\cdot (4\sqrt[k]\ep n-2)\cdots (4\sqrt[k]\ep n-k+1) =2^k \ep n^k +O(n^{k-1}).
\end{equation}
Note that any such $k$-tuple intersects every set $A_1, \dots, A_k, B_1, \dots, B_k$ and thus no such $k$-tuple can be an edge of $G$ since if it were an edge of $G$, it would be colored, say by color $i$, and thus cannot contain vertices from both $A_i$ and $B_i$. Since there are less than $\ep\binom{n}{k}$ non-edges in $G$, there are less than $\ep n^k$ ordered $k$-tuples of distinct vertices $(x_1, \dots, x_k)$ where $\{x_1, \dots, x_k\}$ is not an edge, which contradicts \eqref{eq:ktuple} for $n$ sufficiently large.
\end{proof}

\section{$k$-uniform hypergraphs colored with $k+1$ colors}\label{sec:5colored}

The following proof makes use of some technical lemmas which can all be found in Section~\ref{sec:lemmas}.

\begin{theorem}\label{kunik+1colors}
Let $k\geq 3$ and $0<\ep <\frac{1}{k^{5k}2^{8k}}$. If $G$ is a $k$-uniform hypergraph on $n$ vertices with $\delta(G)\geq (1-\ep)\binom{n-1}{k-1}$, then $\mc_{k+1}(G)\geq (\frac{k}{k+1}-\sqrt{k^k\ep})n$.
\end{theorem}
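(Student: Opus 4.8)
The plan is to mimic the structure of the proof of Theorem~\ref{kunikcolors}: assume $\mc_{k+1}(G) < (\frac{k}{k+1}-\sqrt{k^k\ep})n$ under an optimal $(k+1)$-coloring, and derive a contradiction by producing far more than $\ep\binom{n-1}{k-1}$ non-edges at some vertex (since the hypothesis now is a minimum-degree condition rather than an edge-count condition, the counting has to be localized to a single vertex's link). For each color $i\in[k+1]$, the first step is to extract from the color classes a partition of $V(G)$ into parts that ``block'' color~$i$; because the target fraction is $\frac{k}{k+1}$ rather than $1$, I would not just take the largest color-$i$ component but rather group the color-$i$ components into roughly $k+1$ buckets $P^i_1,\dots,P^i_{k+1}$, no single bucket being too large, so that no edge of color~$i$ meets two different buckets. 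This is the hypergraph analog of the Gy\'arf\'as partitioning argument behind Theorem~\ref{gthm1}(ii), and the technical lemmas alluded to in Section~\ref{sec:lemmas} are presumably exactly the tools that let one balance these buckets (keeping each of size at most about $\frac{n}{k+1}$, which uses $\ep$ small).

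Next I would run the intersection-refinement trick from the proof of Theorem~\ref{kunikcolors}: by repeatedly relabeling which bucket plays which role, one can find a choice of indices and a ``core'' set $A\subseteq V(G)$ of size at least $\Omega\!\left(\sqrt[k]{\ep}\,n / 2^{k}\right)$ (or whatever the lemmas deliver) such that $A$ lies in a prescribed bucket for every color simultaneously, and moreover one can locate, for each color~$i$, a second block $B_i$ of size $\Omega(\sqrt[k]{\ep}\,n)$ disjoint in bucket-index from $A$. Fix a vertex $v\in A$. The point is then to count $(k-1)$-tuples $S\in\binom{V(G)\setminus\{v\}}{k-1}$ that, for every color $i$, straddle two different color-$i$ buckets when combined with $v$; such an $S\cup\{v\}$ cannot be an edge of any color, hence is a non-edge at $v$. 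Choosing the $k-1$ coordinates of $S$ to land in the blocks $B_2,\dots,B_k$ (with $v$ already in the $A$-bucket for colors $1$ and $2$, and the blocks handling colors $3,\dots,k+1$) should give at least $c\cdot\ep^{(k-1)/k}n^{k-1}$ such tuples for an absolute constant $c$; with $\ep$ below the stated threshold $k^{-5k}2^{-8k}$ this exceeds $\ep\binom{n-1}{k-1}$, contradicting $\delta(G)\geq(1-\ep)\binom{n-1}{k-1}$.

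The main obstacle I expect is the combinatorial bookkeeping in the first two steps: with $k+1$ colors one needs the right number of buckets per color and a relabeling scheme that preserves a simultaneously-large core across all $k+1$ colors, and the loss factors (powers of $2^{k}$, the $2^{-(k-1)}$-type shrinkage at each refinement step, and the rounding when splitting components into near-equal buckets) all have to be tracked carefully enough to land inside the bound $\frac{k}{k+1}-\sqrt{k^k\ep}$ and the hypothesis $\ep<\frac{1}{k^{5k}2^{8k}}$ — this is exactly why the argument is broken out into separate technical lemmas rather than done inline. A secondary subtlety is that, unlike the $k$-color case where every color-$i$ partition had only two parts, here the existence of an $i$-monochromatic connected set of size $\ge\frac{k}{k+1}n$ would already contradict the assumption, so one must argue that the buckets can always be made both small \emph{and} bounded in number, which is where the minimum-degree condition (rather than a mere edge count) is used to rule out a vertex whose link is too sparse to support the partitioning.
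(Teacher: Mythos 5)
Your proposal takes a genuinely different route from the paper, and it has a gap that I do not think can be repaired without a new idea. You want to mimic the $k$-color proof (Theorem~\ref{kunikcolors}) by building, for each color $i\in[k+1]$, a partition of $V(G)$ into buckets no one of which is too large, with no color-$i$ edge crossing buckets, and then exhibiting many $(k-1)$-tuples through a fixed vertex $v$ that straddle all $k+1$ partitions. The trouble is that the threshold here is $\tfrac{k}{k+1}n$, not $(1-o(1))n$: under the negation hypothesis a color-$i$ component may still have size close to $\tfrac{k}{k+1}n$, so the only natural two-part split is (big component, complement) and there is no balanced $(k+1)$-bucket structure at all. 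Equivalently, each color only gives a bipartition $\{A_i,B_i\}$, and with $k+1$ colors but only $k$ coordinates in an edge, the intersection-refinement trick can leave you with $\bigcap_i A_i=\emptyset$ (this happens exactly in the extremal coloring, where the $B_i$ partition $V$), so the "core vertex plus blocks" counting does not get started. You flag this as "bookkeeping," but it is really the crux: the Gy\'arf\'as/F\"uredi transversal viewpoint you are invoking is precisely what the paper's Conclusion says does not extend straightforwardly to nearly complete hypergraphs.

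The paper's actual proof is an iterative overlap argument driven by a color-restriction observation that is absent from your plan. Let $C_1$ be the largest monochromatic component. For any $v\notin C_1$, no edge of $v$ into $C_1$ can carry the color of $C_1$, so the link of $v$ restricted to $C_1$ is a dense $(k-1)$-uniform hypergraph colored with only $k$ colors; applying the one-fewer-color results (Lemma~\ref{lem:1core}, Theorem~\ref{kunikcolors}) inside $C_1$ forces a second component $C_2$ with large overlap with $C_1$. One then shows, via Lemma~\ref{lem:1core_AB} applied to the split $\{C_1\cap C_2, C_1\setminus C_2\}$, that $|C_1\setminus C_2|$ must be tiny, so every $v\notin C_1$ lies in a monochromatic component covering almost all of $C_1$; a pigeonhole over the $k$ colors available between $C_1$ and its complement then gives $(n-|C_1|)/k$ such vertices in the same component, and solving the resulting inequality yields $|C_1|\ge\rbrac{\tfrac{k}{k+1}-\sqrt{k^k\ep}}n$. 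To turn your approach into a proof you would need a replacement for the step "$\bigcap_i A_i$ is large," and I do not see one; I would instead start from the largest component and use the drop from $k+1$ to $k$ colors when you restrict to it, as the paper does.
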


\begin{proof}
Suppose that the edges of $G$ are $(k+1)$-colored.  First note for all $v\in V(G)$, the link graph of $v$ is a $(k+1)$-colored $(k-1)$-uniform hypergraph on $n-1$ vertices with at least $(1-\ep)\binom{n-1}{k-1}$ edges and thus by Lemma \ref{lem:1core} (applied with $k-1$ in place of $k$ and $\ell=2$), we get a monochromatic $1$-core in the link graph of $v$ of order at least $(\frac{k-1}{k+1}-\sqrt{\ep})(n-1)$. Since all these vertices are connected via $v$, this gives us a monochromatic component containing $v$ of order at least $(\frac{k-1}{k+1}-\sqrt{\ep})(n-1)+1\geq (\frac{k-1}{k+1}-\sqrt{\ep})n.$ Now let $C_1$ be the largest monochromatic component in $G$, and note that by the calculation above, 
\[
|C_1|\geq \rbrac{\frac{k-1}{k+1}-\sqrt{\ep}}n.
\]

For ease of reading, set $\alpha=k^k\ep$. For any vertex $v\in V(G)\setminus C_1$, we have
\begin{equation}\label{eq:degtoC}
d(v, C_1)\geq \binom{|C_1|}{k-1}-\ep\binom{n-1}{k-1}\geq \rbrac{1-\frac{k\ep}{(\frac{k-1}{k+1}-\sqrt{\ep})^{k-1}}}\binom{|C_1|}{k-1}\geq (1-\alpha)\binom{|C_1|}{k-1},
\end{equation}
where the second inequality holds by Observation \ref{obs:binomcalc} (applied with $U=C_1$ and $\lambda=\frac{k-1}{k+1}-\sqrt{\ep}$) and the last inequality holds since $\frac{k-1}{k+1} - \sqrt{\ep} \geq  \frac 1k$. 

Now choose a monochromatic component $C_2$ so that $|C_1\cap C_2|$ is maximized. We claim that 
\[
|C_1\cap C_2|\geq \rbrac{\frac{k-1}{k}-\sqrt{\alpha}}|C_1|.
\]
To see this, let $v\in V(G)\setminus C_1$ and note that the link graph of $v$ restricted to $C_1$ is a $(k-1)$-uniform hypergraph on $|C_1|$ vertices with at least $(1-\alpha)\binom{|C_1|}{k-1}$ edges (by \eqref{eq:degtoC}).
Furthermore, note that the the link graph of $v$ restricted to $C_1$ only uses $k$ colors since the color of $C_1$ cannot show up by our choice of $v\not\in C_1$.
Thus by Lemma \ref{lem:1core} (with $k$ replaced by $k-1$ and $\ell=1$) there is a monochromatic component $C_2$ containing $v$ such that $|C_1\cap C_2|\geq \rbrac{\frac{k-1}{k}-\sqrt{\alpha}}|C_1|$, thus proving the claim.  

Now if $V\setminus C_1\subseteq C_2$, then we have 
$$|C_1|\geq |C_2|=n-|C_1|+|C_1\cap C_2|\geq n-|C_1|+\rbrac{\frac{k-1}{k}-\sqrt{\alpha}}|C_1|,$$
which implies
$$|C_1|\geq \frac{n}{\frac{k+1}{k}+\sqrt{\alpha}}\geq \rbrac{\frac{k}{k+1}-\sqrt{\alpha}}n;$$
and we are done, so suppose not.  

We now show that $|C_1\setminus C_2|$ must be small (in other words, $|C_1\cap C_2|$ must be big); more precisely, we will show that 
\begin{equation}\label{C1-C2}
|C_1\setminus C_2|\leq 8\sqrt[2(k-1)]{\alpha}|C_1|.  
\end{equation}
Let $v\in V(G) \setminus (C_1\cup C_2)$ and consider the link graph $L'$ obtained from $L(v, C_1)$ by deleting all edges entirely contained in $C_1 \setminus C_2$.

Notice that $L(v,C_1)$ is a $(k-1)$-uniform hypergraph with at least $(1-\alpha)\binom{|C_1|}{k-1}$ edges (by \eqref{eq:degtoC}). Furthermore, $L'$ is $(k-1)$-colored since it cannot contain any edges whose color matches $C_1$, and if there was an edge in $L(v,C_1)$ whose color matched $C_2$, it would need to be contained in $C_1\setminus C_2$, but such edges do not exist in $L'$. Thus by Lemma \ref{lem:1core_AB} (applied with $G'=L(v,C_1)$, $G=L'$, $A=C_1\cap C_2$ and $B=C_1\setminus C_2$) either there is a monochromatic component containing $v$ and at least 
$(1-8\sqrt[2(k-1)]{\alpha})|C_1|$ vertices of $C_1$, in which case \eqref{C1-C2} holds since $C_2$ was assumed to be the component that maximizes $|C_1\cap C_2|$, or there is a monochromatic component containing $v$ and at least 
\begin{equation}\label{C1capC2}
|C_1\cap C_2|-\sqrt{\alpha}|C_1|+\rbrac{\frac{k-2}{k-1}-\frac{\sqrt{k-1}\sqrt[4(k-1)]{\alpha}}{2^{k-2}}}|C_1\setminus C_2|
\end{equation}
vertices of $C_1$.  Observe that it cannot be the case that
\[
\rbrac{\frac{k-2}{k-1}-\frac{\sqrt{k-1}\sqrt[4(k-1)]{\alpha}}{2^{k-2}}}|C_1\setminus C_2|>\sqrt{\alpha}|C_1|,
\]
since this and \eqref{C1capC2} would imply that we have a monochromatic component containing $v$ and more than $|C_1\cap C_2|$ vertices of $C_1$, which contradicts the choice of $C_2$. So it must be that 
$\rbrac{\frac{k-2}{k-1}-\frac{\sqrt{k-1}\sqrt[4(k-1)]{\alpha}}{2^{k-2}}}|C_1\setminus C_2|\leq \sqrt{\alpha}|C_1|$, which implies
\begin{equation}\label{C_1setminusC_2}
|C_1\setminus C_2|\leq \frac{\sqrt{\alpha}|C_1|}{\frac{k-2}{k-1}-\frac{\sqrt{k-1}\sqrt[4(k-1)]{\alpha}}{2^{k-2}}}.
\end{equation}
We further claim that
\begin{equation}\label{C_1setminusC_2 bound}
\frac{\sqrt{\alpha}|C_1|}{\frac{k-2}{k-1}-\frac{\sqrt{k-1}\sqrt[4(k-1)]{\alpha}}{2^{k-2}}}<  8\sqrt[2(k-1)]{\alpha}|C_1|.
\end{equation}
Indeed, first note that since $\alpha<2^{-8k}$ and $k\ge 3$, we get $\sqrt[4(k-1)]{\alpha} < 2^{-2k/(k-1)} \le 1/8$, 
$\frac{\sqrt{k-1}}{2^{k-2}} \le 1$ and $\frac{k-2}{k-1} \ge 1/2$. Consequently,
\[
\frac{k-2}{k-1}-\frac{\sqrt{k-1}\sqrt[4(k-1)]{\alpha}}{2^{k-2}}
\ge \frac{1}{2} - \frac{1}{8} > \frac{1}{8},
\]
which together with $\sqrt{\alpha} \le \sqrt[2(k-1)]{\alpha}$ yields~\eqref{C_1setminusC_2 bound}. Thus by \eqref{C_1setminusC_2} and \eqref{C_1setminusC_2 bound},
\[
|C_1\setminus C_2|<8\sqrt[2(k-1)]{\alpha}|C_1|
\]
as desired.
 
Now that we have established \eqref{C1-C2}, we will show that any vertex in $V(G)\setminus C_1$ is in some monochromatic component that has large intersection with $C_1$. More precisely, for all $v\in V(G)\setminus C_1$, there exist some monochromatic component $C\ni v$ such that $|C\cap C_1|\geq (1-9\sqrt[2(k-1)]{\alpha})|C_1|$. Indeed, let $v\in V(G)\setminus C_1$. If $v\in C_2$, then \eqref{C1-C2} implies that $C=C_2$ suffices. Otherwise, 
as in \eqref{C1-C2} and \eqref{C1capC2} (with $C_2$ replaced by $C$) we get that there is a monochromatic component containing $v$ and at least 
\begin{align*}
|C_1\cap C| - \sqrt{\alpha}|C_1| &= |C_1| - |C_1\setminus C| - \sqrt{\alpha}|C_1|\\
&\ge (1-8\sqrt[2(k-1)]{\alpha}-\sqrt{\alpha})|C_1|\geq (1-9\sqrt[2(k-1)]{\alpha})|C_1|
\end{align*}
vertices of $C_1$. Also observe that since $(1-9\sqrt[2(k-1)]{\alpha})|C_1| > |C_1|/2$, any two such components~$C$ intersect with each
other.

Since there are only $k$ possible colors for edges between $C_1$ and $V(G)\setminus C_1$, there must exist at least $(n-|C_1|)/k$ vertices in $V(G)\setminus C_1$ that are all in some monochromatic component~$C^*$ together along with at least $(1-9\sqrt[2(k-1)]{\alpha})|C_1|$ vertices of $C_1$. Thus,

\[
|C_1|\geq |C^*|\geq (n-|C_1|)/k+(1-9\sqrt[2(k-1)]{\alpha})|C_1|,
\]
where the first inequality holds by the maximality of $C_1$.  So solving for $C_1$ we finally obtain that
\[
|C_1|\geq \frac{n}{k\left(\frac{1}{k}+9\sqrt[2(k-1)]{\alpha}\right)}>\left(\frac{k}{k+1}-\sqrt{\alpha}\right)n,
\]
where the final inequality follows from the upper bound on $\ep$ (and so on $\alpha$).
\end{proof}

\begin{corollary}\label{cor:degtoedges}
Let $k\geq 3$ and $0<\ep <\frac{1}{k^{10k+2}2^{16k+2}}$. If $G$ is a $k$-uniform hypergraph on $n$ vertices with $e(G)\geq (1-\ep)\binom{n}{k}$, then $\mc_{k+1}(G)\geq \rbrac{\frac{k}{k+1}-2k^{\frac{k+1}2}\sqrt[4]{\ep}}n$.
\end{corollary}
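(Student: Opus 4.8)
The plan is to reduce to Theorem~\ref{kunik+1colors} by converting the edge-density hypothesis into a minimum-degree hypothesis through a greedy cleaning step, at the cost of replacing $\ep$ by $\sqrt{\ep}$ and shrinking the vertex set slightly. Set $\delta=\sqrt{\ep}$. Starting from $G$, I would repeatedly delete a vertex whose degree in the current induced subhypergraph --- say on $m'$ vertices --- is strictly less than $(1-\delta)\binom{m'-1}{k-1}$, stopping when no such vertex remains. Let $G'$ be the resulting hypergraph and $m=|V(G')|$. By the stopping rule $\delta(G')\ge(1-\delta)\binom{m-1}{k-1}$, and since $\ep<\frac{1}{k^{10k+2}2^{16k+2}}$ forces $\delta=\sqrt{\ep}<\frac{1}{k^{5k}2^{8k}}$, Theorem~\ref{kunik+1colors} applies to $G'$ and yields
\[
\mc_{k+1}(G')\ \ge\ \Bigl(\tfrac{k}{k+1}-\sqrt{k^k\delta}\Bigr)m\ =\ \Bigl(\tfrac{k}{k+1}-k^{k/2}\sqrt[4]{\ep}\Bigr)m.
\]
Because $G'\subseteq G$, every $(k+1)$-coloring of $G$ restricts to one of $G'$ and monochromatic components only grow when we re-embed $G'$ into $G$; hence $\mc_{k+1}(G)\ge\mc_{k+1}(G')$.

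It remains to bound $m$ from below, which I would do by a charging argument. Let $v_1,v_2,\dots,v_{n-m}$ be the deleted vertices in order of deletion. When $v_i$ is removed the current hypergraph has $n-i+1$ vertices, so $v_i$ lies in more than $\delta\binom{n-i}{k-1}$ non-edges of the current hypergraph; each such non-edge is a $k$-set of $G$ all of whose vertices survived the first $i-1$ deletions, i.e.\ a non-edge of $G$ whose earliest-deleted vertex is exactly $v_i$. Thus distinct $v_i$ are charged disjoint families of non-edges of $G$, and summing over $i$ together with the hockey-stick identity $\sum_{j=m}^{n-1}\binom{j}{k-1}=\binom{n}{k}-\binom{m}{k}$ gives
\[
\ep\binom{n}{k}\ \ge\ \delta\left(\binom{n}{k}-\binom{m}{k}\right),
\]
so $\binom{m}{k}\ge(1-\ep/\delta)\binom{n}{k}=(1-\sqrt{\ep})\binom{n}{k}$ (in particular $\binom{m}{k}>0$, so $m\ge k$ and $G'$ is a genuine hypergraph). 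Since $\binom{m}{k}/\binom{n}{k}\le(m/n)^k$, this gives $m/n\ge(1-\sqrt{\ep})^{1/k}\ge 1-\sqrt{\ep}$.

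Combining the two displayed bounds,
\[
\mc_{k+1}(G)\ \ge\ \Bigl(\tfrac{k}{k+1}-k^{k/2}\sqrt[4]{\ep}\Bigr)(1-\sqrt{\ep})\,n\ \ge\ \Bigl(\tfrac{k}{k+1}-k^{k/2}\sqrt[4]{\ep}-\sqrt{\ep}\Bigr)n,
\]
and since $\ep<1$ gives $\sqrt{\ep}\le\sqrt[4]{\ep}$ while $k^{k/2}\ge 1$, the error term is at most $2k^{k/2}\sqrt[4]{\ep}\le 2k^{(k+1)/2}\sqrt[4]{\ep}$, which is exactly the claimed bound. (The extra factors $k^{2}$ and $4$ in the hypothesis on $\ep$, compared with the bare requirement $\ep<\frac{1}{k^{10k}2^{16k}}$ coming from Theorem~\ref{kunik+1colors}, leave ample room for all these crude estimates.)

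I do not anticipate a genuine difficulty here: this is a standard degree-cleaning reduction, so the only points requiring care are the bookkeeping in the charging step --- confirming that every non-edge of $G$ that gets counted is charged to a unique deleted vertex, so that the sum telescopes via the hockey-stick identity with no double counting --- and checking that the numerical hypothesis on $\ep$ is strong enough both to feed $\delta=\sqrt{\ep}$ into Theorem~\ref{kunik+1colors} and to absorb the final crude inequalities.
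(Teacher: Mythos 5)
Your proof is correct and follows the same high-level reduction as the paper: remove low-degree vertices to obtain a subhypergraph $G'$ with a minimum-degree guarantee, then invoke Theorem~\ref{kunik+1colors} on $G'$ and use $\mc_{k+1}(G)\ge\mc_{k+1}(G')$. The difference is in the cleaning step. The paper applies Observation~\ref{high degree} (a one-shot deletion of the at most $\sqrt{\ep}\,n$ vertices of degree below $(1-\sqrt{\ep})\binom{n-1}{k-1}$, followed by Observation~\ref{obs:binomcalc} to convert this into a relative degree bound), obtaining $|V'|\ge(1-\sqrt{\ep})n$ and $\delta(G')\ge(1-2k\sqrt{\ep})\binom{|V'|-1}{k-1}$. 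You instead run a greedy iterative deletion of vertices below a degree threshold relative to the current vertex count, and control the number of deletions via a charging argument and the hockey-stick identity; this gives the same $|V(G')|\ge(1-\sqrt{\ep})n$ but the cleaner threshold $\delta(G')\ge(1-\sqrt{\ep})\binom{m-1}{k-1}$, so you feed $\ep'=\sqrt{\ep}$ rather than $2k\sqrt{\ep}$ into Theorem~\ref{kunik+1colors} and in fact land on a slightly better constant, $k^{k/2}$ instead of $\sqrt{2}\,k^{(k+1)/2}$, which you then relax back to the stated bound. Your cleaning step is self-contained (not relying on Observations~\ref{high degree} and~\ref{obs:binomcalc}), at the cost of a slightly more involved bookkeeping argument; both approaches are sound and comfortably within the hypothesis on $\ep$.
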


\begin{proof}
Apply Observation \ref{high degree} to get a subgraph $G'=(V', E')$ with $|V'|\geq (1-\sqrt{\ep})n$ and $\delta(G')\geq (1-2k\sqrt{\ep})\binom{|V'|-1}{k-1}$.  Now by Theorem \ref{kunik+1colors},  we have
\[\mc_{k+1}(G)\geq\mc_{k+1}(G')\geq \rbrac{\frac{k}{k+1}-\sqrt{2}k^{\frac{k+1}2}\sqrt[4]{\ep}}(1-\sqrt{\ep})n\geq \rbrac{\frac{k}{k+1}-2k^{\frac{k+1}2}\sqrt[4]{\ep}}n.\]
\end{proof}

\section{Technical lemmas}\label{sec:lemmas}

The first two observations are simple counting arguments, but it is convenient for us to state them explicitly.

\begin{observation}\label{obs:binomcalc}
Let $n\geq k\geq 2$, let $0<\ep,  \lambda\leq 1$. If $|U|=\lambda n\geq k^2$, then $$\binom{|U|-1}{k-1}-\ep\binom{n-1}{k-1}\geq \rbrac{1-\frac{k\ep}{\lambda^{k-1}}}\binom{|U|-1}{k-1}.$$
\end{observation}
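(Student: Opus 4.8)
The plan is to estimate the ratio $\binom{n-1}{k-1}/\binom{|U|-1}{k-1}$ from above and show it is at most $k/\lambda^{k-1}$, from which the claimed inequality follows immediately after subtracting $\ep$ times the bound. First I would write
\[
\frac{\binom{n-1}{k-1}}{\binom{|U|-1}{k-1}} = \prod_{i=0}^{k-2} \frac{n-1-i}{|U|-1-i},
\]
so the task reduces to bounding this product of $k-1$ fractions. Since $|U|=\lambda n$ and $0<\lambda\le 1$, each factor has the form $\frac{n-1-i}{\lambda n - 1 - i}$ with $0\le i\le k-2$.

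The key step is to show $\frac{n-1-i}{\lambda n-1-i} \le \frac{1}{\lambda}\cdot\frac{1}{1-\delta}$ for a small error term $\delta$, or more simply to bound each factor by $\frac{n}{\lambda n - k} = \frac{1}{\lambda}\cdot\frac{1}{1-k/(\lambda n)}$. Using the hypothesis $|U|=\lambda n\ge k^2$, we get $k/(\lambda n)\le 1/k$, hence each factor is at most $\frac{1}{\lambda}\cdot\frac{k}{k-1}$. Therefore
\[
\frac{\binom{n-1}{k-1}}{\binom{|U|-1}{k-1}} \le \frac{1}{\lambda^{k-1}}\left(\frac{k}{k-1}\right)^{k-1} \le \frac{e}{\lambda^{k-1}} \le \frac{k}{\lambda^{k-1}},
\]
where the middle inequality is the standard bound $(1+\tfrac1{k-1})^{k-1}\le e$ and the last uses $k\ge 2 > e$... except $k$ could be $2$, so here I would instead argue more carefully: for $k=2$ the product is the single factor $\frac{n-1}{|U|-1}\le \frac{n}{\lambda n - 2}\le \frac{2}{\lambda}$ using $\lambda n\ge 4$, which is exactly $\frac{k}{\lambda^{k-1}}$; for $k\ge 3$ one has $(\tfrac{k}{k-1})^{k-1}\le (\tfrac32)^2\cdot$(decreasing)$\,\le e < k$, giving the bound. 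Either way we conclude $\binom{n-1}{k-1}\le \frac{k}{\lambda^{k-1}}\binom{|U|-1}{k-1}$.

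Finally, multiplying through by $-\ep$ and adding $\binom{|U|-1}{k-1}$ yields
\[
\binom{|U|-1}{k-1} - \ep\binom{n-1}{k-1} \ge \binom{|U|-1}{k-1} - \frac{k\ep}{\lambda^{k-1}}\binom{|U|-1}{k-1} = \left(1-\frac{k\ep}{\lambda^{k-1}}\right)\binom{|U|-1}{k-1},
\]
which is the statement. The only mild obstacle is handling the factor $(\tfrac{k}{k-1})^{k-1}$ uniformly in $k\ge 2$ cleanly; a one-line case split on $k=2$ versus $k\ge 3$, or simply using the cruder per-factor bound $\frac{n-1-i}{\lambda n-1-i}\le \frac{n}{\lambda n-k^2+1}\cdot$ something together with $\lambda n\ge k^2$, disposes of it without any real computation.
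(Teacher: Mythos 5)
Your proposal is correct and follows essentially the same route as the paper: both expand the ratio $\binom{n-1}{k-1}/\binom{|U|-1}{k-1}$ as a product of $k-1$ factors, bound each factor using $\lambda n\ge k^2$, and conclude the product is at most $k/\lambda^{k-1}$. The only real difference is the final estimate, where the paper invokes Bernoulli's inequality to get $\sqrt[k-1]{k}\ge 1+1/k$ uniformly in $k$, while you instead use $(k/(k-1))^{k-1}\le e$ and handle $k=2$ by a separate one-line computation (note your parenthetical claim that the sequence is ``decreasing'' is backwards --- $(1+1/m)^m$ increases to $e$ --- but this doesn't affect the argument since you only need the upper bound $e<k$ for $k\ge 3$).
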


\begin{proof}
First we show that $\lambda n\geq k^2$ implies that $\frac{n-(k-1)}{\lambda n-(k-1)}\leq \frac{\sqrt[k-1]{k}}{\lambda}$. By Bernoulli's inequality, $(1+x)^r \ge 1+xr$ for all real numbers $r\ge 1$ and $x\ge-1$ (see, e.g., inequality 58 in \cite{HLP1952}) we get that
\[
k^{k/(k-1)} = \left(1+(k-1)\right)^{k/(k-1)} \ge 1+k,
\]
and equivalently, $\sqrt[k-1]{k} \ge 1+1/k$. Thus,
\begin{align*}
\sqrt[k-1]{k} \ge 1 + \frac{1}{k} = 1 + \frac{k-1}{k(k-1)} &\ge 1 + \frac{k-1}{k^2 - k+1} \\
&\ge 1 + \frac{k-1}{\lambda n - k+1} = \frac{\lambda n}{\lambda n -(k -1)} \ge \frac{\lambda n - \lambda(k-1)}{\lambda n -(k -1)}.
\end{align*}
Now,
\begin{align*}
\binom{|U|-1}{k-1}-\ep\binom{n-1}{k-1}
&= \binom{|U|-1}{k-1}-\ep\binom{|U|-1}{k-1}\cdot \frac{\binom{n-1}{k-1}}{\binom{|U|-1}{k-1}}\\
&\geq \binom{|U|-1}{k-1}-\ep\binom{|U|-1}{k-1}\left(\frac{n-(k-1)}{\lambda n-(k-1)}\right)^{k-1} \\
&\geq  \rbrac{1-\frac{k\ep}{\lambda^{k-1}}}\binom{|U|-1}{k-1}.
\end{align*}
\end{proof}

\begin{observation}\label{high degree}
Let $k\geq 2$, let $0<\eta<1$, and let $0<\ep\leq (1-\sqrt[k-1]{1/2})^{1/(1-\eta)}$.  Let $G=(V,E)$ be a $k$-uniform hypergraph on $n$ vertices with $e(G)\geq (1-\ep)\binom{n}{k}$.  For all $U\subseteq V$ all but at most  $\ep^{1-\eta} n$ vertices have
\begin{equation}\label{obs:2:eq:1}
d(v, U)\geq \binom{|U|-1}{k-1}-\ep^{\eta}\binom{n-1}{k-1}.
\end{equation}
In particular, if $(1-\ep^{1-\eta})n \ge k^2$, then there exists an induced subgraph $G'=(V', E')$ with $|V'|\geq (1-\ep^{1-\eta})n$ and $\delta(G')\geq (1-2k\ep^\eta)\binom{|V'|-1|}{k-1}$.
\end{observation}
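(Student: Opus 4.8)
The plan is to establish the two assertions in turn. For the first, I would fix $U\subseteq V$ and, for each vertex $v$, compare $d(v,U)$ with its maximum possible value $\binom{|U|-1}{k-1}$ by bounding the deficiency $\binom{|U|-1}{k-1}-d(v,U)$ from above by the number $b(v)$ of non-edges $T$ of $G$ with $v\in T$ and $T\setminus\{v\}\subseteq U$. Verifying this bound is a short case analysis on whether $v\in U$, using that a $(k-1)$-set containing $v$ cannot extend to a $k$-edge (and that $\binom{|U|-1}{k-1}\le\binom{|U|}{k-1}$ when $v\notin U$). I would then double count incidences between vertices and non-edges: $\sum_{v\in V} b(v)=\sum_{T\notin E}\big|\{v\in T: T\setminus\{v\}\subseteq U\}\big|\le k\cdot(\text{number of non-edges})\le k\ep\binom{n}{k}$, since a given non-edge is counted exactly $k$ times if $T\subseteq U$, once if $|T\setminus U|=1$, and never otherwise. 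As $n\binom{n-1}{k-1}=k\binom{n}{k}$, a Markov-type estimate shows that fewer than $\frac{k\ep\binom{n}{k}}{\ep^\eta\binom{n-1}{k-1}}=\ep^{1-\eta}n$ vertices can have $b(v)>\ep^\eta\binom{n-1}{k-1}$, and every remaining vertex satisfies \eqref{obs:2:eq:1}.

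For the ``in particular'' clause, I would apply the first part with $U=V$, obtaining a set $V'$ of at least $(1-\ep^{1-\eta})n\ge k^2$ vertices, each with $d(v,V)\ge(1-\ep^\eta)\binom{n-1}{k-1}$, and take $G'=G[V']$. For $v\in V'$, passing to $G'$ destroys at most $\binom{n-1}{k-1}-\binom{|V'|-1}{k-1}$ of the $(k-1)$-sets through $v$, so
\[
d_{G'}(v,V')\ \ge\ d_G(v,V)-\left(\binom{n-1}{k-1}-\binom{|V'|-1}{k-1}\right)\ \ge\ \binom{|V'|-1}{k-1}-\ep^\eta\binom{n-1}{k-1}.
\]
It then suffices to show $\binom{n-1}{k-1}\le 2k\binom{|V'|-1}{k-1}$, which converts the right-hand side into $(1-2k\ep^\eta)\binom{|V'|-1}{k-1}$. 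For this I would note that the hypothesis $\ep\le(1-\sqrt[k-1]{1/2})^{1/(1-\eta)}$ gives $\lambda:=|V'|/n\ge 1-\ep^{1-\eta}\ge\sqrt[k-1]{1/2}$, so $\lambda^{k-1}\ge 1/2$, and then the Bernoulli-inequality computation from the proof of Observation~\ref{obs:binomcalc} (legitimate since $|V'|=\lambda n\ge k^2$) bounds every factor of $\binom{n-1}{k-1}/\binom{|V'|-1}{k-1}=\prod_{i=0}^{k-2}\frac{n-1-i}{|V'|-1-i}$ by the largest one, $\frac{n-(k-1)}{|V'|-(k-1)}\le\frac{\sqrt[k-1]{k}}{\lambda}$, yielding the ratio at most $k/\lambda^{k-1}\le 2k$.

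The double counting in the first half is routine. The step I expect to require the most care is the passage to $G'$: one must account correctly for the edges through a good vertex that are lost when $V\setminus V'$ is deleted, and then verify the binomial comparison $\binom{n-1}{k-1}\le 2k\binom{|V'|-1}{k-1}$ under the precise hypothesis on $\ep$. This is the only place where the assumption $(1-\ep^{1-\eta})n\ge k^2$ and the quantitative bound on $\ep$ are used, and it is exactly what pins down the constants $1-2k\ep^\eta$ and $(1-\sqrt[k-1]{1/2})^{1/(1-\eta)}$ in the statement.
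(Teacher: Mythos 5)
Your proof is correct and follows the same core strategy as the paper's: a double count of non-edges against vertices together with a Markov-type estimate for the first part, and the binomial comparison from Observation~\ref{obs:binomcalc} for the ``in particular'' clause. The only difference is organizational — the paper defines a fixed low-degree set $V^*$ (via the total degree $d(v)$, so the bad set is independent of $U$) and then applies the first part with $U=V'$ directly, whereas you take $U=V$ and recover the $G[V']$-degree bound by a subtraction argument — but both routes produce the identical inequality and the identical appeal to Observation~\ref{obs:binomcalc}.
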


\begin{proof}
Let $V^*=\{v\in V: d(v)< \binom{n-1}{k-1}-\ep^{\eta}\binom{n-1}{k-1}\}$.  We have 
\[
|V^*|\frac{\ep^{\eta}}{k}\binom{n-1}{k-1}\leq e(\bar{G})\leq \ep\binom{n}{k},
\]
 which implies $|V^*|\leq \ep^{1-\eta}n$.  So let $U\subseteq V$ with $|U|= \lambda n\geq k^2$ and note that for all $v\in V\setminus V^*$ we have 
\begin{equation}\label{eq:dvU}
d(v, U) \geq \binom{|U|-1}{k-1}-\ep^{\eta}\binom{n-1}{k-1},% \geq \rbrac{1-\frac{k\ep^{\eta}}{\lambda^{k-1}}}\binom{|U|-1}{k-1},
\end{equation}
as required.

%where the last inequality follows from Observation \ref{obs:binomcalc}.

To see the final statement, let $V'=V\setminus V^*$ and note that $|V'|\geq (1-\ep^{1-\eta})n$ by the calculation of $|V^*|$ above.  
From \eqref{eq:dvU} (applied with $U=V'$) we have
\[
\delta(G[V']) \ge \binom{|V'|-1}{k-1}-\ep^{\eta}\binom{n-1}{k-1}.
\]
Now Observation~\ref{obs:binomcalc} with $\lambda  = 1-\ep^{1-\eta}$ yields
\[
\delta(G[V']) \geq \rbrac{1-\frac{k\ep^\eta}{(1-\ep^{1-\eta})^{k-1}}}\binom{|V'|-1}{k-1}\geq (1-2k\ep^\eta)\binom{|V'|-1}{k-1},
\]
where the last inequality holds since $1-\ep^{1-\eta}\geq \sqrt[k-1]{1/2}$.

\end{proof}

\begin{lemma}\label{lem:1core}
Let $k\geq 2$, $\ell\geq 1$, and $0<\ep<256^{-k}$.  Then there exists $n_0>0$, a sufficiently large integer, such that if $G=(V,E)$ is a $(k+\ell)$-colored $k$-uniform hypergraph on $n\ge n_0$ vertices with $e(G)>(1-\epsilon) \binom{n}{k}$, then $G$ contains a monochromatic 1-core on at least $(\frac{k}{k+\ell}-\sqrt{\ep})n$  vertices.
\end{lemma}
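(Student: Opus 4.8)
The plan is to prove the lemma by induction on the uniformity $k$, using the link-graph operation to drop from uniformity $k$ to $k-1$. For the base case $k=2$ (and arbitrary $\ell\ge 1$), discard the at most $\sqrt\ep\,n$ vertices of degree less than $(1-\sqrt\ep)(n-1)$; for each surviving vertex $v$, either some color class contains more than $\frac{2}{2+\ell}(n-1)$ of the edges at $v$ — in which case that color already covers more than $(\frac{2}{2+\ell}-\sqrt\ep)n$ vertices and, since $\frac{2}{2+\ell}\le 1$, we are done — or $v$ is incident to edges of at least two colors. If the first alternative never occurs, then every surviving vertex lies in the $1$-core of at least two color classes, so $\sum_{i=1}^{2+\ell}|\mathrm{cov}_i|\ge 2(n-\sqrt\ep\,n)$ (where $\mathrm{cov}_i$ is the set of vertices contained in some edge of color $i$), and pigeonhole produces a color covering at least $\frac{2n}{2+\ell}-\sqrt\ep\,n$ vertices.

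For the inductive step ($k\ge 3$) I would first use Observation~\ref{high degree} to pass to an induced subhypergraph on $(1-o(1))n$ vertices with minimum degree at least $(1-o(1))\binom{n-1}{k-1}$, charging the loss to a slightly larger value of $\ep$; then every vertex $v$ has a link graph $L(v)$ that is $(k-1)$-uniform on $n-1$ vertices, almost complete, and colored with at most $k+\ell=(k-1)+(\ell+1)$ colors. Applying the inductive hypothesis to $L(v)$ (with $k-1$ and $\ell+1$) produces a monochromatic $1$-core of $L(v)$ on at least $(\frac{k-1}{k+\ell}-o(1))(n-1)$ vertices, which through $v$ becomes a monochromatic $1$-core of $G$ on at least $(\frac{k-1}{k+\ell}-o(1))n$ vertices; in particular the largest monochromatic $1$-core $C_1=\mathrm{cov}_{c_1}$ satisfies $|C_1|\ge(\frac{k-1}{k+\ell}-o(1))n$, and we may assume $|C_1|<(\frac{k}{k+\ell}-\sqrt\ep)n$, for otherwise we are done.

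To recover the missing $\frac{1}{k+\ell}n$: for $v\notin C_1$ no edge at $v$ has color $c_1$, and $d(v,C_1)\ge(1-o(1))\binom{|C_1|}{k-1}$ by Observation~\ref{obs:binomcalc} since $|C_1|$ is a constant fraction of $n$, so the restricted link $L(v,C_1)$ is a $(k-1)$-uniform almost-complete hypergraph on $|C_1|$ vertices using at most $(k-1)+\ell$ colors. A second application of the inductive hypothesis, followed by an amplification step modeled on the proof of Theorem~\ref{kunik+1colors} (using that $C_1$ is a largest $1$-core, so a color whose $1$-core meets $C_1$ in a constant fraction must in fact meet $C_1$ in almost all of it), shows that each $v\notin C_1$ lies in the $1$-core of some color $c_v\ne c_1$ whose $1$-core contains almost all of $C_1$. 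There are more than $(\frac{\ell}{k+\ell}-o(1))n$ vertices outside $C_1$ and at most $k+\ell-1$ choices for $c_v$, so some color $c_2$ is used for at least $(\frac{\ell}{(k+\ell)(k+\ell-1)}-o(1))n$ of them; then $\mathrm{cov}_{c_2}$ contains almost all of $C_1$ together with these $\gtrsim\frac{\ell}{(k+\ell)(k+\ell-1)}n$ further vertices, and the identity $\frac{k-1}{k+\ell}+\frac{1}{k+\ell}=\frac{k}{k+\ell}$ (the reason the un-amplified estimate $\frac{k-1}{k+\ell-1}\cdot\frac{k-1}{k+\ell}+\frac{\ell}{(k+\ell)(k+\ell-1)}$ falls just short, so that ``almost all of $C_1$'' is indispensable) yields $|\mathrm{cov}_{c_2}|\ge(\frac{k}{k+\ell}-\sqrt\ep)n$, contradicting the maximality of $C_1$.

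The main obstacle is the error-term accounting. Each invocation of the inductive hypothesis must be made with a density deficiency small enough that the accumulated error stays below $\sqrt\ep$ and — crucially when $\ell=1$, where $\frac{k}{k+1}$ is close to $1$ — below $\frac{1}{k+\ell}$; meeting both demands for every $k$ is exactly what forces the quantitatively strong hypothesis $\ep<256^{-k}$, and one must choose the ``good vertex'' threshold and the subgraph from Observation~\ref{high degree} so that only $o(\sqrt\ep\,n)$ vertices are ever discarded while the surviving links remain dense enough. The small cases $k\in\{2,3\}$ need a little separate care (for $k=3$ the input about $2$-colored almost-complete graphs, used when a link sees only two colors, is classical). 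As an alternative to the inductive step one can instead run a direct counting argument in the spirit of the proof of Theorem~\ref{kunikcolors}: if every color covered fewer than $(\frac{k}{k+\ell}-\sqrt\ep)n$ vertices, the forbidden sets $W_i=V\setminus\mathrm{cov}_i$ would each have more than $(\frac{\ell}{k+\ell}+\sqrt\ep)n$ vertices and contain no edge of color $i$; since $\sum_i|W_i|$ exceeds $\ell n$ by $\Omega(\sqrt\ep\,n)$, more than $\sqrt\ep\,n$ vertices lie in at least $\ell+1$ of the sets $W_i$, and taking one such vertex together with one vertex from each of the remaining at most $k-1$ sets $W_j$ produces more than $\ep n^k$ ordered $k$-tuples that cannot be an edge of any color, contradicting $e(G)>(1-\ep)\binom{n}{k}$ — and here too the delicate point is calibrating the constants so that $\ep<256^{-k}$ suffices for all $k$.
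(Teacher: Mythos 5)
Your primary inductive route is a genuinely different approach from the paper's, and as written it has a gap. You apply the inductive hypothesis with parameters $(k-1,\ell+1)$ to the link graph of an \emph{arbitrary} vertex, which only yields a $1$-core of size about $\tfrac{k-1}{k+\ell}(n-1)$, and you then rely on an unspecified amplification step ``modeled on the proof of Theorem~\ref{kunik+1colors}''; but the paper proves Theorem~\ref{kunik+1colors} \emph{using} Lemma~\ref{lem:1core}, so this requires real care to avoid circularity, and in any case the amplification (the claim that a $1$-core meeting $C_1$ in a constant fraction must meet it in almost all of it) is not given and is itself a nontrivial lemma. The paper's actual proof avoids amplification entirely. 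After Observation~\ref{high degree} discards the few low-degree vertices, the same double count you sketch in your alternative produces more than $\sqrt\ep\,n$ vertices each missing at least $\ell+1$ of the $k+\ell$ colors, and then the one move your main proposal never makes: for such a high-degree vertex $v$, the link graph $L(v)$ is a $(k-1)$-uniform almost-complete hypergraph colored with \emph{at most $k-1$ colors}, so Theorem~\ref{kunikcolors} applied with uniformity $k-1$ gives a monochromatic component of $L(v)$ on $(1-o(1))(n-1)$ vertices, hence a monochromatic $1$-core of $G$ on $(1-o(1))n$ vertices, contradicting the assumption that every $1$-core is small. It is the drop in the number of colors, not induction on the fraction $\tfrac{k}{k+\ell}$, that makes the argument short.

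Your alternative sketch is, by contrast, essentially the paper's argument up to the final step: it runs the identical pigeonhole (that $\sum_i|W_i|$ exceeds $\ell n$ forces $>\sqrt\ep\,n$ vertices into $\ge\ell+1$ of the sets $W_i$) but then, instead of passing to $L(v)$ and invoking Theorem~\ref{kunikcolors}, counts $k$-tuples meeting every $W_i$ directly. That count is logically sound. It is, however, quantitatively looser than what you credit it with: it yields on the order of $\sqrt\ep\,\bigl(\tfrac{\ell}{k+\ell}\bigr)^{k-1}n^k$ forbidden ordered tuples, so the contradiction with $\ep n^k$ requires roughly $\ep\lesssim\bigl(\tfrac{\ell}{k+\ell}\bigr)^{2(k-1)}$, which for $\ell=1$ decays like $k^{-2k}$ and is not implied by $\ep<256^{-k}$ once $k$ is moderately large; the ``calibrate the constants'' issue you flag is therefore a genuine obstruction for this route (and, for that matter, the paper's own final inequality $(1-8\sqrt[2k]{\ep})(n-1)\ge(\tfrac{k}{k+\ell}-\sqrt\ep)n$ is also tight against $256^{-k}$ and deserves a second look). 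So your alternative captures the paper's key structural idea but not its final step, while the primary induction as stated has a real gap.
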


\begin{proof}
By~\eqref{obs:2:eq:1} in Observation \ref{high degree} (applied with  $\eta=1/2$, $\lambda = 1$, and $U=V$) at least $(1-\sqrt{\ep})n$ vertices have degree at least $(1-\sqrt{\ep})\binom{n-1}{k-1}$. Call these vertices~$V'$.

Suppose that the 1-core of each of the $k+\ell$ colors has order less than $(\frac{k}{k+\ell}-\sqrt{\ep})n$; that is, for each color $i\in [k+\ell]$, more than $\rbrac{\frac{\ell}{k+\ell}+\sqrt{\ep}}n$ vertices are not incident with an edge of color $i$. Let $X$ be the set of vertices that are not incident with at least $\ell+1$ colors. We claim that  $|X|\geq \frac{k+\ell}{k}\sqrt{\ep}n>\sqrt{\ep}n$.
Indeed, let $T\subseteq V\times [k+\ell]$ be the set of ordered pairs where $(v,t)\in T$ if the vertex $v$ is not incident with any edge of color $t$. Since for each $i\in[k+\ell]$, there are more than $\rbrac{\frac{\ell}{k+\ell}+\sqrt{\ep}}n$ vertices that are not incident with an edge of color $i$, we have that 
\begin{equation}\label{pigeonhole equation 1}
|T|\geq (k+\ell)\rbrac{\frac{\ell}{k+\ell}+\sqrt{\ep}}n=\ell n+\sqrt{\epsilon}(k+\ell)n.
\end{equation}
Now each vertex of $V\setminus X$ contributes no more than $\ell$ ordered pairs to the set $T$, and each vertex in $X$ can contribute no more than $k+\ell$, so we have that
\begin{equation}\label{pigeonhole equation 2}
|T|\leq (n-|X|)\ell+|X|(k+\ell)=\ell n+ k|X|.
\end{equation}
Combining \eqref{pigeonhole equation 1} and~\eqref{pigeonhole equation 2} gives us that $\ell n+k|X|\geq \ell n+\sqrt{\epsilon}(k+\ell)n$, which implies that $|X|\geq \frac{k+\ell}{k}\sqrt{\ep}n\geq \sqrt{\epsilon}n$, which proves the claim.

By the bounds on $|X|$ and $|V'|$ there exists $v\in V'\cap X$, and the link graph of $v$ is a  $(k-1)$-uniform hypergraph on $n-1$ vertices with at least $(1-\sqrt{\ep})\binom{n-1}{k-1}$ edges which is colored with at most $k-1$ colors and thus by Theorem \ref{kunikcolors}, there exists a monochromatic component on at least 
\[
(1-8\sqrt[2k]{\ep})(n-1)\geq \rbrac{\frac{k}{k+\ell}-\sqrt{\ep}}n
\]
 vertices, where the above inequality holds by the choice of $\ep$.  So, the 1-core of some color must have order at least $(\frac{k}{k+\ell}-\sqrt{\ep})n$.
\end{proof}

\begin{lemma}\label{lem:1core_AB}
Let $k\geq 2$ and $0< \ep\le 512^{-k}$. Then there exists a sufficiently large integer $n_0$ such that the following holds. Let $G$ be a hypergraph obtained from a $k$-uniform hypergraph $G'$ on $n\geq n_0$ vertices with $e(G')\geq (1-\ep)\binom{n}{k}$ by letting $\{A,B\}$ be a partition (chosen arbitrarily) of $V(G')$ with $|A|>\sqrt{\ep}n$, and deleting all edges which lie entirely inside $B$.  Then for any $k$-edge-coloring of $G$ there exists a monochromatic $1$-core in $G$ on at least 
\[
\min\cbrac{(1-8\sqrt[2k]{\ep})n, |A|-\sqrt{\ep}n+\rbrac{\frac{k-1}{k}-\frac{\sqrt{k}\sqrt[4k]{\ep}}{2^{k-1}}}|B|}
\]
vertices.  
\end{lemma}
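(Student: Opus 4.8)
The plan is to handle the two terms of the minimum separately. Throughout write $K_i$ for the colour‑$i$ monochromatic $1$-core, i.e.\ the set of vertices of $G$ incident with a colour‑$i$ edge (equivalently, the non‑isolated vertices of the colour‑$i$ subhypergraph, since deleting an isolated vertex of a hypergraph changes no edges). By Observation~\ref{high degree} (with $\eta=\tfrac12$, $U=V$), all but at most $\sqrt{\ep}\,n$ vertices $v$ have $d_{G'}(v)\ge(1-\sqrt{\ep})\binom{n-1}{k-1}$; call this set $V'$, so $|V'|\ge(1-\sqrt{\ep})n$ and in particular $|A\cap V'|\ge|A|-\sqrt{\ep}\,n>0$. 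For $v\in A$, every edge of $G'$ through $v$ meets $A$ and hence survives in $G$, so $L_G(v)=L_{G'}(v)$ is a $k$-coloured, $(k-1)$-uniform hypergraph on $n-1$ vertices with at least $(1-\sqrt{\ep})\binom{n-1}{k-1}$ edges.

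\emph{First term.} Fix $v\in A\cap V'$. If $v$ is incident with at most $k-1$ colours, then $L_G(v)$ uses at most $k-1$ colours and is nearly complete, so Theorem~\ref{kunikcolors} applied to $L_G(v)$ (with $k-1$ in place of $k$ and $\sqrt{\ep}$ in place of $\ep$; for $k=3$ use the elementary fact that a nearly complete $2$-coloured graph has a monochromatic component spanning almost all vertices) gives a monochromatic component of $L_G(v)$ on at least $(1-8\,{}^{k-1}\!\sqrt{\sqrt{\ep}}\,)(n-1)$ vertices; $K_i$ then contains all of them together with $v$, so some $|K_i|\ge(1-8\,{}^{k-1}\!\sqrt{\sqrt{\ep}}\,)(n-1)\ge(1-8\,\sqrt[2k]{\ep}\,)n$ for $n$ large, which is the first term. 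The same bound follows if $|B|\le\tfrac12\sqrt[2k]{\ep}\,n$: then $e(G)\ge(1-\ep)\binom{n}{k}-\binom{|B|}{k}\ge(1-\sqrt{\ep})\binom{n}{k}$, so Theorem~\ref{kunikcolors} applies to $G$ directly. Hence we may assume $|B|$ is not that small and that every vertex of $A\cap V'$ is incident with all $k$ colours; the latter gives $|K_i\cap A|\ge|A|-\sqrt{\ep}\,n$ for every $i$.

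\emph{Second term.} It now suffices to find a colour $i$ with $|K_i\cap B|\ge(\tfrac{k-1}{k}-c)|B|$, where $c=\tfrac{\sqrt{k}\,\sqrt[4k]{\ep}}{2^{k-1}}$, for then $|K_i|\ge|K_i\cap A|+|K_i\cap B|\ge|A|-\sqrt{\ep}\,n+(\tfrac{k-1}{k}-c)|B|$. Pick $a\in A\cap V'$ and restrict its link graph to $B$: since $a\in A$, the edges of $L_G(a)[B]$ are exactly the $(k-1)$-subsets $S\subseteq B$ with $S\cup\{a\}\in E(G')$, so $L_G(a)[B]$ is a $k$-coloured $(k-1)$-uniform hypergraph on $|B|$ vertices missing at most $\sqrt{\ep}\binom{n-1}{k-1}$ edges; because $|B|$ is not too small, this is a small fraction of $\binom{|B|}{k-1}$, so $L_G(a)[B]$ is nearly complete. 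A colour‑$i$ edge of $L_G(a)[B]$ yields a colour‑$i$ edge of $G$ through $a$, so the colour‑$i$ $1$-core of $L_G(a)[B]$ lies inside $K_i\cap B$. Applying Lemma~\ref{lem:1core} to $L_G(a)[B]$ (with $k-1$ for $k$ and $\ell=1$; or Theorem~\ref{kunikcolors} if $L_G(a)[B]$ uses only $\le k-1$ colours) produces a monochromatic $1$-core of $L_G(a)[B]$, hence a colour $i$ with $|K_i\cap B|\ge(\tfrac{k-1}{k}-O(\sqrt[4k]{\ep}\,))|B|$, which one checks meets the required bound.

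The main obstacle is quantitative. Restricting $L_G(a)$ to $B$ inflates the density defect by a factor of order $(n/|B|)^{k-1}$, so the threshold separating "$|B|$ small'' from "$|B|$ large'' must be calibrated so that simultaneously: when $|B|$ is below it, $G$ (or a full link graph) is complete enough for Theorem~\ref{kunikcolors} to deliver the first term; and when $|B|$ is above it, the defect of $L_G(a)[B]$ is still small enough for Lemma~\ref{lem:1core} to deliver the second term with exactly the constant $c=\tfrac{\sqrt{k}\,\sqrt[4k]{\ep}}{2^{k-1}}$. Pinning down this threshold and verifying that all the nested exponents ($\sqrt{\ep}$, $\sqrt[2k]{\ep}$, $\sqrt[4k]{\ep}$) and the factor $2^{k-1}$ fit under the hypothesis $\ep<512^{-k}$ is the delicate bookkeeping. (An alternative finish for the second term, bypassing Lemma~\ref{lem:1core}: with $W_i=B\setminus K_i$, every edge of $G$ avoids some $W_i$, so every $k$-set meeting $A$ and meeting all of $W_1,\dots,W_k$ is a non‑edge of $G'$; if every $|W_i|>(\tfrac1k+c)|B|$ then $\sum_i|W_i|>|B|$ forces the $W_i$ to overlap, and counting such $k$-sets — one vertex of $A$, one vertex of a largest pairwise intersection $W_{i_1}\cap W_{i_2}$, one vertex of each remaining $W_i$ — yields more than $\ep\binom{n}{k}$ of them, a contradiction.)
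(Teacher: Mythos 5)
Your proof follows the paper's approach (take a high-degree vertex $a\in A$; one case gives the first term via Theorem~\ref{kunikcolors} applied to $L_G(a)$, the other gives the second term via Lemma~\ref{lem:1core} applied to $L_G(a)$ restricted to $B$). But the calibration you flag as ``delicate bookkeeping'' is not a cosmetic issue: the lower bound $|B|>\tfrac12\sqrt[2k]{\ep}\,n$ you arrange is wrong, and with it the proof does not deliver the stated constant. With that bound, the density defect of $L_G(a)[B]$ is roughly $k\sqrt{\ep}\,(n/|B|)^{k-1}\approx k\,2^{k-1}\sqrt[2k]{\ep}$, so Lemma~\ref{lem:1core} (with $k-1$ for $k$, $\ell=1$) yields a $1$-core of size only $\bigl(\tfrac{k-1}{k}-\sqrt{k}\,2^{(k-1)/2}\sqrt[4k]{\ep}\bigr)|B|$, missing the required error term $\tfrac{\sqrt{k}\sqrt[4k]{\ep}}{2^{k-1}}$ by a factor of $2^{3(k-1)/2}$. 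The alternative first-term route you propose (``$|B|$ small $\Rightarrow$ $G$ itself nearly complete'') is what forces this threshold to be so small, and it cannot be pushed to $4\sqrt[2k]{\ep}\,n$, since then $\binom{|B|}{k}$ could exceed $\sqrt{\ep}\binom{n}{k}$.

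The correct lower bound on $|B|$ does not come from a density estimate on $G$ at all. Once you are in the case where every vertex of $A'=A\cap V'$ lies in the $1$-core of every color (so $A'\subseteq K_i$ for all $i$), you should observe that you are \emph{already done} if $|A'|\ge(1-8\sqrt[2k]{\ep})n$, because any single $K_i\supseteq A'$ then attains the first term of the minimum. Hence you may additionally assume $|A'|<(1-8\sqrt[2k]{\ep})n$, which, combined with $|A'|\ge|A|-\sqrt{\ep}\,n$, gives $|B|=n-|A|\ge n-|A'|-\sqrt{\ep}\,n\ge 4\sqrt[2k]{\ep}\,n$. This larger bound makes the density defect of $L_G(a)[B]$ at most $\tfrac{k\sqrt[2k]{\ep}}{4^{k-1}}$, and Lemma~\ref{lem:1core} then produces exactly $\bigl(\tfrac{k-1}{k}-\tfrac{\sqrt{k}\sqrt[4k]{\ep}}{2^{k-1}}\bigr)|B|$. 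That observation is the missing step, not bookkeeping. (Your sketched counting alternative for the second term is a genuinely different idea, but it too is left without the quantitative verification needed to hit the stated constant.)
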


\begin{proof}
By~\eqref{obs:2:eq:1} in Observation \ref{high degree} (applied to $G'$ with parameters $\eta=1/2$, $\lambda = 1$, and $U=V$) we get that all but at most $\sqrt{\ep}n$ vertices in $G'$ have degree at least $(1-\sqrt{\ep})\binom{n-1}{k-1}$.  Let $A'$ be the vertices in $A$ which have degree at least $(1-\sqrt{\ep})\binom{n-1}{k-1}$ in $G$ and note that since the degrees of the  vertices in $A$ are the same in $G$ as in $G'$, we have that $|A'|\geq |A|-\sqrt{\ep}n>0$.  First suppose that there exists a color $i\in [k]$ and a vertex $v\in A'$ such that $v$ is not contained in the 1-core of color $i$.  Then the link graph of $v$ in $G$ is a $(k-1)$-colored $(k-1)$-graph on $n-1$ vertices with at least $(1-\sqrt{\ep})\binom{n-1}{k-1}$ edges.  
By Theorem \ref{kunikcolors}, we have a monochromatic component in the link graph of $v$ of order at least $(1-8\sqrt[2k]{\ep})(n-1)$ which together with $v$ gives a monochromatic component (and so a 1-core) in $G$ of order $(1-8\sqrt[2k]{\ep})(n-1) + 1 \geq (1-8\sqrt[2k]{\ep})n$.
Otherwise every vertex in $A'$ is contained in the 1-core of every color.  Now if $|A'|\geq (1-8\sqrt[2k]{\ep})n$, then we have a 1-core of the desired size, so suppose $|A'|< (1-8\sqrt[2k]{\ep})n$, which implies 
$$|B|= n-|A|\geq n-|A'|-\sqrt{\ep}n\geq  4\sqrt[2k]{\ep}n.$$
Let $v\in A'$ and consider the link graph of $v$ restricted to $B$, which is a $(k-1)$-uniform hypergraph on $|B|$ vertices colored with $k$ colors which, by Observation \ref{obs:binomcalc} (applied with $U=B$, $\lambda = |B|/n \ge 4\sqrt[2k]{\ep}$), has at least
\begin{align*}
\binom{|B|}{k-1} - \sqrt{\ep} \binom{n-1}{k-1} &\ge
\rbrac{1-\frac{k\sqrt{\ep}}{\lambda^{k-1}}}\binom{|B|}{k-1}\\
&\geq \rbrac{1-\frac{k\sqrt{\ep}}{(4\sqrt[2k]{\ep})^{k-1}}}\binom{|B|}{k-1}= \rbrac{1-\frac{k\sqrt[2k]{\ep}}{4^{k-1}}}\binom{|B|}{k-1}
\end{align*}
edges. Now by Lemma \ref{lem:1core} (applied to the link graph of $v$ with $k$ replaced by $k-1$ and $\ell=1$), there is a monochromatic 1-core of color $i$ which contains at least $(\frac{k-1}{k}-\frac{\sqrt{k}\sqrt[4k]{\ep}}{2^{k-1}})|B|$ vertices of $B$, and since every vertex in $A'$ is in the 1-core of color $i$, the total size of the 1-core of color $i$ is at least 
\[
|A'|+\left(\frac{k-1}{k}-\frac{\sqrt{k}\sqrt[4k]{\ep}}{2^{k-1}}\right)|B|\geq |A|-\sqrt{\ep}n+\left(\frac{k-1}{k}-\frac{\sqrt{k}\sqrt[4k]{\ep}}{2^{k-1}}\right)|B|.
\]
\end{proof}

\section{Conclusion}\label{sec:conc}

The most satisfactory results of this paper are for $k$-colored or $(k+1)$-colored $k$-uniform hypergraphs. In order to obtain them, we extended Theorem~\ref{gthm1} to almost complete hypergraphs. However, for complete hypergraphs more is known.

\begin{theorem}[F\"uredi and Gy\'arf\'as~\cite{FG1991}]\label{fgthm}
Let $k,r\geq 2$ and let $q$ be the smallest integer such that $r\leq q^{k-1}+q^{k-2}+\dots+q+1$.  Then $\mc_r(K^k_n)\geq \frac{n}{q}$. 
This is sharp when $q^k$ divides $n$, $r=q^{k-1}+q^{k-2}+\dots+q+1$, and an affine space of dimension $k$ and order $q$ exists.
\end{theorem}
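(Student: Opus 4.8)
The plan is to use Füredi's method: first translate the statement into one about partitions covering $k$-element sets, then extract the sharp threshold from a fractional relaxation, using the affine configuration as the extremal object for both the lower bound and its optimality.

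\emph{Step 1 (reduction to a covering statement about partitions).} Given an $r$-coloring $\chi$ of $K^k_n$, for each color $i\in[r]$ let $\mathcal{P}_i$ be the partition of $[n]$ whose blocks are the vertex sets of the components of color $i$ (a vertex incident to no edge of color $i$ forms its own singleton block). The unique edge on any $S\in\binom{[n]}{k}$ receives some color $i$, and then all of $S$ lies inside a single block of $\mathcal{P}_i$; thus $\mathcal{P}_1,\dots,\mathcal{P}_r$ \emph{cover all $k$-sets}, i.e.\ every $S\in\binom{[n]}{k}$ is contained in a block of some $\mathcal{P}_i$. Conversely $\mc(K^k_n,\chi)$ is exactly the size of the largest block occurring in any $\mathcal{P}_i$. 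So it suffices to prove the purely combinatorial statement: if $r$ partitions of $[n]$ cover all $k$-sets, then some block has at least $n/q$ elements, where $q$ is the least integer with $r\le 1+q+\dots+q^{k-1}$.

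\emph{Step 2 (the fractional argument, and the main obstacle).} Suppose for contradiction every block of every $\mathcal{P}_i$ has fewer than $n/q$ elements. The crude estimate — every $k$-set is covered, so $\binom{n}{k}\le\sum_i\sum_{B\in\mathcal{P}_i}\binom{|B|}{k}$, and by convexity the right-hand side is at most $(1+o(1))\,r\binom{n/q}{k}$ — only gives $r\ge(1-o(1))q^{k-1}$, which is weaker than needed by roughly the factor $\tfrac{q}{q-1}$: it cannot see the lower-order part $q^{k-2}+\dots+1$ of the threshold. To recover the exact bound one applies Füredi's fractional-cover method: pass to the LP relaxation of ``$r$ partitions with small blocks covering all $k$-sets'' (fractional weights on blocks, with the partition constraint that each vertex lies in one block per color and the smallness constraint that each block has weight $<1/q$), invoke LP duality, and observe that the extremal fractional solution is precisely the affine configuration of Step 3, which pins the optimum at exactly the stated $q$. \textbf{This is the crux and the main obstacle:} obtaining the exact threshold $1+q+\dots+q^{k-1}$ rather than a bare power of $q$, since ordinary double counting loses the geometric-series correction and one must exploit the structure of the optimal fractional (equivalently, affine/projective) configuration. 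Alternative routes worth trying are (a) a recursion in the style of Gyárfás's graph proof — if some color spans $K^k_n$ we are done, otherwise coarsen that color's components to split $[n]$ into a few parts across which the color is absent and invoke a multipartite analogue with one fewer color — where the difficulty is exactly proving the multipartite lemma with the sharp constant; and (b) induction on $k$ through the link of a vertex (in the link of $v$, color $i$ is confined to the $\mathcal{P}_i$-block through $v$), which yields a clean recursion but only a weak bound unless the inductive hypothesis is strengthened to control the whole profile of block sizes across all $r$ colors at once.

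\emph{Step 3 (optimality).} Let $q$ be such that an affine space $\mathrm{AG}(k,q)$ of order $q$ exists (e.g.\ a prime power) and $q^k\mid n$, and take the vertex set to be the points of $\mathrm{AG}(k,q)$ with each point blown up into $n/q^k$ vertices. The parallel classes of affine hyperplanes are indexed by the points of $\mathrm{PG}(k-1,q)$, of which there are exactly $1+q+\dots+q^{k-1}=r$; use these as colors. Any $k$ points span an affine flat of dimension at most $k-1$ and hence lie in some hyperplane, so color a $k$-set by the direction of a hyperplane containing the projection of its $k$ vertices to $\mathrm{AG}(k,q)$. Then no edge of color $i$ crosses between two distinct hyperplanes of direction $i$, while inside a hyperplane $H$ of direction $i$ the $k$-sets whose projection spans $H$ are forced to have color $i$ and already make the blow-up of $H$ connected; hence the components of color $i$ are exactly the $q$ blown-up hyperplanes of direction $i$, each of size $q^{k-1}\cdot(n/q^k)=n/q$. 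Therefore $\mc_r(K^k_n)=n/q$, so the lower bound is best possible under the stated hypotheses.
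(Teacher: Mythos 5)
The paper does not prove Theorem~\ref{fgthm}; it is quoted from F\"uredi and Gy\'arf\'as~\cite{FG1991} as a known result (and in Section~\ref{sec:conc} the authors remark that it rests on F\"uredi's fractional transversal method, which they say does not extend easily beyond complete hypergraphs), so there is no in-paper proof to compare against.

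Judged on its own, your proposal has a genuine gap at its center. Step~1, the reformulation as ``$r$ partitions of $[n]$ whose blocks cover every $k$-set,'' is correct and is the standard framing; Step~3, the affine-space blow-up showing sharpness, is also correct (a color-$i$ edge cannot cross between parallel hyperplanes of direction $i$, while the $k$-sets whose projections span a hyperplane $H$ are forced to direction $i$ and connect the blow-up of $H$). But Step~2 --- the lower bound $\mc_r(K^k_n)\ge n/q$, which is the actual content of the theorem --- is never established. You correctly diagnose that the convexity count $\binom{n}{k}\le\sum_i\sum_{B\in\mathcal{P}_i}\binom{|B|}{k}$ yields only $r\gtrsim q^{k-1}$ and loses the geometric-series correction, and you name F\"uredi's fractional-cover/LP-duality method plus two alternative recursions as candidates, but none of them is carried out; you flag this yourself as ``the crux and the main obstacle.'' Naming the missing lemma is not the same as supplying it: writing down the LP whose dual certifies $r>1+q+\cdots+q^{k-1}$ whenever all blocks are smaller than $n/q$, or proving the sharp multipartite recursion in your route~(a), is exactly where the theorem's difficulty lives, and as written the argument proves only the weaker $r\gtrsim q^{k-1}$ bound.
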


%\begin{proof}
%The proof of this Theorem splits into two cases.  Either the coloring has the property that every pair of vertices is contained in at least $q^{\langle k-2\rangle}:=q^{k-2}+q^{k-3}+\dots+q+1$ components in which case they apply Lemma \ref{fglem} to get a component of size $$\frac{q^{\langle k-2\rangle}n}{r-1}\geq \frac{q^{\langle k-2\rangle}n}{q^{\langle k-1\rangle}-1}=\frac{n}{q}.$$  
%Otherwise there is some pair of vertices which is contained in fewer than $q^{\langle k-2\rangle}$ components.  Choose $a$ as large as possible so that there is a set $A$, of $a$ vertices which is contained in fewer than $q^{\langle k-a\rangle}$ components.  Clearly $a\geq 2$ and since $H$ is a complete graph, $a\leq k-1$.  Note that every other vertex in the hypergraph is contained in at least $q^{\langle k-a-1\rangle}$ components with $A$ out of $q^{\langle k-a\rangle}-1$ possible components.  So on average each of the components containing the set $A$ has at least $$\frac{q^{\langle k-a-1\rangle} n}{q^{\langle k-a\rangle}-1}=\frac{n}{q},$$ vertices, and thus there is a component of size at least $n/q$.
%\end{proof}

%\begin{lemma}[F\"uredi, Gy\'arf\'as 1991]\label{fglem}
%Let $r\geq 2$ and $s\geq 1$ and let $H$ be a hypergraph on $n$ vertices.  Suppose the edges of  $H$ have been colored with $r$ colors in such a way that for all $\{x,y\}\in \binom{V(G)}{2}$, there are at least $s$ components which contain both $x$ and $y$.  Then there is a monochromatic component of size at least $\frac{sn}{r-1}$.
%\end{lemma}

\begin{theorem}[Gy\'arf\'as and Haxell~\cite{GH2009}]\label{ghthm}
$\mc_{5}(K^3_n)\geq 5n/7$ and $\mc_{6}(K^3_n)\geq 2n/3$.  Furthermore these are tight when $n$ is divisible by $7$ and $6$ respectively.  
\end{theorem}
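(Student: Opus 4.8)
The plan is to prove both lower bounds by a single ``peeling'' strategy, which for $k=3$ rests on one elementary observation: if $C$ is the vertex set of a monochromatic component of color $i$, then any triple meeting both $C$ and $V\setminus C$ gets a color different from $i$ — otherwise its three vertices would all lie in one color-$i$ component, which would have to be $C$. Hence the triples straddling the boundary of a large monochromatic component effectively use only $r-1$ colors. I would set up the standard framework: for an $r$-coloring of $K^3_n$, and each color $i$, let $P_i$ be the partition of the vertex set into vertex sets of color-$i$ components (a vertex lying in no color-$i$ edge forming a singleton). Every triple lies in a block of some $P_i$, and the goal is to produce a block of some $P_i$ with at least $\tfrac57 n$ vertices when $r=5$, or $\tfrac23 n$ vertices when $r=6$.

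Assume for contradiction that every block of every $P_i$ has fewer than $cn$ vertices, with $c = 5/7$ for $r = 5$ and $c = 2/3$ for $r = 6$. Let $C_1$ be a largest block, say of color $1$; then $|C_1| = c_1 n$ with $c_1 < c$. Now iterate: on the triples meeting $C_1$ only the $r-1$ colors $\{2,\dots,r\}$ appear, while triples inside $V\setminus C_1$ may still carry color $1$; choose a largest block $C_2$ relevant to this restricted picture, peel it off, and continue, at each step recording how much of $\binom{V}{3}$ has been accounted for and how many colors remain active on the unresolved part. Since the color-$i$ components form a partition, blocks picked at different steps interact rigidly; I would track a weight function — morally, counting triples of blocks against colors — that must remain feasible throughout. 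The arithmetic has to be tuned so that infeasibility kicks in exactly once $c$ exceeds $5/7$ (resp.\ $2/3$), and this tuning, rather than any single clever step, is the substance of the proof.

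The main obstacle is precisely this extremal bookkeeping. The naive approach — fix a vertex $v$, view its link as an $r$-colored $K_{n-1}$, and apply Gy\'arf\'as's graph theorem — gives a monochromatic component through $v$ of order only about $n/(r-2)$, far below $\tfrac57 n$ or $\tfrac23 n$, so one must exploit several monochromatic components at once and the pairwise disjointness of same-color components. I expect the endgame to collapse to a covering-design statement: such a coloring forces an assignment of the triples of a set of at most $7$ (for $r=5$), resp.\ at most $6$ (for $r=6$), ``super-parts'' to $5$, resp.\ $6$, colors in which every color class spans at most $5$, resp.\ $4$, super-parts; a pigeonhole/double-counting argument on how the $\binom{7}{3}=35$, resp.\ $\binom{6}{3}=20$, triples distribute among the colors then has to yield a contradiction once the super-parts are forced to have equal size. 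The Fano plane $PG(2,2)$ (for $r=5$) and an explicit $6$-point configuration related to the affine plane of order $2$ (for $r=6$) are the rigid structures behind this.

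For tightness I would exhibit the matching colorings directly. Partition $[n]$ into $7$ (resp.\ $6$) equal parts indexed by the points of the Fano plane (resp.\ the chosen $6$-point configuration), and color every triple of vertices lying in three distinct parts by a rule under which each color class is connected on exactly $5$ of the $7$ parts (resp.\ $4$ of the $6$ parts); triples contained in one or two parts are colored with colors already present on those parts so that no component grows. One then checks that every monochromatic component has exactly $\tfrac57 n$ vertices when $7\mid n$, and exactly $4\cdot\tfrac n6 = \tfrac23 n$ vertices when $6\mid n$, matching the lower bounds.
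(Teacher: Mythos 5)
The paper does not prove Theorem~\ref{ghthm}; it is quoted from Gy\'arf\'as and Haxell~\cite{GH2009} in the concluding section, and the only comment offered on its proof is that ``these results follow by applying F\"uredi's fractional transversal method which does not seem to extend to non-complete graphs in a straightforward manner.'' So there is no in-paper proof to compare against, and the relevant comparison is with the cited argument, which is an LP-duality argument: one forms the hypergraph whose edges are the vertex sets of all monochromatic components (a union of $r$ partitions of $[n]$ that covers every triple), bounds its fractional transversal number, and reads off a large edge.

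Your proposal takes a genuinely different route --- an iterative ``peeling'' of large monochromatic components combined with a double-count of triples --- but it is not a proof. You yourself flag the gap: the ``extremal bookkeeping'' that you say ``rather than any single clever step, is the substance of the proof'' is never carried out. You do not specify the weight function you intend to track, do not show the peeling terminates after boundedly many steps with super-parts of controlled size, and do not prove the claimed endgame reduction to a covering-design statement on $\le 7$ (resp.\ $\le 6$) super-parts. As written, nothing forces the components encountered during peeling to nest or align into a bounded number of atoms, nor is it established that the surviving colors on the unresolved part behave like a coloring of a small complete $3$-graph. This is precisely the hard part of the theorem, and the proposal replaces it with an expectation. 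The tightness sketch is closer to correct in spirit --- for $r=6$ one can take six parts and forbid each color on the pair of parts given by two disjoint triangles in $K_6$, and for $r=5$ a Fano-plane-based assignment works --- but even there the rule for triples lying inside one or two parts and the verification that every monochromatic component has the exact claimed size are omitted. To turn this into a real proof you would either need to supply the explicit potential/weight argument and show it forces the super-part structure, or switch to the fractional transversal framework: bound $\tau^*$ of the component hypergraph by exhibiting a suitable fractional transversal and invoke LP duality, which is what~\cite{GH2009} does.
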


\noindent
It would be interesting to extend these results to the nearly complete setting, from which the random analogs would then follow by applying Theorem \ref{thm:main}.  It is worth noting that these results follow by applying F\"uredi's fractional transversal method which does not seem to extend to non-complete graphs in a straightforward manner.

\medskip

\textbf{Acknowledgment} We are grateful to all referees for their detailed comments on an earlier version of this paper.

%\bibliographystyle{amsplain}
%\bibliography{refs}

\providecommand{\bysame}{\leavevmode\hbox to3em{\hrulefill}\thinspace}
\providecommand{\MR}{\relax\ifhmode\unskip\space\fi MR }
% \MRhref is called by the amsart/book/proc definition of \MR.
\providecommand{\MRhref}[2]{%
  \href{http://www.ams.org/mathscinet-getitem?mr=#1}{#2}
}
\providecommand{\href}[2]{#2}

\end{document}